\documentclass[hidelinks,onefignum,onetabnum]{siamart251216}

\usepackage{lipsum}
\usepackage{amsfonts}
\usepackage{graphicx}
\usepackage{epstopdf}
\usepackage{algorithmic}
\ifpdf
  \DeclareGraphicsExtensions{.eps,.pdf,.png,.jpg}
\else
  \DeclareGraphicsExtensions{.eps}
\fi

\newsiamremark{remark}{Remark}
\newsiamremark{assumption}{Assumption}
\newsiamremark{problem}{Problem}
\newsiamremark{hypothesis}{Hypothesis}
\crefname{hypothesis}{Hypothesis}{Hypotheses}
\newsiamthm{claim}{Claim}
\newsiamremark{fact}{Fact}
\crefname{fact}{Fact}{Facts}

\headers{Regularized Newton-SLRA}{D. Zheng and S. Chen}

\title{A Regularized Newton-Type Method for Manifold--Affine Intersection
Problems under Intrinsic Transversality
}

\author{
Dengyu Zheng\thanks{
School of Mathematical Sciences, Key Laboratory of the Ministry of Education
for Mathematical Foundations and Applications of Digital Technology,
University of Science and Technology of China, Hefei, Anhui, China
(\email{zq\_renius919@mail.ustc.edu.cn}, \email{shxchen@ustc.edu.cn}).
}
\and
Shixiang Chen\footnotemark[1]
}

\usepackage{amsopn}
\DeclareMathOperator{\diag}{diag}

\usepackage{enumitem}

\usepackage{xcolor}
\newcommand{\blue}[1]{#1}
\colorlet{BLUE}{blue} 
\title{\blue{A Regularized Newton-Type Method for Manifold--Affine Intersection
Problems under Intrinsic Transversality}}

\usepackage{booktabs}
\usepackage{array}

\ifpdf
\hypersetup{
  pdftitle={A Regularized Newton-Type Method for Manifold-Affine Intersection Problems under Weak Intersection Conditions},
  pdfauthor={Dengyu Zheng, and Shixiang Chen}
}
\fi

\begin{document}

\maketitle

\begin{abstract}
We propose Regularized Newton-SLRA (RN-SLRA), a regularized Newton-type method
for local manifold--affine intersection problems motivated by structured
low-rank approximation. Classical Newton-SLRA achieves fast local convergence
under transversality, but its tangent-space intersection step may become
ill-defined, singular, or severely ill-conditioned when transversality fails.
RN-SLRA overcomes this difficulty by replacing the exact tangent-space
intersection step with a regularized quadratic subproblem over the affine space.
\blue{Under intrinsic transversality, RN-SLRA with regularization parameter
$\mu_k=c r_k^\rho$ converges locally with Q-order at least $1+\rho$ for
$0<\rho\le1$, including quadratically for $\rho=1$, and linearly for $\rho=0$. We also analyze an inexact variant
based on $\sigma$-quasioptimal manifold projections.  It retains the
$1+\rho$ order for $0<\rho\leq1$, and converges linearly for fixed
regularization when $q\sigma<1$, where $q$ is the local one-step
alternating-projection factor.  Experiments support the predicted quadratic behavior on an
analytically clean but nontransversal SLRA instance, where the classical Newton
tangent system is severely ill-conditioned, and show reduced projection costs
for the inexact method on large-scale Hankel problems.}

\end{abstract}

\begin{keywords}
Manifold--affine intersection problems, structured low-rank approximation, regularization, inexact projection, (intrinsic) transversality, local convergence
\end{keywords}

\begin{MSCcodes}
  65Y20, 65F55, 90C30
\end{MSCcodes}

\section{Introduction}

Structured low-rank approximation (SLRA) seeks to approximate a structured
matrix \(M\) by a matrix \(M_r\) with the same structure and prescribed rank.
Typical structures include Hankel, Toeplitz, Sylvester, and more general affine
matrix structures
\cite{ChuFunderlicPlemmons2003,MARKOVSKY2008}. In contrast to classical low-rank
approximation, which admits an explicit solution by the SVD, SLRA must enforce
the affine structure and the rank constraint simultaneously. This coupling makes
the problem substantially more difficult. SLRA arises in signal processing,
system identification, and computer algebra
\cite{MARKOVSKY2008,markovsky2015behavioral,kaltofen2007sylvester}; for example,
low-rank Hankel approximation is related to exponential data fitting, frequency
estimation, and denoising, while Sylvester-structured low-rank approximation is
important in approximate polynomial GCD computations
\cite{MARKOVSKY2008,markovsky2018sparse,kaltofen2007sylvester}.

Geometrically, an affine matrix structure is represented by an affine subspace,
whereas the rank constraint is represented locally by a smooth fixed-rank
manifold. Thus SLRA naturally leads to a manifold--affine intersection problem.
Let \(\mathcal{M}_{p,q}(\mathbb{R})\) denote the space of real \(p\times q\)
matrices, equipped with the Frobenius inner product
\(
    \langle M_1,M_2\rangle=\operatorname{tr}(M_1^\top M_2)
\)
and norm
\(
    \|M\|=\sqrt{\operatorname{tr}(M^\top M)}.
\)
Denote by \(\mathcal{D}_r\) the subset of
\(\mathcal{M}_{p,q}(\mathbb{R})\) consisting of matrices of rank \(r\).
All local statements involving \(\mathcal D_r\) are understood in a neighborhood
of a rank-\(r\) point \(x^*\in E\cap\mathcal D_r\) with
\(\sigma_r(x^*)>0\); in such a neighborhood, \(\mathcal D_r\) is a smooth
fixed-rank manifold and the metric projection onto \(\mathcal D_r\) is locally
single-valued. The local SLRA problem considered in this paper can be formulated
as follows.

\begin{problem}[SLRA]
    \label{prob:SLRA}
    Consider an affine subspace \(E\subset \mathcal{M}_{p,q}(\mathbb{R})\),
    a matrix \(M\in E\), and an integer
    \(r\in \{0,1,\ldots,\min(p,q)\}\).
    Find a matrix \(M_r\in E\cap\mathcal D_r\) close to \(M\).
\end{problem}

Computing a metric projection onto \(E\cap\mathcal D_r\) is generally difficult.
In practice, one usually seeks a nearby feasible point in \(E\cap\mathcal D_r\),
which can be regarded as a local approximation to the projection onto the
manifold--affine intersection. Many algorithms have been proposed for this
purpose. Among them, the Cadzow algorithm \cite{Cadzow1988} can be viewed as an
alternating-projection method between the affine structure set and the
fixed-rank manifold. It is simple and robust, and converges locally linearly
under separability-type intersection conditions
\cite{Lewis2008,Drusvyatskiy2019}. In contrast, Newton-SLRA \cite{Schost2013} and APHL~\cite{xiao2025quadraticallyconvergentalternatingprojection} are   Newton-type variants with local
quadratic convergence under transversality.  Other approaches include structured total least squares methods
\cite{RosenParkGlick1998,ParkZhangRosen1999}, Riemannian optimization
\cite{AbsilAmodeiMeyer2014,Vandereycken2013}, alternating minimization
\cite{JainNetrapalliSanghavi2013}, variable projection
\cite{MarkovskyUsevich2014SoftwareSLRA}, gradient systems
\cite{FazziGuglielmiMarkovsky2021GradientSystem}, and algebraic methods
\cite{OttavianiSpaenlehauerSturmfels2014}.

The quadratic convergence of Newton-SLRA, however, relies on a restrictive
geometric requirement: in the SLRA setting, the affine subspace \(E\) and the
fixed-rank manifold \(\mathcal D_r\) must intersect transversally. When
transversality fails, the tangent-space intersection in the Newton-SLRA update
may be empty, and the associated linear system may become singular or severely
ill-conditioned. Thus, although alternating projections remain applicable under
weaker intersection conditions, \blue{the standard local quadratic-convergence
guarantee for Newton-SLRA is no longer available. This motivates a regularized Newton-type method that is
well defined under weaker geometry and retains higher-order local convergence
already under intrinsic transversality, equivalently clean intersection in the
present setting.}

To address this issue, we propose Regularized Newton-SLRA (RN-SLRA) for local
manifold--affine intersection problems. The method replaces the potentially
ill-defined tangent-space intersection step in Newton-SLRA by a regularized
subproblem over the affine space. The regularization parameter is chosen as
\[
    \mu_k=c\,r_k^\rho,\qquad
    r_k:=d_M(x_k)=\|x_k-P_M(x_k)\|,\qquad
    c>0,\quad \rho\in[0,1].
\]
This residual-dependent choice makes \blue{the regularization adaptive}: it
stabilizes the step when the manifold residual is not small, and for
\(\rho>0\) it becomes asymptotically weaker as the iterates approach the
intersection. \blue{Under intrinsic transversality, we prove local linear convergence
for $\rho=0$ and Q-order at least $1+\rho$ for $0<\rho\le1$, including
quadratic convergence when $\rho=1$, without classical transversality. Let \(X:=\mathcal D_r\cap E\) denote the solution manifold. Our analysis isolates the component of the error that is normal to \(X\) within the affine space \(E\). Clean intersection makes the relevant normal projector uniformly coercive on this effective error space, while motion tangent to \(X\) affects the distance to \(X\) only at second order.}
We also record a retraction interpretation of the RN-SLRA limit map under
clean intersection. \blue{The convergence analysis in Section~\ref{sec:local-convergence}
covers the full range $\rho\in[0,1]$; Appendix~\ref{app:rho-zero-retraction}
further proves that the fixed-regularization limit map is smooth and defines a
genuine retraction and, under additional smoothness, a second-order retraction.}
This observation is of independent interest for optimization problems constrained
to intersection manifolds \cite{yang2026optimizationintersectionmanifolds,chen2026retractionsalternatingprojections}.

\blue{For large-scale SLRA problems, we further introduce Inexact Regularized
Newton-SLRA (iRN-SLRA), which replaces the exact manifold projection by a
$\sigma$-quasioptimal projection} \cite{budzinskiy2025quasioptimal}.
\blue{For $0<\rho\le1$, the same intrinsic-transversality analysis yields Q-order at
least $1+\rho$, including quadratic convergence for $\rho=1$, without
classical transversality. For $\rho=0$, the local linear result is retained under the
corresponding quasioptimality contraction condition. This relaxation permits the use of cheaper approximate manifold
projections, which can reduce the per-iteration cost on large-scale
problems.} The local convergence properties of the methods
discussed above are summarized in Table~\ref{tab:comparison}.

\begin{table}[htbp]
  \caption{Local convergence rates of representative methods under different
  intersection conditions.
  \blue{The term ``linear'' refers to Q-linear convergence of the distance or residual specified in the cited result, together with R-linear convergence of the iterates. The parameter \(q\in(0,1)\) denotes the local one-step alternating-projection decrease factor defined in Proposition~\ref{prop:irn-fixed}.}}
  \label{tab:comparison}
  \centering
  \setlength{\tabcolsep}{4pt}
  \renewcommand{\arraystretch}{1.05}

  \begin{tabular}{
    >{\centering\arraybackslash}m{0.20\linewidth}
    >{\centering\arraybackslash}m{0.38\linewidth}
    >{\centering\arraybackslash}m{0.35\linewidth}
  }
    \toprule
    &
    \shortstack{Intrinsic transversality}
    &
    Transversality
    \\
    \midrule

    Cadzow (AP)
    &
    linear
    &
    linear
    \\
    \cmidrule(lr){1-3}

    Newton-SLRA
    &
    \blue{no general well-definedness guarantee}
    &
    quadratic
    \\
    \midrule

    RN-SLRA (ours)
    &
    \blue{\shortstack{
      $\rho=0$: linear;\\
      $0<\rho\le1$: order $1+\rho$;\\
      quadratic if $\rho=1$
    }}
    &
    \blue{\shortstack{
      $\rho=0$: linear;\\
      $0<\rho\le1$: order $1+\rho$;\\
      quadratic if $\rho=1$
    }}
    \\
    \cmidrule(lr){1-3}

    iRN-SLRA (ours)
    &
    \blue{\shortstack{
      $\rho=0$: linear if $\sigma q<1$;\\
      $0<\rho\le1$: order $1+\rho$;\\
      quadratic if $\rho=1$
    }}
    &
    \blue{\shortstack{
      $\rho=0$: linear if $\sigma q<1$;\\
      $0<\rho\le1$: order $1+\rho$;\\
      quadratic if $\rho=1$
    }}
    \\
    \bottomrule
  \end{tabular}
\end{table}

\blue{We evaluate RN-SLRA and iRN-SLRA on constructed SLRA instances and
low-rank Hankel approximation problems at both small and large scales. A
small-scale instance whose clean but nontransversal geometry is verified
analytically exhibits the quadratic local behavior predicted for $\rho=1$ and
also exposes the severe ill-conditioning of the classical Newton tangent
system. In addition, iRN-SLRA exhibits convergence behavior comparable to
RN-SLRA while reducing the cost of the manifold projection, with more
pronounced savings on large-scale problems.}

\section{Preliminaries}
We consider the general problem of approximating the projection of a point $x$ onto the intersection $X:=M\cap E$ of a smooth embedded manifold $M$ and an affine subspace $E$ in a Euclidean space, assuming without loss of generality that $M\neq E$; in particular, the SLRA problem is recovered when $M=\mathcal D_r$.

\subsection{Notations and Basic Facts}
We use $d_C(x)$ to denote the distance from $x$ to the set $C$, and $P_C(x)$ to represent the projection of $x$ onto the set $C$. $B(x,r)$ denotes the ball centered at $x$ with radius $r$. We use $\hat{u} = \frac{u}{\|u\|}$ to represent the unit vector of $u$. 
For any affine subspace $A$, let $A^0$ denote its underlying vector space, so that
\(
A=x+A^0 \text{ for any }x\in A.
\)
In particular, we write
\(
E=x+L \text{ for any }x\in E,
\)
where $L:=E^0$.
For an affine manifold $V$ and a point $x\in V$, we denote by $T_xV$ the affine tangent space of $V$ at $x$, and by $N_xV$ the affine normal space of $V$ at $x$. Their underlying vector spaces are denoted by $T_xV^0$ and $N_xV^0$.
$L^{\perp}$ denotes the orthogonal complement of $L$.

\begin{assumption}
  \label{ass:smooth}
  $E$ is an affine subspace and $M$ is a smooth manifold of class $C^2$. 
\end{assumption}

We first recall some intersection conditions, including the transversality condition \cite{Lewis2008,Ioffe2016Survey1,Ioffe2017Transversality}, the clean intersection condition \cite{AnderssonCarlsson2013}, the intrinsic transversality condition \cite{Drusvyatskiy2013Thesis, DrusvyatskiyIoffeLewis2015}, the separability condition \cite{NollRondepierre2016}, and the subtransversality condition \cite{KrugerLukeThao2017}.
\begin{definition}[Transversality]
  Let $\mathbb E$ be a Euclidean space. Let $M\subset \mathbb E$ be a manifold of class $C^1$ and let $E$ be an affine subspace of $\mathbb E$. We say that $E$ intersects $M$ transversally at a point $\bar x\in X$ if 
    $\operatorname{codim}(L \cap T_{\bar x} M^0) = \operatorname{codim}(L) + \operatorname{codim}(T_{\bar x} M^0). $
\end{definition}

\begin{definition}[Clean intersection]
Let $\mathbb E$ be a Euclidean space. Let $M \subset \mathbb E$ be a
manifold of class $C^p(p\geq 2)$ and let $E$ be an affine subspace of $\mathbb E$.
We say that $E$ intersects $M$ cleanly at a point $\bar x\in X$
if the set $X$ is
a $C^p$ embedded submanifold and
\(
  \blue{T_x X^0 = T_xM^0 \cap L}
\)
for all $x\in X$ sufficiently close to $\bar x$.
\end{definition}

\begin{definition}[Intrinsic Transversality]
  Let $\mathbb E$ be a Euclidean space. Let $M\subset\mathbb E$ be a manifold of class $C^1$ and let $E$ be an affine subspace of $\mathbb E$. We say that $E$ intersects $M$ intrinsically transversally at a point $\bar x\in X$ if there exists a constant $\kappa>0$ such that for all $x\in E\setminus M$ and $y\in M\setminus E$ sufficiently close to $\bar x$,
  \[
  \max \left\{ d\left( \widehat{x - y}, N_y M^0 \right), \; d\left( \widehat{x - y}, N_x E^0 \right) \right\} \ge \kappa.
  \]
\end{definition}

\begin{definition}[Separability]
Let $\mathbb E$ be a Euclidean space. Let $M\subset\mathbb E$ be a manifold of class $C^1$ and let $E$ be an affine subspace of $\mathbb E$. We say that $E$ intersects $M$ separably at a point $\bar{x} \in X$ if there exists an angle $\alpha > 0$ such that for any point $z \in E \setminus M$ sufficiently close to $\bar{x}$, and any points $x \in P_M(z) \setminus E$ and $z' \in P_E(x)$, the angle between the vectors $z - x$ and $z' - x$ is at least $\alpha$.
\end{definition}

The notion of separability was  introduced in \cite{NollRondepierre2016} under the term 0-separability. This property is particularly suitable for the analysis of alternating projections and inexact alternating projection methods \cite{Drusvyatskiy2019}. Intrinsic transversality is the principal geometric assumption in all
of our local convergence results, including the higher-order results.

\begin{assumption}
  \label{ass:intrinsic}
  $E$ intersects $M$ intrinsically transversally at $\bar x$.
\end{assumption}

Under Assumption~\ref{ass:smooth}, intrinsic transversality is equivalent to separability in either order; see Lemma~\ref{lem:equiv-sep-intrinsic} below.
Although separability is not symmetric in general, in the present $C^2$ manifold--affine setting the two directional versions are equivalent.
More precisely, $E$ intersects $M$ separably at $\bar x$ if and only if $M$ intersects $E$ separably at $\bar x$, and these are both equivalent to intrinsic transversality.

\begin{lemma}
  \label{lem:equiv-sep-intrinsic}
Let $\mathbb E$ be a Euclidean space, let $M \subset \mathbb E$ be a $C^2$ manifold, and let $E \subset \mathbb E$ be an affine subspace. Then the following are equivalent at $\bar x \in E \cap M$:
\begin{enumerate}
    \item[(1)] $E$ intersects $M$ separably at $\bar x$;
    \item[(2)] $M$ intersects $E$ separably at $\bar x$;
    \item[(3)] $E$ intersects $M$ intrinsically transversally at $\bar x$.
\end{enumerate}
\end{lemma}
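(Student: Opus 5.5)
The proof will go through (3) as a hub. It will show (3)$\Rightarrow$(1), (3)$\Rightarrow$(2), and then (1)$\Rightarrow$(3) and (2)$\Rightarrow$(3).

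The tool used throughout is the projection characterization of normal cones. For $z\in E$ and $x=P_M(z)$ we have $z-x\in N_xM^0$. For $x\in M$ and $z'=P_E(x)$ we have $x-z'\in L^\perp=N_{z'}E^0$. Every vector of the form $x-P_E(x)$ lies in the fixed subspace $L^\perp$. Every vector $z-P_M(z)$ lies in $N_xM^0$, and by the $C^2$ assumption this space varies Lipschitzly in $x$ near $\bar x$. The $C^2$ hypothesis also gives the standard estimate $d(\widehat{y-x},T_xM^0)\le C\|y-x\|$ for $x,y\in M$ near $\bar x$.

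\emph{(3)$\Rightarrow$(1).} Take $z\in E\setminus M$ near $\bar x$, set $x=P_M(z)\notin E$, and let $z'=P_E(x)$. The vector $\widehat{z-x}$ lies in $N_xM^0$, so intrinsic transversality applied to the pair $(z,x)$ forces $d(\widehat{z-x},L^\perp)\ge\kappa$. Write $u=\widehat{z-x}$ and $v=\widehat{z'-x}\in L^\perp$. The component of $u$ in $L$ then has norm at least $\kappa$. Because $z-z'\in L$ and $x-z'\perp L$, we get $\langle z-x,x-z'\rangle=-\|x-z'\|^2$, so $\langle u,v\rangle=\|x-z'\|/\|z-x\|$. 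Since $z'$ is the nearest point of $E$ to $x$, we have $\|x-z'\|\le\|x-z\|$, and this quotient equals $\|P_{L^\perp}u\|\le\sqrt{1-\kappa^2}$. Hence the angle between $z-x$ and $z'-x$ is bounded below by $\arccos\sqrt{1-\kappa^2}>0$.

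\emph{(3)$\Rightarrow$(2).} Here the roles swap. Take $y\in M\setminus E$, set $x=P_E(y)$ with $y-x\in L^\perp$, and let $y'=P_M(x)$. Apply (3) to the pair $(x,y)$, where $\widehat{x-y}\in N_xE^0$ makes the second term zero, to get $d(\widehat{x-y},N_yM^0)\ge\kappa$. The tangential component of $\widehat{x-y}$ at $y$ is then at least $\kappa$. The angle is at $x$ between $y-x$ and $y'-x$, and $y'-x\in -N_{y'}M^0$. We must bound $\langle \widehat{y-x},\widehat{y'-x}\rangle$ away from $1$. I would argue by contradiction: if the angle is near $0$, then $y'$ lies almost on the segment from $x$ to $y$. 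Then $\widehat{x-y}$ is nearly normal at $y'$, hence, by Lipschitz continuity of normal spaces and $\|y-y'\|=O(\|x-y\|)$, nearly normal at $y$ up to an $O(\text{distance})$ error plus the curvature term. This contradicts the bound $\kappa$ once the neighbourhood is small.

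\emph{(1),(2)$\Rightarrow$(3).} Suppose (3) fails. Then there are sequences $x_k\in E\setminus M$ and $y_k\in M\setminus E$ converging to $\bar x$ with $u_k=\widehat{x_k-y_k}$ satisfying $d(u_k,N_{y_k}M^0)\to0$ and $d(u_k,L^\perp)\to0$. The plan is to replace $(x_k,y_k)$ by a genuine projection pair. Let $y_k'=P_M(x_k)$, or in the (2) case let $x_k'=P_E(y_k)$. The step I expect to be the main obstacle is showing, via $C^2$ curvature bounds, that the new displacement stays asymptotically parallel to $u_k$ and asymptotically in both normal spaces. Near-normality of $u_k$ at $y_k$ implies that $y_k$ is nearly the projection of $x_k$ up to relative error $o(1)$. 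Then both $\widehat{x_k-y_k'}$ and $\widehat{P_E(y_k')-y_k'}$ become nearly parallel, giving vanishing angle and contradicting (1). The symmetric argument contradicts (2). I would handle the relative-error estimate through the standard inequality $\|P_M(x)-y\|\le C\,d(\widehat{x-y},N_yM^0)\|x-y\|$ for $C^2$ manifolds near $\bar x$.
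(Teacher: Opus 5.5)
Your proposal is correct, but it is organized differently from the paper. The paper proves the cycle (1)$\Rightarrow$(2)$\Rightarrow$(3)$\Rightarrow$(1):
\begin{itemize}
\item its (1)$\Rightarrow$(2) is an essentially metric argument. It applies (1) to the triple $x=P_E(y)$, $y^+=P_M(x)$, $x^+=P_E(y^+)$. It then uses that, among all $w\in L^\perp$, the angle with $x-y^+$ is smallest for $w=P_{L^\perp}(x-y^+)=x^+-y^+$, so $x-y\in L^\perp$ makes an angle at least $\alpha$;
\item its (2)$\Rightarrow$(3) is the same contradiction argument as your (2)$\Rightarrow$(3);
\item its (3)$\Rightarrow$(1) is only cited from Noll--Rondepierre.
\end{itemize}

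You instead route everything through (3), and this buys two things.
\begin{itemize}
\item Your (3)$\Rightarrow$(1) computation makes the lemma self-contained and gives the explicit angle $\arcsin\kappa$. It uses $z'-x=P_{L^\perp}(z-x)$, hence $\cos\angle(z-x,z'-x)=\|P_{L^\perp}u\|\le\sqrt{1-\kappa^2}$.
\item Your (3)$\Rightarrow$(2) needs no contradiction at all. Set $a=\widehat{y-x}$ and $b=\widehat{y'-x}$, so $-b\in N_{y'}M^0$, and let $L_N$ be a Lipschitz constant of $y\mapsto P_{N_yM^0}$. Then $\kappa\le d(\widehat{x-y},N_yM^0)\le\|a-b\|+2L_N\|x-y\|$, so $\|a-b\|\ge\kappa/2$ near $\bar x$.
\end{itemize}

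The price of your route is twofold. You need a second contradiction argument, (1)$\Rightarrow$(3). Also, the equivalence (1)$\Leftrightarrow$(2) now passes through the $C^2$ structure, whereas the paper's (1)$\Rightarrow$(2) barely uses smoothness of $M$.

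In both of your contradiction arguments, add the membership checks that the separability definitions require.
\begin{itemize}
\item In (1)$\Rightarrow$(3), show $y_k'=P_M(x_k)\notin E$. Otherwise $\widehat{x_k-y_k'}\in L$, which is incompatible with its being $o(1)$-close to $u_k$ while $\|P_Lu_k\|\to0$.
\item In (2)$\Rightarrow$(3), show $x_k'=P_E(y_k)\notin M$. Otherwise $P_M(x_k')=x_k'$, so $\|P_M(x_k')-y_k\|=\|x_k'-y_k\|$, contradicting the $o(\|x_k'-y_k\|)$ bound. The paper performs exactly this check.
\end{itemize}

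Your key inequality $\|P_M(x)-y\|\le C\,d(\widehat{x-y},N_yM^0)\|x-y\|$ is valid and is proved as in the paper. Compare $x$ with $y+\|x-y\|v$, where $v$ is the nearest point of $N_yM^0$ to $\widehat{x-y}$; its projection onto $M$ is $y$. Then use local Lipschitz continuity of $P_M$.
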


\begin{proof}
Write $E=\bar x+L$, so $\blue{N_x E^0=L^\perp}$ for all $x\in E$.

\smallskip
\noindent
$(1)\Rightarrow(2)$.
Assume that $E$ intersects $M$ separably at $\bar x$. 
After possibly shrinking $U$, we may assume that $P_E(M\cap U)\subset U$ and $P_M(E\cap U)\subset U$ by continuity of $P_E$ at $\bar x$ and local single-valuedness and continuity of $P_M$ near $\bar x$.

Take
\(
y\in (M\cap U)\setminus E,\ x=P_E(y)\in U\setminus M,\ y^+\in P_M(x)\cap U,
\)
and set $x^+:=P_E(y^+)$. If $y^+\in E$, then
\(
y^+-x\in L,\ y-x\in L^\perp,
\)
hence
\(
\angle(y-x,y^+-x)=\frac{\pi}{2}\ge \alpha.
\)
Assume now $y^+\notin E$. Since $x\in E$, $y^+\in P_M(x)\setminus E$, and $x^+=P_E(y^+)$, separability gives
\(
\angle(x-y^+,x^+-y^+)\ge \alpha.
\)
Moreover,
\(x^+-y^+ = P_{L^\perp}(x-y^+)\) and \(x-y\in L^\perp\). By Cauchy's
inequality, among all nonzero vectors \(w\in L^\perp\), the angle between
\(x-y^+\) and \(w\) is minimized when \(w\) is parallel to
\(P_{L^\perp}(x-y^+)\). Hence
\(
    \angle(x-y^+,x-y)
    \ge \angle(x-y^+,x^+-y^+)
    \ge \alpha .
\)
Equivalently,
\(
\angle(y-x,y^+-x)\ge \alpha.
\)
Thus $M$ intersects $E$ separably at $\bar x$.

\smallskip
\noindent
$(2)\Rightarrow(3)$.
Assume that $M$ intersects $E$ separably at $\bar x$, but intrinsic transversality fails. Then there exist
\(
a_k\in E\setminus M,\ y_k\in M\setminus E,\ a_k\to\bar x,\ y_k\to\bar x,
\)
such that, with $w_k:=\widehat{y_k-a_k}$,
\[
\max\{d(w_k,L^\perp),\,d(w_k,N_{y_k}M^0)\}\to 0.
\]
Set
\(
x_k:=P_E(y_k),\ t_k:=\|y_k-x_k\|,\ u_k:=\frac{y_k-x_k}{\|y_k-x_k\|}.
\)
Then $u_k\in L^\perp$ and
\(
P_{L^\perp}(y_k-a_k)=y_k-x_k,
\
P_{L^\perp}w_k=\frac{t_k}{\|y_k-a_k\|}u_k.
\)
Since $d(w_k,L^\perp)\to0$, we have $\|P_{L^\perp}w_k\|\to1$, hence
\(
u_k=\frac{P_{L^\perp}w_k}{\|P_{L^\perp}w_k\|}\to w_k.
\)
Choose $v_k\in N_{y_k}M^0$ with $\|v_k-w_k\|=d(w_k,N_{y_k}M^0)\to0$. Then
\(
\|u_k-v_k\|\to 0. 
\)
Define
\(
z_k:=y_k-t_kv_k.
\)
Since $M$ is a $C^2$ manifold, for all large $k$, $z_k$ lies in a tubular neighborhood of $M$ and
\(
P_M(z_k)=y_k.
\)
Moreover, $P_M$ is locally Lipschitz there. As
\(
x_k=y_k-t_ku_k,
\
z_k-x_k=t_k(u_k-v_k),
\)
we obtain
\[
\|z_k-x_k\|=t_k\|u_k-v_k\|=o(t_k),
\qquad
\|z_k-y_k\|=t_k\|v_k\|.
\]
Since $\|v_k-w_k\|\to 0, \|w_k\|=1$, we have $\|z_k-y_k\|=(1+o(1))t_k.$ We claim that for all large $k$,  $x_k\notin M$. Otherwise,
passing to a subsequence if necessary, assume $x_k\in M$. Then $d_M(z_k)\leq \|z_k-x_k\|=o(t_k)$, whereas $P_M(z_k)=y_k$ implies $d_M(z_k)=t_k\|v_k\|=(1+o(1))t_k$, a contradiction.

Now let $y_k^+:=P_M(x_k)$. By local Lipschitz continuity of $P_M$,
\[
\|y_k^+-y_k\|
=
\|P_M(x_k)-P_M(z_k)\|
\le C\|x_k-z_k\|
=o(t_k).
\]
Since $\|y_k-x_k\|=t_k$, it follows that
\(
\|(y_k^+-x_k)-(y_k-x_k)\|=o(t_k),
\)
and therefore
\(
\angle(y_k-x_k,y_k^+-x_k)\to0.
\)
But for all large $k$,
\(
y_k\in M\setminus E,\ x_k=P_E(y_k)\in E\setminus M,\ y_k^+\in P_M(x_k),
\)
so separability of $M$ relative to $E$ yields some $\alpha_0>0$ such that
\(
\angle(y_k-x_k,y_k^+-x_k)\ge \alpha_0,
\)
a contradiction. Hence $E$ and $M$ are intrinsically transversal at $\bar x$.

\smallskip
\noindent
\((3)\Rightarrow(1)\). This follows from the fact that intrinsic transversality implies separability; see \cite[Proposition~2]{NollRondepierre2016}. 
Therefore $(1)$, $(2)$, and $(3)$ are equivalent.
\end{proof}

\begin{definition}[Subtransversality]
Let $\mathbb E$ be a Euclidean space. Let $M\subset\mathbb E$ be a manifold of class $C^1$ and let $E$ be an affine subspace of $\mathbb E$. We say that $E$ intersects $M$ subtransversally at a point $\bar{x} \in X$ if there exist constants $\tau > 0$ and a neighborhood $U$ of $\bar{x}$ such that for all $w \in U$,
\[
d(w, X) \le \tau \left( d(w, E) + d(w, M) \right).
\]
\end{definition}

Under the present \(C^2\) embedded manifold--affine setting and with the
definitions used here, the following relationships hold locally, rather than as
unconditional statements for arbitrary sets: transversality implies clean
intersection; 
clean intersection is equivalent to intrinsic transversality\cite[Theorem~5.1]{yang2026optimizationintersectionmanifolds};
intrinsic transversality implies subtransversality; and intrinsic transversality
is equivalent to separability in either order; see
\cite{DrusvyatskiyIoffeLewis2015,NollRondepierre2016,yang2026optimizationintersectionmanifolds}.

\subsection{Cadzow Algorithm}
The alternating projection (AP) method has a long history \cite{Schwarz1869,vonNeumann1950}. Recent studies have analyzed AP and
inexact AP methods for various sets \cite{BauschkeBorwein1993,LewisLukeMalick2009,KrugerThao2016}, together with their local linear convergence properties \cite{LewisLukeMalick2009,NollRondepierre2016,LukeTeboulleThao2020,Drusvyatskiy2019}.
In the context of structured low-rank approximation, the Cadzow algorithm \cite{Cadzow1988,Gillard2010} can be viewed as an alternating projection method between the affine space $E$ and the low-rank manifold $M$.
More precisely, starting from $x_k\in E$, one step of the Cadzow iteration is
given by
\begin{align}
  x_{k+1}=P_E P_M(x_k).
\end{align}

In the local rank-\(r\) neighborhood considered in this paper, it is well known that  the projection
onto \(\mathcal D_r\) can be  computed by the truncated SVD.
\begin{lemma}[Eckart-Young Theorem]
  \label{thm:svd}
  Let $x\in\mathcal{M}_{p,q}(\mathbb{R})$ be a matrix, and let $r$ be a positive integer not greater than $\min(p,q)$. Let $x=USV^\top$ be its SVD, where $S=\diag(\sigma_1,\cdots,\sigma_{\min(p,q)})$ with $\sigma_1\geq \cdots \geq \sigma_{\min(p,q)}\geq 0$. If \(x\) is sufficiently close to a rank-\(r\) point \(x^*\) with \(\sigma_r(x^*)>0\), then a projection $P_{\mathcal{D}_r}(x)$ is given by 
    $$ P_{\mathcal D_r}(x)=U\tilde{S}V^\top,\ \ \tilde S=\diag(\sigma_1,\cdots,\sigma_r,0,\cdots,0).  $$
\end{lemma}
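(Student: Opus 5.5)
The plan is to reduce the metric projection onto $\mathcal D_r$ to a variational problem over matrices of rank at most $r$, solve that problem with the classical Eckart--Young--Mirsky inequality, and then use the local hypothesis to show that the minimizer found has rank exactly $r$. Write $\mathcal D_{\le r}$ for the closed set of matrices of rank at most $r$. Since $\mathcal D_r\subset\mathcal D_{\le r}$, any $y\in\mathcal D_{\le r}$ that minimizes $\|x-y\|$ over $\mathcal D_{\le r}$ and happens to have rank exactly $r$ is automatically a nearest point of $x$ in $\mathcal D_r$. So the proof splits into a global optimality step and a local rank step.

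For the optimality step, take the SVD $x=USV^\top$ and set $x_r:=U\tilde SV^\top$. By unitary invariance of the Frobenius norm, $\|x-x_r\|^2=\sum_{i>r}\sigma_i^2$. For an arbitrary $y$ with $\operatorname{rank}y\le r$, I will use Weyl's singular value inequality $\sigma_{i+j-1}(A+B)\le\sigma_i(A)+\sigma_j(B)$ with $A=x-y$, $B=y$ and $j=r+1$. Since $\sigma_{r+1}(y)=0$, this gives $\sigma_{i+r}(x)\le\sigma_i(x-y)$ for every $i$. Squaring and summing over $i$ yields
\[
\|x-y\|^2=\sum_i\sigma_i(x-y)^2\ \ge\ \sum_{i>r}\sigma_i(x)^2=\|x-x_r\|^2 .
\]
Hence $x_r$ is a nearest point of $x$ in $\mathcal D_{\le r}$.

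For the local rank step, the hypothesis $\sigma_r(x^*)>0$ is used. Singular values are $1$-Lipschitz in the operator norm, so $|\sigma_r(x)-\sigma_r(x^*)|\le\|x-x^*\|$. Therefore, if $\|x-x^*\|<\sigma_r(x^*)$, then $\sigma_r(x)>0$ and $x_r$ has rank exactly $r$. It follows that $x_r\in\mathcal D_r$ and $\|x-x_r\|=d_{\mathcal D_{\le r}}(x)\le d_{\mathcal D_r}(x)$, so $x_r\in P_{\mathcal D_r}(x)$.

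If uniqueness is also wanted, so as to match the ``locally single-valued'' remark, I would shrink the neighborhood further. Because $\operatorname{rank}x^*=r$, we have $\sigma_{r+1}(x^*)=0$, so taking $\|x-x^*\|<\sigma_r(x^*)/2$ gives $\sigma_r(x)>\sigma_{r+1}(x)$. With this spectral gap, equality in the Weyl-based chain forces $y=x_r$. I expect the main obstacle to be this equality analysis, so I would treat uniqueness as optional: the statement only asserts that $x_r$ is a projection. The Weyl inequality itself I would quote from standard matrix analysis.
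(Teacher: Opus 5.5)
The paper gives no proof of this lemma. It invokes it as the classical Eckart--Young theorem (``it is well known that the projection onto $\mathcal D_r$ can be computed by the truncated SVD''), so your argument supplies a proof the paper omits, and it is correct.

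The Weyl bound $\sigma_{i+r}(x)\le\sigma_i(x-y)$ for $\operatorname{rank}y\le r$ shows that $U\tilde SV^\top$ is optimal over $\mathcal D_{\le r}$ for \emph{every} choice of SVD. Ties $\sigma_r(x)=\sigma_{r+1}(x)$ are therefore harmless, which matches the phrase ``a projection''. The perturbation bound $|\sigma_r(x)-\sigma_r(x^*)|\le\|x-x^*\|_2\le\|x-x^*\|$ then makes ``sufficiently close'' explicit, with radius $\sigma_r(x^*)$. This is exactly where the hypothesis $\sigma_r(x^*)>0$ is used, a point the paper leaves implicit. Your smaller radius $\sigma_r(x^*)/2$ gives the gap $\sigma_r(x)>\sigma_{r+1}(x)$ that underlies the paper's remark that the projection is locally single-valued.

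For that optional uniqueness claim, equality in the Weyl chain only fixes the singular values of $x-y$, not $y$ itself. So it does not by itself force $y=U\tilde SV^\top$. A cleaner route is as follows. Let $y$ minimize over $\mathcal D_{\le r}$, and let $P$ have orthonormal columns spanning the range of $y$. Then $y=PP^\top x$, because $PP^\top x$ is the unique nearest point to $x$ among matrices with range in that of $P$, and all of these have rank at most $r$. Hence $\|x-y\|^2=\|x\|^2-\operatorname{tr}(P^\top xx^\top P)$. The equality case of Ky Fan's maximum principle under the gap forces $P$ to have $r$ columns spanning the range of $U_r$. This gives $y=U_rU_r^\top x=U\tilde SV^\top$.
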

Then the Cadzow algorithm can be formulated as:
\begin{equation}
\begin{aligned}
    &U,S,V \leftarrow \text{SVD}(x_k),\qquad y_k=U_rS_rV_r^\top, \\
    &x_{k+1}=x_k+\sum_{i=1}^d \langle y_k-x_k,E_i \rangle E_i.
\end{aligned}
\end{equation}
where $U_r$ denotes the matrix consisting of the first $r$ columns of $U$, $V_r$ denotes the matrix consisting of the first $r$ columns of $V$, $S_r$ denotes the $r\times r$ top-left submatrix of $S$, $d$ is the dimension of $E$, and $\{E_i\}_{i=1}^d$ denotes an orthonormal basis of $L$. 
The update formula for $x_{k+1}$ is obtained by 
  $$x_{k+1}=P_E(y_k)=x_k+P_L(y_k-x_k).$$

If $E$ intersects $M$ transversally at $\bar x\in X$ and the initial point is sufficiently close to $\bar x$, the Cadzow algorithm   converges locally linearly to a point $x_{\infty}\in X$ \cite{Lewis2008}. In fact, local linear convergence can already be guaranteed when $E$
intersects $M$ separably \cite{Drusvyatskiy2019}.

\subsection{Newton-SLRA Algorithm}
The Newton-SLRA algorithm is motivated by Newton's method. In each iteration of the classical Newton-SLRA, given the current iterate \(x_k \in E\), one first computes its projection onto the manifold \(y_k = P_{M}(x_k)\), and then solves a linear least-squares problem:
\(
x_{k+1} = P_{E \cap T_{y_k}M}(x_k),
\)
i.e., the closest point to \(x_k\) in the intersection of \(E\) and the tangent space \(T_{y_k}M\). Under the transversality condition, this algorithm converges locally quadratically. When $M=\mathcal D_r$, the algorithm can be formulated as:
\begin{equation}
\label{eq:newton-slra}
\begin{aligned}
    &U,S,V\leftarrow \mathrm{SVD}(x_k),\qquad y_k = U_r S_r V_r^\top,\\
    &N_{ij}\leftarrow u_i v_j^\top,\quad i=r+1,\dots,p,\ \ j=r+1,\dots,q,\\
    &A\leftarrow \big(\langle N_{ij},E_\ell\rangle\big)_{(i,j),\ell}, \qquad b\leftarrow \big(\langle N_{ij},\,y_k-x_k\rangle\big)_{(i,j)},\\
    &x_{k+1}=x_k+\displaystyle\sum_{\ell=1}^d (A^\dagger b)_\ell E_\ell.
\end{aligned}
\end{equation}

However, when transversality fails, the matrix \(A\) in~\eqref{eq:newton-slra} may lose rank at the solution. In the fixed-rank SLRA setting, full row rank of the tangent-space linear system is equivalent to transversality of \(E\) and \(\mathcal D_r\) at the intersection point. Hence, near a nontransversal solution, the associated least-squares problem may become singular or severely ill-conditioned.

Another issue is that the well-definedness of Newton-SLRA relies on the non-emptiness of \( E \cap T_{y_k} M \). Under transversality, there exists a positive lower bound \( \alpha \) for the angle between \( E \) and \( T_{y_k} M \) in some neighborhood of \( \bar{x} \), but this cannot be guaranteed under intrinsic transversality, as \( E \cap T_{y_k} M \) may be empty.

These issues motivate the regularized modification introduced below.

\section{Regularized Newton-SLRA}
We introduce a regularization term and define the next iterate as:
\begin{equation}
\label{eq:rnslra}
\begin{aligned}
    &y_k=P_M(x_k),\\
    &x_{k+1} = \arg\min_{x \in E} \left\{ \dfrac{1}{2} \| x - y_k \|^2 + \dfrac{1}{2\mu_k} \| P_{\blue{N_{y_k}M^0}}(x - y_k) \|^2 \right\}, \quad \mu_k > 0.
\end{aligned}
\end{equation}
where \(P_{N_{y_k}M}\) is the orthogonal projector onto the normal space \(N_{y_k}M\), and \(\mu_k\) is a regularization parameter. In the algorithm studied below, $\mu_k$ is chosen adaptively according to: 
  $$ \mu_k = c \cdot r(x_k)^\rho,\quad r(x_k)=\|x_k-y_k\|=d_M(x_k). $$
\blue{Here $c>0$ is fixed and $\rho\in[0,1]$ controls how rapidly the
regularization vanishes with the manifold residual.}
This choice of the regularization parameter is inspired by the residual-dependent regularization strategy used in the Inexact Regularized Proximal Newton method (IRPN) \cite{yue2018}, and is also related to earlier regularized Newton methods for convex minimization with singular solutions \cite{LiFukushimaQiYamashita2004} as well as improved proximal Newton implementations such as newGLMNET \cite{YuanHoLin2012}.

Since \(E\) is affine, the objective function in \eqref{eq:rnslra} for solving $x_{k+1}$ is a strongly convex quadratic problem with a unique solution, which can be obtained by solving a linear system.
Thus, the well-definedness of the RN-SLRA step does not require the transversality condition between $E$ and $M$. 

\subsection{Derivation of the Linear System}

Let \(\{E_1,\dots,E_d\}\) be an orthonormal basis of \(L\), set
\(b=x_k-y_k\), and write \(N=N_{y_k}M\). Choose an orthonormal basis
\(\{N_j\}_{j=1}^s\) of \(N\). Since \(E=x_k+L\), any
\(x\in E\) can be written as
\(x=y_k+b+\sum_{i=1}^d\alpha_iE_i\). Substituting this expression into
\eqref{eq:rnslra} and using the first-order optimality condition gives
\begin{align}
  \label{eq:sys}
  \left(I_d+\frac{1}{\mu_k}A^\top A\right)\alpha
  =
  -\beta-\frac{1}{\mu_k}A^\top \eta,
\end{align}
where
\(
A_{ji}=\langle N_j,E_i\rangle,
\eta_j=\langle N_j,b\rangle,\)
and
\(
\beta_i=\langle b,E_i\rangle.
\)
Since \(A^\top A\succeq0\) and \(\mu_k>0\), the coefficient matrix is positive
definite. The next iterate is
\(x_{k+1}=x_k+\sum_{i=1}^d\alpha_iE_i\).

For the fixed-rank manifold $M=\mathcal D_r$, this yields Algorithm~\ref{alg:RN-SLRA}.
\Crefname{ALC@unique}{Line}{Lines}

\begin{algorithm}[t]
\caption{Regularized Newton-SLRA (RN-SLRA)}
\label{alg:RN-SLRA}
\begin{algorithmic}[1]
\STATE \textbf{Input:} Initial point $x_0\in E$, parameters $c>0$ and $0\le\rho\le 1$
\STATE Write $E=x_0+L$, where $L=E^0$, and fix an orthonormal basis $\{e_1,\dots,e_d\}$ of $L$
\FOR{$k=0,1,2,\dots$}
    \STATE Compute an SVD $x_k=U\Sigma V^\top$ and set
    \(
        y_k:=U_r\Sigma_rV_r^\top\in P_{\mathcal D_r}(x_k).
    \)
    \STATE Set $r(x_k):=\|x_k-y_k\|$; \blue{\textbf{if} $r(x_k)=0$, \textbf{stop}; otherwise} set $\mu_k:=c\,r(x_k)^\rho$ and $b_k:=x_k-y_k$.
    \STATE For $a=r+1,\dots,m$ and $b=r+1,\dots,n$, define $N_{ab}:=u_av_b^\top$
    \STATE Form \(A_k\), \(\beta_k\), and \(\eta_k\) by
        \((A_k)_{(a,b),i}=\langle N_{ab},e_i\rangle\),
        \((\beta_k)_i=\langle b_k,e_i\rangle\), and
        \((\eta_k)_{(a,b)}=\langle N_{ab},b_k\rangle\).
    \STATE Solve \((I_d+\mu_k^{-1}A_k^\top A_k)\alpha^{(k)}=-\beta_k-\mu_k^{-1}A_k^\top\eta_k\).
    \STATE Set $x_{k+1}:=x_k+\sum_{i=1}^d \alpha_i^{(k)}e_i$
\ENDFOR
\end{algorithmic}
\end{algorithm}

\subsection{Limiting Interpretation of the Regularized Step}

Assume that \(E\) and \(M\) intersect transversally at \(\bar x\), so that
\(E\cap T_{y_k}M\) is nonempty for \(y_k\) sufficiently close to \(\bar x\).
The role of \(\mu_k\) can be understood from two limiting regimes.

If \(\mu_k\to\infty\), the penalty term vanishes and the subproblem reduces to
\(\min_{x\in E}\|x-y_k\|^2\). Hence the step becomes
\(p_k=P_E(y_k)\), namely the alternating projection step. 
If $\mu_k \to 0$, the penalty term enforces
$P_{\blue{ N_{y_k}M^0}}(x-y_k)=0$, or equivalently $x\in T_{y_k}M$.
Thus the limiting problem is
\(
    \min_{x\in E\cap T_{y_k}M} \|x-y_k\|^2 .
\)
Since $y_k=P_M(x_k)$, we have $x_k-y_k\in \blue{N_{y_k}M^0}$. Hence, for every
$x\in T_{y_k}M$,
\(
    \|x-x_k\|^2
    =
    \|x-y_k\|^2+\|x_k-y_k\|^2 .
\)
Therefore, minimizing the distance to $y_k$ over $E\cap T_{y_k}M$ is equivalent
to minimizing the distance to $x_k$ over the same set. The limiting step is the
Newton--SLRA point
\(
    q_k=P_{E\cap T_{y_k}M}(x_k),
\)
provided that $E\cap T_{y_k}M$ is nonempty.

Thus, for \(\mu_k\in(0,\infty)\), the regularized step interpolates between
alternating projections and Newton-SLRA: smaller \(\mu_k\) gives a more
Newton-like step, while larger \(\mu_k\) gives a more projection-like step.
This interpretation relies on the local nonemptiness of \(E\cap T_{y_k}M\),
which follows from transversality but may fail under
Assumption~\ref{ass:intrinsic}; it is used only as intuition and not in the
convergence analysis below.

\subsection{Inexact Regularized Newton-SLRA}
We replace \(y_k=P_M(x_k)\) in \eqref{eq:rnslra} by a quasioptimal projection and obtain iRN-SLRA:
\begin{equation}
  \label{eq:iRN-SLRA}
\begin{aligned}
    &y_k\in M \ \text{such that}\ \|x_k-y_k\|\le \sigma\, d_M(x_k),
    \quad \sigma\ge 1, \\
    &x_{k+1}=\displaystyle \arg\min_{x\in E}
    \left\{
      \frac12\|x-y_k\|^2
      +\frac{1}{2\mu_k}\bigl\|P_{\blue{N_{y_k}M^0}}(x-y_k)\bigr\|^2
    \right\},
    \quad \mu_k>0.
\end{aligned}
\end{equation}
When \(\sigma=1\), iRN-SLRA reduces to RN-SLRA.
For \(\sigma>1\), \(y_k\) is a \(\sigma\)-quasioptimal projection of \(x_k\)
onto \(M\), in the sense of \cite{budzinskiy2025quasioptimal}.
This quasioptimal model is well suited to low-rank approximation: truncated
SVD gives the exact projection with \(\sigma=1\), while randomized SVD provides a practical approximate rank-\(r\) projection,
with residual error bounds available in expectation or with high probability
under standard randomized low-rank approximation results
\cite{halko2011finding}; partial SVD can also be
used in practice \cite{larsen1998lanczos}. Compared with a full SVD, these
approaches can significantly reduce the computational cost, especially for
large-scale problems.

For an \(m\times n\) rank-\(r\) SLRA problem with affine dimension \(d\), one RN-SLRA iteration consists of:
(i) computing the rank-\(r\) projection, implemented by a truncated or full SVD;
(ii) forming the normal-coordinate matrix \(A_k\in\mathbb R^{(m-r)(n-r)\times d}\);
and (iii) solving a \(d\times d\) symmetric positive definite linear system.
Thus the dominant cost is typically the rank-\(r\) projection and the construction of \(A_k\), while the \(d\times d\) solve is moderate when \(d\) is small or structured.
For Hankel/Toeplitz structures, the affine basis and matrix-vector products can often be represented implicitly, and randomized or partial SVD can reduce the projection cost in large-scale settings.

\color{black}
\section{\texorpdfstring{\textcolor{black}{Local Convergence Analysis}}{Local Convergence Analysis}}
\label{sec:local-convergence}
\leavevmode\blue{In this section, we establish local convergence of RN-SLRA over the full
range $\rho\in[0,1]$ directly from the clean-intersection geometry implied by
Assumption~\ref{ass:intrinsic}. The normal projector is uniformly coercive on
the effective error space orthogonal to the solution manifold, whereas motion
tangent to the solution manifold contributes to the distance error only at
second order. For iRN-SLRA, the same mechanism yields higher-order convergence
when $\rho>0$, while a short alternating-projection argument treats fixed
regularization. Throughout, the iteration is terminated whenever the current
residual vanishes.}

\color{black}
\subsection{\texorpdfstring{\blue{Local Convergence of RN-SLRA under Intrinsic Transversality}}{Local Convergence of RN-SLRA under Intrinsic Transversality}}
\leavevmode\blue{Under Assumptions~\ref{ass:smooth} and~\ref{ass:intrinsic}, the
intersection is locally clean, so $X$ is a $C^2$ embedded submanifold and
$T_zX^0=L\cap T_zM^0$ for $z\in X$ near $\bar x$.}
\begin{lemma}
	\label{lem:dist-y}
	Suppose that Assumptions~\ref{ass:smooth} and \ref{ass:intrinsic} hold. There exist constants $\delta_1>0$ and $\kappa>0$ such that, for every $y\in M\cap B(\bar x,\delta_1)$,
	\(
	d_{X}(y)\leq \kappa\, d_E(y).
	\)
	\end{lemma}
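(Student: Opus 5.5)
We will derive the bound from subtransversality, which the paper recalls as a consequence of intrinsic transversality in the present $C^2$ manifold--affine setting. By Lemma~\ref{lem:equiv-sep-intrinsic} and the cited results \cite{DrusvyatskiyIoffeLewis2015,NollRondepierre2016}, Assumption~\ref{ass:intrinsic} gives a neighborhood $U$ of $\bar x$ and a constant $\tau>0$ such that
\[
d_X(w)\le \tau\bigl(d_E(w)+d_M(w)\bigr)\qquad\text{for all } w\in U .
\]
Choose $\delta_1>0$ with $B(\bar x,\delta_1)\subset U$. For $y\in M\cap B(\bar x,\delta_1)$ we have $d_M(y)=0$, so $d_X(y)\le \tau\, d_E(y)$. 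The lemma then holds with $\kappa=\tau$.

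If one prefers not to rely on the chain ``intrinsic transversality $\Rightarrow$ subtransversality'' as a black box, we would prove it directly, arguing by contradiction. Suppose there are $y_k\in M\setminus E$ with $y_k\to\bar x$ and $d_X(y_k)/d_E(y_k)\to\infty$. Run alternating projections starting from $y_k$: set $x^k_0=P_E(y_k)$, then alternately project onto $M$ and onto $E$.

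Separability (Lemma~\ref{lem:equiv-sep-intrinsic}) together with the standard argument of \cite{Drusvyatskiy2019,NollRondepierre2016} gives two facts for starting points close enough to $\bar x$:
\begin{itemize}
\item a uniform linear contraction $\|x^k_{j+1}-y^k_{j+1}\|\le q\|x^k_j-y^k_j\|$ with $q<1$;
\item the iterates stay near $\bar x$.
\end{itemize}
Summing the step lengths as a geometric series, the sequence converges to a limit $z_k\in X$ with
\[
\|y_k-z_k\|\le \frac{C}{1-q}\,\|y_k-P_E(y_k)\|=C'\,d_E(y_k).
\]
Hence $d_X(y_k)\le C'\,d_E(y_k)$, which contradicts the assumption.

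The main obstacle is keeping all iterates inside the neighborhood where separability and the contraction are valid. We will handle this in the usual way. Both $\|y_k-P_E(y_k)\|\le\|y_k-\bar x\|$ and the total path length are bounded by a constant multiple of $\|y_k-\bar x\|$, so shrinking $\delta_1$ keeps the whole trajectory within the neighborhood. In the first route this issue is absorbed into the cited subtransversality result, and that route is the one we would present.
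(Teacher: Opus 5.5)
Your first route is exactly the paper's argument: the paper simply notes that this lemma follows from subtransversality, which is implied by intrinsic transversality, and setting $d_M(y)=0$ for $y\in M$ then gives $d_X(y)\le\tau\, d_E(y)$, i.e.\ $\kappa=\tau$. Your alternating-projection argument is a correct self-contained replacement for the cited implication intrinsic transversality $\Rightarrow$ subtransversality, but it is not needed.
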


	\begin{lemma}
	\label{lem:dist-x}
	Suppose that Assumptions~\ref{ass:smooth} and \ref{ass:intrinsic} hold. There exist constants $\delta_2>0$ and $\kappa'>0$ such that, for every $x\in E\cap B(\bar x,\delta_2)$,
	\(
	d_{X}(x)\leq \kappa' d_M(x).
	\)
	\end{lemma}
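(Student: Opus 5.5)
We must prove Lemma~\ref{lem:dist-x}: for $x\in E$ near $\bar x$, $d_X(x)\le\kappa' d_M(x)$. The plan is to combine Lemma~\ref{lem:dist-y} with a perturbation argument based on local Lipschitz properties. The idea is to pass from $x\in E$ to its manifold projection $y=P_M(x)$, which is well defined and continuous near $\bar x$ since $M$ is $C^2$. We then control $d_X(y)$ through $d_E(y)$, and finally return to $x$ by the triangle inequality.

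\textbf{Step 1 (choice of neighborhood).} Let $\delta_1$ be as in Lemma~\ref{lem:dist-y}. Because $M$ is a $C^2$ embedded manifold, $P_M$ is single-valued and continuous on a tubular neighborhood of $\bar x$, and $P_M(\bar x)=\bar x$. We choose $\delta_2>0$ so small that, for every $x\in B(\bar x,\delta_2)$, the point $P_M(x)$ exists, is unique, and lies in $B(\bar x,\delta_1)$. For instance, $\|P_M(x)-\bar x\|\le \|P_M(x)-x\|+\|x-\bar x\|\le 2\|x-\bar x\|$, since $\bar x\in M$ gives $d_M(x)\le\|x-\bar x\|$. Hence $\delta_2\le\delta_1/2$ suffices, provided $\delta_2$ is also within the tubular radius.

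\textbf{Step 2 (the estimate).} Fix $x\in E\cap B(\bar x,\delta_2)$ and set $y=P_M(x)$, so that $\|x-y\|=d_M(x)$. Since $x\in E$, we have $d_E(y)\le\|y-x\|=d_M(x)$. Lemma~\ref{lem:dist-y} applies because $y\in M\cap B(\bar x,\delta_1)$, and it gives $d_X(y)\le\kappa\,d_E(y)\le\kappa\,d_M(x)$. Since $d_X$ is $1$-Lipschitz,
\[
d_X(x)\le d_X(y)+\|x-y\|\le(\kappa+1)\,d_M(x).
\]
Thus $\kappa'=\kappa+1$ works.

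\textbf{Main obstacle.} The only delicate point is ensuring that the projection $y$ stays inside the ball where Lemma~\ref{lem:dist-y} holds, and that $P_M$ is well defined there. Both follow from the $C^2$ regularity in Assumption~\ref{ass:smooth} and the fact that $\bar x\in M$, as handled in Step~1.
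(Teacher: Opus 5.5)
Your proof is correct, but it takes a different route from the one the paper indicates. The paper gives no separate proof. It remarks that both Lemma~\ref{lem:dist-y} and Lemma~\ref{lem:dist-x} follow from subtransversality, which in turn follows from intrinsic transversality. Subtransversality gives $d_X(w)\le\tau\,(d_E(w)+d_M(w))$ for $w$ near $\bar x$. For $x\in E$ one has $d_E(x)=0$, so $d_X(x)\le\tau\,d_M(x)$ at once. That argument needs no projection, no ball-containment check, and no appeal to the other lemma.

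You instead take Lemma~\ref{lem:dist-y} as a black box and transfer it from $M$ to $E$. You project $x$ onto $M$ and use the $1$-Lipschitz property of $d_X$, which gives $\kappa'=\kappa+1$.

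\emph{What your route buys:} it shows the two one-sided error bounds are equivalent up to constants. The mirror argument with $P_E$ in place of $P_M$ gives the converse, so only one bound needs to be derived from the geometric hypothesis.

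\emph{What it costs:} it never uses intrinsic transversality directly, so it is only as complete as Lemma~\ref{lem:dist-y}. The paper justifies that lemma by the same subtransversality remark, so specializing subtransversality directly is the more economical proof.
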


  Both of the above lemmas can be derived from subtransversality, which itself follows from intrinsic transversality.
  These two lemmas show that there exists a neighborhood $U$ of $\bar x$ such that the distance from a point in $E\cap U$ to $M$, or from a point in $M\cap U$ to $E$, is of the same order as its distance to $X$.

\begin{proposition}[Second-order estimates near a $C^2$ manifold]
\label{prop:normal-second-order}
Suppose that Assumption~\ref{ass:smooth} holds. \blue{Then there exist a neighborhood
$U$ of $\bar x$, a radius $\delta>0$, and constants $\eta,G,G'>0$ such that
the following statements hold:}
\begin{enumerate}[label=(\arabic*)]
    \item \blue{for all $y,z\in M\cap U$ satisfying $\|z-y\|<\delta$,}
    \begin{equation}
    \|P_{N_yM^0}(z-y)\|
    \le \eta \|z-y\|^2;
    \label{eq:normal-quadratic-prop}
    \end{equation}

    \item \blue{for all $y\in M\cap U$ and $x\in U$ satisfying
    $\|x-y\|<\delta$,}
    \begin{equation}
    d_M(x)
    \le \|P_{N_yM^0}(x-y)\|
    +G\|x-y\|^2;
    \label{eq:distance-normal-upper-prop}
    \end{equation}

    \item \blue{for all $y\in M\cap U$ and $x\in U$ satisfying
    $\|x-y\|<\delta$,}
    \begin{equation}
    \|P_{N_yM^0}(x-y)\|
    \le d_M(x)
    +G'\|x-y\|^2.
    \label{eq:distance-normal-lower-prop}
    \end{equation}
\end{enumerate}
\end{proposition}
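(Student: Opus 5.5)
We work in a local chart of $M$ near $\bar x$ and use the tubular-neighborhood structure of a $C^2$ embedded submanifold. Choose $U$ and $\delta$ small enough that three properties hold. First, $M\cap \overline{U'}$ is compact for a slightly larger neighborhood $U'$. Second, $P_M$ is single-valued and Lipschitz on $U$, with $P_M(U)\subset U'$. Third, the normal projector field $y\mapsto P_{N_yM^0}$ is $C^1$, hence Lipschitz with some constant $\Lambda$ on $M\cap U'$.

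For (1), represent $M$ locally as a graph over its tangent space. Near $y$ there is a $C^2$ map $h_y:T_yM^0\to N_yM^0$ with $h_y(0)=0$ and $Dh_y(0)=0$, and $M=\{y+v+h_y(v)\}$ locally. For $z=y+v+h_y(v)$ we get $P_{N_yM^0}(z-y)=h_y(v)$. Taylor's theorem gives $\|h_y(v)\|\le \tfrac12 \sup\|D^2h_y\|\,\|v\|^2\le \eta\|z-y\|^2$. The main point is uniformity in $y$. I would obtain it by noting that the graph functions come from the implicit function theorem applied to a fixed local defining map $F:\mathbb E\to\mathbb R^{s}$ with $M=F^{-1}(0)$ near $\bar x$ and $DF$ surjective. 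Their second derivatives, and the radius of validity, are therefore bounded uniformly for $y$ in a compact neighborhood. An alternative route avoids graphs. Write $P_{N_yM^0}(z-y)=P_{N_yM^0}\int_0^1\gamma'(t)\,dt$ along a $C^2$ curve $\gamma$ in $M$ from $y$ to $z$ of length $O(\|z-y\|)$. Use $P_{N_yM^0}\gamma'(t)=(P_{N_yM^0}-P_{N_{\gamma(t)}M^0})\gamma'(t)$, since $\gamma'(t)\in T_{\gamma(t)}M^0$. The Lipschitz bound on the projector field then gives the $O(\|z-y\|^2)$ estimate.

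For (2) and (3), set $w=P_M(x)$, so $d_M(x)=\|x-w\|$ and $x-w\in N_wM^0$. Since $y\in M$, we have $\|x-w\|\le\|x-y\|$, and therefore $\|w-y\|\le 2\|x-y\|$. Decompose
\[
x-y=(x-w)+(w-y).
\]
By (1), applied to $y,w$ with $\|w-y\|\le 2\delta$ after shrinking $\delta$, the normal part of $w-y$ at $y$ satisfies $\|P_{N_yM^0}(w-y)\|\le 4\eta\|x-y\|^2$. For the first term, $x-w\in N_wM^0$, so
\[
\bigl\|x-w-P_{N_yM^0}(x-w)\bigr\|=\bigl\|(P_{N_wM^0}-P_{N_yM^0})(x-w)\bigr\|\le \Lambda\|w-y\|\,\|x-w\|\le 2\Lambda\|x-y\|^2 .
\]
Combining these with the triangle inequality gives
\[
\bigl|\,\|P_{N_yM^0}(x-y)\|-d_M(x)\bigr|\le (4\eta+2\Lambda)\|x-y\|^2 .
\]
This yields both (2) and (3) with $G=G'=4\eta+2\Lambda$.

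The main obstacle is the uniformity of the constants in (1), meaning a single $\eta$ and $\delta$ valid for all base points $y$ near $\bar x$. I would settle this first, via the fixed defining map $F$ and compactness, taking care that the closeness conditions ($w$, $y$, and the connecting curve all stay in the region where the bounds hold) are ensured by shrinking $U$ and $\delta$.
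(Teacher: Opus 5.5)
Your proposal is correct. For (1) you use the same uniform graph representation as the paper, which simply cites Absil--Malick for the uniformity that you propose to derive from a fixed defining map and compactness.

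For (2) and (3) your route differs. The paper proves (2) with an explicit competitor on $M$. Taking $u=P_{T_yM^0}(x-y)$ and $w=y+u+v_y(u)\in M$ gives $x-w=P_{N_yM^0}(x-y)-v_y(u)$, so $d_M(x)\le\|x-w\|$ yields the bound without invoking $P_M$ or any regularity of $y\mapsto P_{N_yM^0}$. The paper proves (3) as you do, via $p=P_M(x)$ and (1) applied to $y,p$. However, it only needs $\|P_{N_yM^0}(x-p)\|\le\|x-p\|=d_M(x)$, i.e., nonexpansiveness of an orthogonal projector.

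Your single decomposition $x-y=(x-w)+(w-y)$ with $w=P_M(x)$ instead gives the two-sided estimate $\bigl|\,\|P_{N_yM^0}(x-y)\|-d_M(x)\bigr|\le(4\eta+2\Lambda)\|x-y\|^2$ at once. The Lipschitz bound $\|(P_{N_wM^0}-P_{N_yM^0})(x-w)\|\le\Lambda\|w-y\|\,\|x-w\|$ is exactly what handles direction (2); for (3) it is superfluous. The price is an extra ingredient: single-valuedness of $P_M$ at $x$ and local Lipschitz continuity of the normal-projector field. Both are standard for a $C^2$ manifold, and the paper itself uses the latter in the proof of Theorem~\ref{thm:superlinear-trans}. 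Your curve-based alternative for (1) makes the same trade. Your bookkeeping (applying (1) at radius $2\delta$ and keeping $P_M(U)$ inside the region where the bounds hold) is also more explicit than the paper's.
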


\begin{proof}
By the local $C^2$ geometry of $M$, for each $y\in M$ near $\bar x$ and each $u\in T_yM^0$ small, points of $M$ near $y$ can be represented as $y+u+v_y(u)$ with $v_y(u)\in N_yM^0$. Here $v_y(0)=0$ and $Dv_y(0)=0$; equivalently, after shrinking the neighborhood,
\(
\|v_y(u)\|\le c \|u\|^2
\)
uniformly for $y$ near $\bar x$; see \cite[Lemma~20]{AbsilMalick2012}. Therefore, if $z\in M$ is close to $y$ and $u=P_{T_yM^0}(z-y)$, then
\(
z-y=u+v_y(u),
\)
so
\(
P_{N_yM^0}(z-y)=v_y(u),
\)
which gives \eqref{eq:normal-quadratic-prop}.

Next, for $x$ close to $y$, let $u=P_{T_yM^0}(x-y)$ and set $w:=y+u+v_y(u)\in M$. Then
\(
x-w=P_{N_yM^0}(x-y)-v_y(u),
\) 
and hence
\[
d_M(x)\le \|x-w\|
\le \|P_{N_yM^0}(x-y)\|+c\|u\|^2
\le \|P_{N_yM^0}(x-y)\|+c\|x-y\|^2,
\]
which proves \eqref{eq:distance-normal-upper-prop}.

Finally, let $p:=P_M(x)$. Since the metric projection onto $M$ is single-valued near $M$ \cite[Lemma~2.1]{Lewis2008}, applying \eqref{eq:normal-quadratic-prop} to $y,p\in M$ yields
\(
\|P_{N_yM^0}(p-y)\|\le \eta\|p-y\|^2.
\) 
Thus
\[
\|P_{N_yM^0}(x-y)\|
\le \|P_{N_yM^0}(x-p)\|+\|P_{N_yM^0}(p-y)\|
\le d_M(x)+\eta\|p-y\|^2.
\]
Since $\|p-y\|\le \|p-x\|+\|x-y\|\le 2\|x-y\|$, \eqref{eq:distance-normal-lower-prop} follows.
\end{proof}

\begin{lemma}[Step-size estimate]
  \label{lem:step-size}
Suppose that Assumptions~\ref{ass:smooth} and \ref{ass:intrinsic} hold.
There exist $\delta_3>0$ and $C_1>0$ such that, for all
iterates $x_k\in E\cap B(\bar x,\delta_3)$,
\[
\|x_{k+1}-x_k\|\le C_1\,r(x_k).
\]
\end{lemma}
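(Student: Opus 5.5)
Since $x_{k+1}$ minimizes the strongly convex objective
\[
\Phi_k(x)=\tfrac12\|x-y_k\|^2+\tfrac1{2\mu_k}\|P_{N_{y_k}M^0}(x-y_k)\|^2
\]
over $E$, I will compare it with a well-chosen feasible competitor $z\in E$ and then use strong convexity. The natural competitor is a nearby point of $X$. Let $z:=P_X(x_k)$, or any point of $X$ realizing $d_X(x_k)$; $z$ is well defined locally since $X$ is a $C^2$ embedded submanifold near $\bar x$. Then $z\in E\cap M$. By Lemma~\ref{lem:dist-x}, $\|z-x_k\|=d_X(x_k)\le \kappa' r(x_k)$, and hence
\[
\|z-y_k\|\le \|z-x_k\|+\|x_k-y_k\|\le (\kappa'+1)r(x_k).
\]
For $\delta_3$ small, both $z$ and $y_k$ lie in the neighborhood $U$ of Proposition~\ref{prop:normal-second-order} and $\|z-y_k\|<\delta$. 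Part (1) of that proposition then gives
\[
\|P_{N_{y_k}M^0}(z-y_k)\|\le \eta\|z-y_k\|^2\le \eta(\kappa'+1)^2 r(x_k)^2 .
\]

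Next I bound $\Phi_k(z)$. It satisfies
\[
\Phi_k(z)\le \tfrac12(\kappa'+1)^2r(x_k)^2+\tfrac{1}{2\mu_k}\eta^2(\kappa'+1)^4 r(x_k)^4 .
\]
With $\mu_k=c\,r(x_k)^\rho$ and $\rho\le1$, the second term equals $\tfrac{\eta^2(\kappa'+1)^4}{2c}r(x_k)^{4-\rho}$. Since $4-\rho\ge 3>2$, this is at most a constant times $r(x_k)^2$ once $r(x_k)\le \|x_k-\bar x\|<\delta_3\le 1$. Hence $\Phi_k(z)\le C\,r(x_k)^2$. Since $x_{k+1}$ is the minimizer, $\tfrac12\|x_{k+1}-y_k\|^2\le \Phi_k(x_{k+1})\le\Phi_k(z)$, which gives $\|x_{k+1}-y_k\|\le \sqrt{2C}\,r(x_k)$. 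The triangle inequality then yields
\[
\|x_{k+1}-x_k\|\le (\sqrt{2C}+1)\,r(x_k)=:C_1 r(x_k).
\]

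The only delicate point is making sure all auxiliary points lie where the earlier lemmas apply. Each point must stay within $\delta_2$ for Lemma~\ref{lem:dist-x}, inside $U$, and within distance $\delta$ for Proposition~\ref{prop:normal-second-order}. Local single-valuedness of $P_M$ gives $\|y_k-\bar x\|\le 2\|x_k-\bar x\|$, since $r(x_k)\le\|x_k-\bar x\|$. The same reasoning gives $\|z-\bar x\|\le 2\|x_k-\bar x\|$, since $d_X(x_k)\le \|x_k-\bar x\|$. So choosing $\delta_3$ a small fraction of $\min\{\delta_2,\delta,1\}$ and of the radius of $U$ suffices. The bound does not even need $\rho\le1$ sharply; it only uses that $4-\rho\ge2$. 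Existence of a nearest point in $X$ is harmless: any minimizing point in the closed local piece of $X$ works, or a point with $\|z-x_k\|\le 2d_X(x_k)$ at the cost of constants.
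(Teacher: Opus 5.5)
Your proof is correct and follows the paper's argument: you compare $\Phi_k$ at a point of $X$ that is $O(r(x_k))$-close to $y_k$, bound its normal component quadratically via Proposition~\ref{prop:normal-second-order}(1), absorb the $r(x_k)^{4-\rho}$ term into $r(x_k)^2$, and finish with the triangle inequality. The only difference is the comparison point. You take $P_X(x_k)$ and control it with Lemma~\ref{lem:dist-x}, whereas the paper takes $z_k\in P_X(y_k)$ and uses Lemma~\ref{lem:dist-y} together with $d_E(y_k)\le r(x_k)$; both give $\|z-y_k\|=O(r(x_k))$ equally well.
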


\begin{proof}
Let
\(
r_k:=r(x_k)=d_M(x_k),\ y_k:=P_M(x_k).
\)
Since $x_k\in E$, we have
\(
d_E(y_k)\le \|y_k-x_k\|=r_k.
\)
Hence, by Lemma \ref{lem:dist-y}, after shrinking the neighborhood if necessary, there exists $\kappa>0$ such that
\(
d_X(y_k)\le \kappa\, d_E(y_k)\le \kappa r_k .
\)
Choose $z_k\in P_X(y_k)$. Then
\begin{align}
\|z_k-y_k\|=d_X(y_k)\le \kappa r_k .
\label{eq:5}
\end{align}

Now define the regularized objective
\[
\Phi_k(x):=\frac12\|x-y_k\|^2+\frac{1}{2\mu_k}\|P_{\blue{N_{y_k}M^0}}(x-y_k)\|^2,
\qquad x\in E.
\]
By construction, $x_{k+1}$ minimizes $\Phi_k$ over $E$, so
\begin{align}
\Phi_k(x_{k+1})\le \Phi_k(z_k).
\label{eq:6}
\end{align}

Next we estimate the normal component of $z_k-y_k$. 
Since $M$ is a $C^2$ manifold near $\bar x$ (by Assumption \ref{ass:smooth}), applying \eqref{eq:normal-quadratic-prop} with $y=y_k$ and $z=z_k$ and using \eqref{eq:5}, one has
\begin{align}
\|P_{\blue{N_{y_k}M^0}}(z_k-y_k)\|
\le \eta  \|z_k-y_k\|^2
\le \eta  \kappa^2 r_k^2 .
\label{eq:8}
\end{align}

Therefore,
\begin{align*}
\Phi_k(z_k)
= 
\frac12\|z_k-y_k\|^2
+\frac{1}{2\mu_k}\|P_{\blue{N_{y_k}M^0}}(z_k-y_k)\|^2 
\le
\frac12\kappa^2 r_k^2
+\frac{\eta ^2\kappa^4}{2\mu_k} r_k^4 .
\label{eq:9}
\end{align*}
Using $\mu_k=c\,r_k^\rho$ with $c>0$ and $0\le\rho\le 1$, we get
\begin{align}
\Phi_k(z_k)
\le
\frac12\kappa^2 r_k^2
+\frac{\eta ^2\kappa^4}{2c} r_k^{4-\rho}.
\label{eq:10}
\end{align}
Since $4-\rho\ge 3>2$, after shrinking the neighborhood further we may assume $r_k\le 1$, and hence
\(
\Phi_k(z_k)\le C\, r_k^2
\)
for some constant $C>0$ independent of $k$.

Since the second term in $\Phi_k$ is nonnegative, this bound and \eqref{eq:6} imply
\(
\frac12\|x_{k+1}-y_k\|^2\le \Phi_k(x_{k+1})\le C r_k^2 ,
\) 
and therefore
\begin{align}
\|x_{k+1}-y_k\|\le \sqrt{2C}\, r_k .
\label{eq:14}
\end{align}
Finally, by the triangle inequality,
\begin{align*}
\|x_{k+1}-x_k\|
\le \|x_{k+1}-y_k\|+\|y_k-x_k\| 
\le \sqrt{2C}\, r_k+r_k
= (1+\sqrt{2C})\, r_k .
\end{align*}
Thus the conclusion holds with
\(
C_1:=1+\sqrt{2C}.
\)
This proves the lemma.
\end{proof}

\blue{For $z\in X$ near $\bar x$, set
$W_z:=L\ominus T_zX^0$, $Q_z:=P_{N_zM^0}$, and $A_z:=P_LQ_z|_L$.}

\begin{lemma}[\blue{Uniform coercivity on the effective error space}]
\label{lem:effective-coercivity}
\blue{Suppose that Assumptions~\ref{ass:smooth} and~\ref{ass:intrinsic} hold.
After possibly shrinking the neighborhood of $\bar x$, there exists $\lambda_*>0$
such that
\[
  \langle A_z h,h\rangle=\|Q_zh\|^2\ge \lambda_*\|h\|^2,
  \qquad h\in W_z,
\]
for every $z\in X$ sufficiently close to $\bar x$.}
\end{lemma}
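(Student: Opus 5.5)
The identity $\langle A_z h,h\rangle=\|Q_zh\|^2$ is immediate. For $h\in L$ we have $P_Lh=h$, and $Q_z$ is an orthogonal projector, so
\[
\langle P_LQ_zh,h\rangle=\langle Q_zh,P_Lh\rangle=\langle Q_zh,h\rangle=\|Q_zh\|^2 .
\]
The substance is therefore the uniform lower bound. I would obtain it in two stages: pointwise positivity from the clean-intersection identity, and then uniformity in $z$ by a compactness and continuity argument.

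\emph{Pointwise positivity.} Fix $z\in X$ near $\bar x$ and $h\in W_z$ with $Q_zh=0$. Then $h\in T_zM^0$, and since also $h\in L$, we get $h\in L\cap T_zM^0$. Under Assumptions~\ref{ass:smooth} and~\ref{ass:intrinsic} the intersection is locally clean, as recalled at the start of this subsection, so $L\cap T_zM^0=T_zX^0$. Because $W_z$ is the orthogonal complement of $T_zX^0$ inside $L$, it follows that $h=0$. Hence the quadratic form $h\mapsto\|Q_zh\|^2$ is positive definite on the finite-dimensional space $W_z$. Its minimum on the unit sphere of $W_z$ is some $\lambda(z)>0$.

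\emph{Uniformity.} I would argue by contradiction. Suppose there are $z_k\in X$ with $z_k\to\bar x$, and unit vectors $h_k\in W_{z_k}$ with $\|Q_{z_k}h_k\|\to0$. Pass to a subsequence so that $h_k\to h$ with $\|h\|=1$. Since $M$ is $C^2$, the map $z\mapsto Q_z=P_{N_zM^0}$ is continuous, so $Q_{\bar x}h=0$. Since $X$ is a $C^2$ embedded submanifold of constant dimension near $\bar x$, the map $z\mapsto P_{T_zX^0}$ is continuous on $X$, and hence so is $P_{W_z}=P_L-P_{T_zX^0}$. Therefore $h=\lim P_{W_{z_k}}h_k=P_{W_{\bar x}}h$, so $h\in W_{\bar x}$ is a unit vector with $Q_{\bar x}h=0$. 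This contradicts the pointwise positivity at $z=\bar x$. Consequently $\inf_z\lambda(z)>0$ on a sufficiently small neighborhood of $\bar x$ in $X$, and this infimum is the required $\lambda_*$.

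\emph{Main obstacle.} The only delicate point is continuity of $z\mapsto T_zX^0$, equivalently of $W_z$. Naively, $L\cap T_zM^0$ could drop dimension, and intersections of subspaces are not continuous in general. The clean-intersection conclusion avoids this: it gives $T_zX^0=L\cap T_zM^0$ with $X$ a $C^2$ manifold, so $\dim T_zX^0$ is locally constant and the tangent bundle of $X$ varies continuously. This is exactly where clean intersection, which is equivalent to intrinsic transversality here by \cite[Theorem~5.1]{yang2026optimizationintersectionmanifolds}, enters. Transversality is not needed.
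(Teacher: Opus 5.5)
Your proposal is correct and follows essentially the same route as the paper. Both arguments proceed in three steps:
\begin{enumerate}
\item the identity $\langle A_zh,h\rangle=\|Q_zh\|^2$;
\item pointwise definiteness on $W_z$, from $L\cap T_zM^0=T_zX^0$ under the clean intersection implied by intrinsic transversality;
\item uniformity by contradiction: take $z_j\to\bar x$ and unit $h_j\in W_{z_j}$, then use continuity of $z\mapsto Q_z$ and of the tangent spaces of $X$ to produce a unit $h\in W_{\bar x}$ with $Q_{\bar x}h=0$.
\end{enumerate}
Your use of the continuity of $P_{W_z}=P_L-P_{T_zX^0}$ is just a slightly more explicit form of the paper's step, where $h_j\perp T_{z_j}X^0$ passes to the limit $h\perp T_{\bar x}X^0$.
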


\begin{proof}
\blue{
Assumptions~\ref{ass:smooth} and~\ref{ass:intrinsic} imply local clean
intersection. Hence, after possibly shrinking the neighborhood of $\bar x$,
$X$ is a $C^2$ embedded submanifold and
\(
T_zX^0=L\cap T_zM^0
\)
for every $z\in X$ in this neighborhood. Recall that
$Q_z=P_{N_zM^0}$ and $A_z=P_LQ_z|_L$. For $h\in L$,
\[
\langle A_zh,h\rangle
=\langle Q_zh,h\rangle
=\|Q_zh\|^2,
\]
so $A_z$ is self-adjoint and positive semidefinite on $L$. Moreover,
\[
\ker A_z
=\{h\in L:Q_zh=0\}
=L\cap T_zM^0
=T_zX^0.
\]
Thus $A_z$ is positive definite on
$W_z=L\ominus T_zX^0$ for each fixed $z$.

It remains to show that the lower bound is uniform in $z$.
Suppose otherwise. Then there exist $z_j\in X$ with
$z_j\to\bar x$ and unit vectors $h_j\in W_{z_j}$ such that
\(
\|Q_{z_j}h_j\|\to0.
\)
Passing to a subsequence, we may assume that $h_j\to h$.
Since $h_j\in L$ and $\|h_j\|=1$, we have $h\in L$ and
$\|h\|=1$. As $X$ is a $C^2$ embedded submanifold, its tangent
spaces vary continuously; since $h_j\perp T_{z_j}X^0$, it follows that
\(
h\perp T_{\bar x}X^0.
\)

Likewise, continuity of the normal projector $z\mapsto Q_z$ gives
\[
Q_{\bar x}h
=\lim_{j\to\infty}Q_{z_j}h_j
=0.
\]
Hence $h\in T_{\bar x}M^0$. Since also $h\in L$, clean intersection yields
\(
h\in L\cap T_{\bar x}M^0=T_{\bar x}X^0,
\)
contradicting $h\perp T_{\bar x}X^0$ and $\|h\|=1$.
Therefore there exists $\lambda_*>0$, independent of $z$ sufficiently
close to $\bar x$, such that
\[
\|Q_zh\|^2\ge \lambda_*\|h\|^2,
\qquad h\in W_z.
\]
Together with
$\langle A_zh,h\rangle=\|Q_zh\|^2$, this proves the claim.
}
\end{proof}

\blue{For $\mu>0$ and $z\in X$ near $\bar x$, define the effective-error
operator}
\begin{equation}
\blue{S_z(\mu):=\mu(A_z+\mu I)^{-1}(I-A_z)\big|_{W_z}.}
\label{eq:effective-error-operator}
\end{equation}
\blue{Since $A_z|_{W_z}$ is self-adjoint and
$\lambda_*I\preceq A_z|_{W_z}\preceq I$, spectral calculus gives the uniform
bound}
\begin{equation}
\blue{\|S_z(\mu)\|\le\vartheta(\mu)
:=\max_{\lambda\in[\lambda_*,1]}
\frac{\mu(1-\lambda)}{\mu+\lambda}
=\frac{\mu(1-\lambda_*)}{\mu+\lambda_*}<1.}
\label{eq:effective-error-contraction}
\end{equation}

\begin{theorem}[\blue{Local convergence of RN-SLRA}]
\label{thm:superlinear-trans}
\blue{Suppose that Assumptions~\ref{ass:smooth} and~\ref{ass:intrinsic} hold.
Let RN-SLRA use \(\mu_k=c r_k^\rho\), where \(r_k:=d_M(x_k)\), \(c>0\),
and \(0\le\rho\le1\). Then there exist neighborhoods \(U_0\subset U\) of
\(\bar x\) and a constant \(C>0\) such that, for every
\(x_0\in E\cap U_0\), the iterates remain in \(U\) and converge to a point
\(x_\infty\in X\). Moreover, for every nonterminal iteration,}
\begin{equation}
\blue{d_X(x_{k+1})\le
\vartheta(\mu_k)d_X(x_k)+C d_X(x_k)^2.}
\label{eq:55}
\end{equation}
\begin{enumerate}
\item \blue{If $\rho=0$, then $d_X(x_k)$ converges Q-linearly, while the
residuals and iterates converge R-linearly. In particular, the sequence
converges locally linearly for every fixed $c>0$.}
\item \blue{If $0<\rho\le1$, then}
\begin{align}
\blue{r_{k+1}}&\blue{\le C r_k^{1+\rho},\qquad
d_X(x_{k+1})\le C d_X(x_k)^{1+\rho},}
\label{eq:rn-higher-order}\\
\blue{\|x_{k+1}-x_\infty\|}
&\blue{\le C d_X(x_k)^{1+\rho}
\le C\|x_k-x_\infty\|^{1+\rho}.}
\label{eq:rn-iterate-order}
\end{align}
\blue{Thus the residual, the distance to $X$, and the iterates converge with
Q-order at least $1+\rho$.}
\end{enumerate}
\end{theorem}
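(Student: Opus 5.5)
The plan is to linearize the RN-SLRA step around the nearest point of the solution manifold. The resulting error recursion is driven by the operator $S_z(\mu)$ of \eqref{eq:effective-error-operator}, plus quadratic remainders. After that, the two regimes for $\rho$ follow by a standard induction.

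\textbf{Step 1: optimality condition.} Fix a nonterminal iterate $x_k\in E$ near $\bar x$ and set $r_k=d_M(x_k)$, $y_k=P_M(x_k)$, $\mu_k=c r_k^\rho$. Choose $z\in P_X(x_k)$ and write $e_k:=x_k-z\in L$, so that $d_X(x_k)=\|e_k\|$. The first-order condition of the subproblem \eqref{eq:rnslra} over $E=z+L$ reads
\[
P_L\bigl[(x_{k+1}-y_k)+\mu_k^{-1}Q_{y_k}(x_{k+1}-y_k)\bigr]=0 ,
\]
where $Q_{y_k}:=P_{N_{y_k}M^0}$.

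\textbf{Step 2: local error equation.} Put $h:=x_{k+1}-z\in L$ and $g:=y_k-z$. By Lemma~\ref{lem:dist-x} and Lemma~\ref{lem:step-size}, $\|e_k\|\asymp r_k$ and $\|h\|=O(r_k)$. I will use three facts.
\begin{itemize}
\item Since $x_k-y_k\in N_{y_k}M^0$, we have $Q_{y_k}g=Q_{y_k}(x_k-z)-(x_k-y_k)$.
\item By \eqref{eq:normal-quadratic-prop} with $z\in M$, $Q_{y_k}(y_k-z)=O(r_k^2)$, hence $x_k-y_k=Q_{y_k}e_k+O(r_k^2)$.
\item By $C^2$ smoothness of $M$, $\|Q_{y_k}-Q_z\|=O(r_k)$.
\end{itemize}
Together these give $g=(I-Q_z)e_k+O(r_k^2)$ and $Q_{y_k}(h-g)=Q_zh+O(r_k^2)$. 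Substituting into Step 1 and multiplying by $\mu_k$:
\[
(\mu_k I+A_z)h=\mu_k P_L(I-Q_z)e_k+O(r_k^2).
\]
Here the remainder is not divided by $\mu_k$, because the $\mu_k^{-1}$ term was multiplied through. Hence
\[
h=\mu_k(A_z+\mu_k I)^{-1}(I-A_z)e_k+(A_z+\mu_kI)^{-1}O(r_k^2).
\]

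\textbf{Step 3: split into tangent and effective parts.} Decompose $h=h_T+h_W$ with $h_T\in T_zX^0$ and $h_W\in W_z$. Because $z=P_X(x_k)$, we have $e_k\in W_z$. Also $A_z$ is self-adjoint on $L$ with kernel $T_zX^0$, so it preserves $W_z$. On $W_z$, Lemma~\ref{lem:effective-coercivity} gives $\|(A_z+\mu I)^{-1}\|\le\lambda_*^{-1}$. Therefore
\[
h_W=S_z(\mu_k)e_k+O(r_k^2).
\]
The main obstacle is the tangential component: $(A_z+\mu_k)^{-1}$ on $T_zX^0$ equals $\mu_k^{-1}$, which is large. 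I would get around this without inverting. Project the optimality condition onto $T_zX^0$. There $P_{T_zX^0}Q_z=0$, and $P_{T_zX^0}Q_{y_k}=O(r_k)$, applied to a vector $Q_{y_k}(\cdot)$ of size $O(r_k)$. This gives $h_T=P_{T_zX^0}g+O(r_k^2/\mu_k)$.

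At the same time, $h_T$ only needs to be $O(r_k)$; Lemma~\ref{lem:step-size} gives this directly. Tangential motion of size $O(r_k)$ changes the distance to $X$ only by $O(r_k^2)$. Concretely, $d_X(z+h)\le\|h_W\|+C\|h\|^2$, since $X$ is a $C^2$ submanifold and $z+h_T$ lies within $O(\|h_T\|^2)$ of $X$. Combining with $\|S_z(\mu)\|\le\vartheta(\mu)$ from \eqref{eq:effective-error-contraction} and $r_k\le d_X(x_k)$ yields \eqref{eq:55}.

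\textbf{Step 4: the two regimes and convergence.}

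\emph{Case $\rho=0$.} Here $\vartheta(c)<1$ is a constant. Choose $U_0$ small enough that $\vartheta(c)+Cd_X(x_0)=:\theta<1$. By induction, $d_X(x_k)\le\theta^k d_X(x_0)$, and Lemma~\ref{lem:dist-x} gives $r_k\le d_X(x_k)$. By Lemma~\ref{lem:step-size}, $\sum\|x_{k+1}-x_k\|\le C_1\sum r_k$ is geometric. This keeps the iterates in $U$, since $U_0$ is chosen so that $\|x_0-\bar x\|+C_1\sum_k r_k$ stays within $U$. It also shows that $(x_k)$ is Cauchy with an R-linear tail, and its limit lies in $X$ since $d_X\to0$.

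\emph{Case $0<\rho\le1$.} Here $\vartheta(\mu_k)\le\mu_k/\lambda_*=(c/\lambda_*)r_k^\rho\le C d_X(x_k)^\rho$. With $d_X\le1$ and $\rho\le1$, this gives $d_X(x_{k+1})\le C d_X(x_k)^{1+\rho}$. Then $r_{k+1}\le d_X(x_{k+1})\le Cd_X(x_k)^{1+\rho}\le C'\kappa'^{1+\rho}r_k^{1+\rho}$, using Lemma~\ref{lem:dist-x}.

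For the iterates, $\|x_{k+1}-x_\infty\|\le C_1\sum_{j>k}r_j$. This is dominated by its first term, so it is bounded by $C d_X(x_{k+1})\le Cd_X(x_k)^{1+\rho}$. Finally, $d_X(x_k)\le\|x_k-x_\infty\|$ because $x_\infty\in X$, which gives \eqref{eq:rn-iterate-order}.
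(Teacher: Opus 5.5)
Your proposal is correct and follows essentially the same route as the paper. You linearize at $z=P_X(x_k)$ and derive $(A_z+\mu_kI)h=\mu_k(I-A_z)e_k+O(r_k^2)$ from the $\mu_k$-scaled optimality condition. You then use the coercivity of Lemma~\ref{lem:effective-coercivity} on $W_z$ to get $h_W=S_z(\mu_k)e_k+O(r_k^2)$, treat the tangential part as a second-order effect on $d_X$, and close with the same invariance and geometric-tail induction. Your derivation of $y_k-z=(I-Q_z)e_k+O(r_k^2)$ from \eqref{eq:normal-quadratic-prop} and the Lipschitz continuity of $y\mapsto P_{N_yM^0}$ is a clean, self-contained justification of the paper's expansion \eqref{eq:metric-projection-expansion} using only $C^2$ smoothness. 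Two minor slips: $r_k\le d_X(x_k)$ comes from $X\subset M$, not from Lemma~\ref{lem:dist-x}; and for $0<\rho\le1$ you should state explicitly that $d_{k+1}\le (Cd_k^\rho)\,d_k\le\theta d_k$ near $\bar x$ before invoking the invariance argument.
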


\begin{proof}
\blue{Write \(d_k:=d_X(x_k)\), \(z_k:=P_X(x_k)\), and
\(y_k:=P_M(x_k)\). The local projection \(P_X\) is single-valued after
shrinking the neighborhood. Since \(x_k,z_k\in E\), metric-projection
orthogonality gives}
\[
\blue{x_k-z_k\in L\cap(T_{z_k}X^0)^\perp=W_{z_k}.}
\]
\blue{Moreover, \(r_k\le d_k\) because \(X\subset M\), and hence}
\begin{equation}
\blue{\|z_k-y_k\|\le d_k+r_k\le2d_k.}
\label{eq:rn-yz-distance}
\end{equation}
\blue{Lemma~\ref{lem:step-size} and the local error bound
\(d_k\le\kappa' r_k\) from Lemma~\ref{lem:dist-x} yield}
\[
\blue{\|x_{k+1}-z_k\|
\le\|x_{k+1}-x_k\|+d_k\le C d_k.}
\]
\blue{The orthogonal decomposition \(L=T_{z_k}X^0\oplus W_{z_k}\) therefore
gives uniquely}
\begin{equation}
\blue{x_{k+1}-z_k=\tau_k+h_k,\qquad
\tau_k\in T_{z_k}X^0,\quad h_k\in W_{z_k},\quad
\|\tau_k\|+\|h_k\|\le C d_k.}
\label{eq:rn-effective-decomposition}
\end{equation}

\blue{Set \(Q_k:=P_{N_{y_k}M^0}\). Multiplying the first-order optimality
condition by \(\mu_k\) gives}
\begin{equation}
\blue{P_L\!\left[\mu_k(x_{k+1}-y_k)+Q_k(x_{k+1}-y_k)\right]=0.}
\label{eq:rn-projected-optimality}
\end{equation}
\blue{Projecting \eqref{eq:rn-projected-optimality} onto \(W_{z_k}\), using
\(P_{W_{z_k}}\tau_k=0\), and adding and subtracting \(Q_{z_k}h_k\) yields}
\begin{equation}
\blue{(A_{z_k}+\mu_kI)h_k
=-\mu_kP_{W_{z_k}}(z_k-y_k)+R_k,}
\label{eq:rn-effective-equation}
\end{equation}
\blue{where}
\begin{equation}
\blue{\begin{aligned}
R_k={}&-P_{W_{z_k}}Q_k(z_k-y_k)
      -P_{W_{z_k}}Q_k\tau_k\\
     &-P_{W_{z_k}}(Q_k-Q_{z_k})h_k .
\end{aligned}}
\label{eq:rn-remainder}
\end{equation}
\blue{Each term is controlled directly. First, \(y_k,z_k\in M\), so
Proposition~\ref{prop:normal-second-order} and
\eqref{eq:rn-yz-distance} give}
\[
\blue{\|Q_k(z_k-y_k)\|\le C\|z_k-y_k\|^2\le Cd_k^2.}
\]
\blue{Second, \(\tau_k\in T_{z_k}X^0\subset T_{z_k}M^0=\ker Q_{z_k}\).
For a \(C^2\) embedded manifold, the map
\(y\mapsto P_{N_yM^0}\) is locally Lipschitz. Thus}
\[
\blue{\|Q_k\tau_k\|
=\|(Q_k-Q_{z_k})\tau_k\|
\le C\|y_k-z_k\|\,\|\tau_k\|\le Cd_k^2.}
\]
\blue{The same Lipschitz estimate gives}
\[
\blue{\|P_{W_{z_k}}(Q_k-Q_{z_k})h_k\|
\le Cd_k\|h_k\|.}
\]
\blue{Consequently,
\(\|R_k\|\le C(d_k^2+d_k\|h_k\|)\). By
Lemma~\ref{lem:effective-coercivity},}
\[
\blue{\|(A_{z_k}+\mu_kI)^{-1}\|_{W_{z_k}\to W_{z_k}}
\le\lambda_*^{-1}.}
\]
\blue{Equation~\eqref{eq:rn-effective-equation} now yields an expansion
that retains its exact linear part. Indeed,
$DP_M(z_k)=P_{T_{z_k}M^0}$, and the local $C^2$ manifold geometry gives the
following first-order expansion with a uniform second-order remainder:}
\begin{equation}
\blue{
y_k-z_k
=P_{T_{z_k}M^0}(x_k-z_k)+O(d_k^2)
=(I-Q_{z_k})e_k+O(d_k^2),
}
\label{eq:metric-projection-expansion}
\end{equation}
\blue{where $e_k=x_k-z_k\in W_{z_k}$. Moreover, self-adjointness of
$A_{z_k}$ together with $\ker A_{z_k}=T_{z_k}X^0$ implies
$A_{z_k}W_{z_k}\subset W_{z_k}$, and hence}
\[
\blue{
P_{W_{z_k}}(I-Q_{z_k})e_k=(I-A_{z_k})e_k.
}
\]
\blue{Substituting into \eqref{eq:rn-effective-equation} yields}
\[
\blue{
(A_{z_k}+\mu_kI)h_k
=\mu_k(I-A_{z_k})e_k+O(\mu_kd_k^2)+R_k.
}
\]
\blue{For $r_k\le1$ and $0\le\rho\le1$, we have $\mu_k\le c$, including
$\mu_k=c$ when $\rho=0$. Using the uniform inverse bound above,
$\|e_k\|=d_k$, and
$\|R_k\|\le C(d_k^2+d_k\|h_k\|)$, we obtain, after shrinking the
neighborhood if necessary,}
\[
\blue{
\|h_k\|
\le C\bigl(d_k+d_k^2+d_k\|h_k\|\bigr)
\le C\bigl(d_k+d_k\|h_k\|\bigr).
}
\]
\blue{Absorbing the last term gives $\|h_k\|=O(d_k)$, and hence
$\|R_k\|=O(d_k^2)$. Since also $O(\mu_kd_k^2)=O(d_k^2)$, applying
$(A_{z_k}+\mu_kI)^{-1}$ and recalling the definition of
$S_{z_k}(\mu_k)$ gives}
\begin{equation}
\blue{
h_k=S_{z_k}(\mu_k)e_k+O(d_k^2).
}
\label{eq:rn-effective-expansion}
\end{equation}

\blue{Uniformly for \(z\in X\) near \(\bar x\), the submanifold
\(X\subset E\) has the local graph representation}
\[
\blue{u\longmapsto z+u+\varphi_z(u),\qquad
u\in T_zX^0,\qquad
\varphi_z(u)\in W_z,\qquad
\|\varphi_z(u)\|\le C\|u\|^2.}
\]
\blue{Taking \(z=z_k\) and \(u=\tau_k\), the resulting point belongs to
\(X\), so \eqref{eq:rn-effective-decomposition},
\eqref{eq:rn-effective-expansion}, and
\eqref{eq:effective-error-contraction} give}
\[
\blue{d_X(x_{k+1})
\le\|h_k-\varphi_{z_k}(\tau_k)\|
\le\vartheta(\mu_k)d_k+Cd_k^2,}
\]
\blue{which proves \eqref{eq:55}. If $\rho=0$, then $\mu_k=c$ and
$\vartheta(c)<1$ for every fixed $c>0$. Choose
$\theta\in(\vartheta(c),1)$ and shrink the neighborhood so that
$Cd_k\le\theta-\vartheta(c)$. Then $d_{k+1}\le\theta d_k$. If
$0<\rho\le1$, then}
\[
\blue{\vartheta(\mu_k)\le\frac{1-\lambda_*}{\lambda_*}\mu_k
=O(r_k^\rho)=O(d_k^\rho).}
\]
\blue{Since $r_k\le d_k\le\kappa' r_k$ and
$d_k^2\le d_k^{1+\rho}$ locally, \eqref{eq:55} gives}
\[
\blue{d_{k+1}\le Cd_k^{1+\rho},\qquad
r_{k+1}\le d_{k+1}\le Cr_k^{1+\rho},}
\]
\blue{which is \eqref{eq:rn-higher-order}.}

\blue{It remains to close the local argument and establish the limit. In either
parameter regime, choose a ball
\(U:=B(\bar x,\delta)\) sufficiently small that all preceding estimates hold
on \(U\) and there is a uniform \(\theta\in(0,1)\) such that
\(d_{k+1}\le\theta d_k\). Since
\(d_0\le\|x_0-\bar x\|\), choose a smaller neighborhood
\(U_0\subset U\) of \(\bar x\) such that every \(x_0\in E\cap U_0\) satisfies}
\begin{equation}
\blue{\|x_0-\bar x\|+\frac{C_1}{1-\theta}d_0<\delta,}
\label{eq:rn-invariance-radius}
\end{equation}
\blue{where \(C_1\) is the constant in Lemma~\ref{lem:step-size}. Induction
using \eqref{eq:rn-invariance-radius} gives}
\[
\blue{d_k\le\theta^k d_0,\qquad
\|x_k-\bar x\|
\le\|x_0-\bar x\|+C_1\sum_{j=0}^{k-1}d_j<\delta.}
\]
\blue{Thus every iterate remains in the local neighborhood. The step-size
estimate makes \(\sum_k\|x_{k+1}-x_k\|\) finite, so \(x_k\) converges to
some \(x_\infty\in E\). Since \(r_k\le d_k\to0\) and both \(E\) and
\(M\) are locally closed, \(x_\infty\in X\). For $\rho=0$, the bounds
$r_k\le d_k\le\theta^k d_0$ and}
\[
\blue{\|x_k-x_\infty\|\le C\sum_{j=k}^{\infty}d_j
\le C\theta^k d_0}
\]
\blue{prove R-linear convergence of the residuals and iterates, while
$d_{k+1}\le\theta d_k$ is Q-linear. For $0<\rho\le1$, the geometric tail and
the higher-order distance recurrence yield}
\[
\blue{\begin{aligned}
\|x_{k+1}-x_\infty\|
&\le\sum_{j=k+1}^{\infty}\|x_{j+1}-x_j\|
 \le C_1\sum_{j=k+1}^{\infty}d_j\\
&\le\frac{C_1}{1-\theta}d_{k+1}
 \le Cd_k^{1+\rho}
 \le C\|x_k-x_\infty\|^{1+\rho},
\end{aligned}}
\]
\blue{where the last inequality uses
\(d_k=d_X(x_k)\le\|x_k-x_\infty\|\). If a zero residual occurs, the
algorithm terminates at a point of \(X\), and the conclusions hold after
extending the sequence constantly.}
\end{proof}

\begin{corollary}[Quadratic convergence for \(\rho=1\)]
\label{cor:quadratic-trans}
\blue{Under the assumptions of Theorem~\ref{thm:superlinear-trans}, if
\(\mu_k=cr_k\), then the residual, the distance to \(X\), and the iterates
converge locally Q-quadratically.}
\end{corollary}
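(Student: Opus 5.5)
This is a direct specialization of Theorem~\ref{thm:superlinear-trans} to $\rho=1$. I will take the three higher-order estimates from that theorem and translate each into the definition of Q-quadratic convergence.

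Setting $\rho=1$ in \eqref{eq:rn-higher-order} gives $r_{k+1}\le C r_k^2$ and $d_X(x_{k+1})\le C d_X(x_k)^2$ for every nonterminal iteration, with $x_0\in E\cap U_0$. Setting $\rho=1$ in \eqref{eq:rn-iterate-order} gives $\|x_{k+1}-x_\infty\|\le C\|x_k-x_\infty\|^2$.

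Together with the convergence $x_k\to x_\infty\in X$ established in the theorem, these are exactly Q-quadratic convergence:
\begin{itemize}
\item for the residual sequence $r_k\to0$;
\item for the distance sequence $d_X(x_k)\to0$;
\item for the iterate errors $\|x_k-x_\infty\|\to0$.
\end{itemize}
Each of these sequences converges to zero because $r_k\le d_X(x_k)\le\theta^k d_0$.

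The only point needing care is finite termination. If $r_k=0$ for some $k$, then $x_k\in X$, and the theorem's convention of extending the sequence constantly keeps all three inequalities valid, trivially, as $0\le 0$. The constant $C$ and the neighborhood $U_0$ are those of Theorem~\ref{thm:superlinear-trans}, so nothing needs to be shrunk further. There is no real obstacle: the entire content is already in \eqref{eq:rn-higher-order}--\eqref{eq:rn-iterate-order}. The corollary only records that the exponent $1+\rho$ equals $2$ when $\mu_k=c\,r_k$.
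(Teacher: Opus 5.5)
Your proposal is correct and matches the paper's own proof, which likewise obtains the corollary simply as the case \(\rho=1\) of Theorem~\ref{thm:superlinear-trans}. Your additional remarks on finite termination and on each sequence tending to zero are consistent with the theorem's conclusions and proof.
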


\begin{proof}
\blue{This is the case \(\rho=1\) of
Theorem~\ref{thm:superlinear-trans}.}
\end{proof}

\begin{theorem}[\blue{Deviation of the RN-SLRA limit from the metric projection}]
\label{thm:lim-point-rn}
\blue{Under the assumptions of Theorem~\ref{thm:superlinear-trans}, there
exists \(\gamma>0\) such that, for every sufficiently close
\(x_0\in E\),}
\begin{equation}
\|x_\infty(x_0)-P_X(x_0)\|
\le\gamma\|x_0-P_X(x_0)\|.
\label{eq:rn_x_inf}
\end{equation}
\end{theorem}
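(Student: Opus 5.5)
The plan is a triangle inequality combined with the geometric tail of the step-size bounds from Theorem~\ref{thm:superlinear-trans}. Write $d_k:=d_X(x_k)$ and $z_0:=P_X(x_0)$, so that $d_0=\|x_0-P_X(x_0)\|$. Then
\[
\|x_\infty(x_0)-P_X(x_0)\|\le \|x_\infty-x_0\|+\|x_0-z_0\| = \|x_\infty-x_0\|+d_0 .
\]
It therefore suffices to bound $\|x_\infty-x_0\|$ by a constant multiple of $d_0$.

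The constant comes from the local closing argument in the proof of Theorem~\ref{thm:superlinear-trans}. That proof produces a neighborhood $U_0$ and a uniform $\theta\in(0,1)$ with $d_{k+1}\le\theta d_k$ for every $x_0\in E\cap U_0$, in both regimes $\rho=0$ and $0<\rho\le1$. Since all iterates stay in $U$, Lemma~\ref{lem:step-size} gives $\|x_{k+1}-x_k\|\le C_1 r_k$, and $r_k\le d_k$ because $X\subset M$. Summing the telescoping series yields
\[
\|x_\infty-x_0\|\le \sum_{k\ge0}\|x_{k+1}-x_k\|\le C_1\sum_{k\ge0} d_k\le \frac{C_1}{1-\theta}\,d_0 .
\]
Hence $\gamma:=1+C_1/(1-\theta)$ works. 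If the iteration terminates at a point of $X$ because the residual vanishes, extend the sequence constantly; the sum is then finite and the bound is unchanged.

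The only point requiring care is uniformity. Both $\theta$ and $C_1$ must be independent of $x_0$, and the contraction $d_{k+1}\le\theta d_k$ must hold along the entire trajectory, not just asymptotically. In the regime $0<\rho\le1$ the proof obtains this by shrinking the neighborhood so that $\vartheta(\mu_k)+Cd_k\le\theta$ uniformly; here $\mu_k\le c\,r_k^\rho$ is small because $d_k<\delta$. So the main step is to confirm that this shrinking, together with the invariance condition \eqref{eq:rn-invariance-radius}, is already part of Theorem~\ref{thm:superlinear-trans}. It is, so $\gamma$ depends only on $C_1$ and $\theta$.

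One further check is that $P_X(x_0)$ is single-valued on $U_0$. This holds because $X$ is locally a $C^2$ embedded submanifold under clean intersection, which follows from Assumption~\ref{ass:intrinsic}.
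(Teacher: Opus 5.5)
Your proposal is correct and follows the paper's proof essentially line for line. It uses the triangle inequality through $x_0$, the step bound $\|x_{k+1}-x_k\|\le C_1 r_k\le C_1 d_k$ from Lemma~\ref{lem:step-size}, and the uniform decay $d_k\le\theta^k d_0$ from the closing argument of Theorem~\ref{thm:superlinear-trans}, which give $\gamma=1+C_1/(1-\theta)$ exactly as the paper does; your extra remarks on uniformity, finite termination, and single-valuedness of $P_X$ are consistent with it.
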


\begin{proof}
\blue{Let \(p:=P_X(x_0)\). The proof of
Theorem~\ref{thm:superlinear-trans} provides a uniform
\(\theta\in(0,1)\) such that \(d_k\le\theta^kd_0\), while
Lemma~\ref{lem:step-size} gives
\(\|x_{k+1}-x_k\|\le C_1r_k\le C_1d_k\). Since
\(d_0=\|x_0-p\|\), it follows that}
\[
\blue{\begin{aligned}
\|x_\infty-p\|
&\le\|x_0-p\|+\sum_{k=0}^{\infty}\|x_{k+1}-x_k\|\\
&\le\|x_0-p\|+C_1\sum_{k=0}^{\infty}d_k
\le\left(1+\frac{C_1}{1-\theta}\right)\|x_0-p\|.
\end{aligned}}
\]
\blue{This proves \eqref{eq:rn_x_inf}.}
\end{proof}

\begin{remark}[First-order retraction-like interpretation under clean intersection]
\label{rmk:retraction-clean-intersection}
\blue{Following the retraction viewpoint for locally convergent alternating-projection-type methods developed in \cite{chen2026retractionsalternatingprojections}, we interpret the RN-SLRA limit map as follows.}
For \(x\in X\) near \(\bar x\) and
\(\blue{\xi\in T_xX^0}\) sufficiently small, define
\(R_x(\xi):=x_\infty(x+\xi)\). Since \(X\) is a \(C^2\) embedded
submanifold,
\[
P_X(x+\xi)=x+\xi+O(\|\xi\|^2),\qquad
d_X(x+\xi)=O(\|\xi\|^2).
\]
\blue{Theorem~\ref{thm:lim-point-rn} then gives
\(\|R_x(\xi)-P_X(x+\xi)\|=O(\|\xi\|^2)\), and hence
\(R_x(\xi)=x+\xi+O(\|\xi\|^2)\). This is a first-order
retraction-like expansion.} \blue{For fixed regularization ($\rho=0$),
Appendix~\ref{app:rho-zero-retraction} further shows that the limit map defines
a genuine retraction and, under additional smoothness, a second-order
retraction.}
\end{remark}
\color{black}
\subsection{\texorpdfstring{\blue{Local Convergence of iRN-SLRA under Intrinsic Transversality}}{Local Convergence of iRN-SLRA under Intrinsic Transversality}}

\leavevmode\blue{We next show that the effective-error-space argument extends to
quasioptimal manifold projections. For every fixed $\sigma\ge1$ and $0<\rho\le1$, quasioptimality changes
only the local constants and does not alter the order $1+\rho$. The
fixed-regularization case requires the additional contraction condition
$q\sigma<1$ and is treated separately in Proposition~\ref{prop:irn-fixed}.}

Throughout this subsection, let
\[
\widetilde r_k:=\|x_k-y_k\|,\qquad
r_k:=d_M(x_k),\qquad
\mu_k:=c\widetilde r_k^\rho,\quad c>0,\quad0\le\rho\le1.
\]

\begin{lemma}[\blue{Equivalence of exact and quasioptimal residuals}]
\label{lem:inexact-residual-equivalence}
If \(y_k\in M\) satisfies
\(\|x_k-y_k\|\le\sigma d_M(x_k)\) for a fixed \(\sigma\ge1\), then
\[
r_k\le\widetilde r_k\le\sigma r_k.
\]
\blue{If $\rho=0$, then $\mu_k=c$;} if $0<\rho\le1$, then
\[
cr_k^\rho\le\mu_k\le c\sigma^\rho r_k^\rho.
\]
\end{lemma}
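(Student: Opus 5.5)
The argument is elementary and uses only the definition of the distance function together with monotonicity of $t\mapsto t^\rho$.

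For the lower bound $r_k\le\widetilde r_k$, we use $y_k\in M$. Since $d_M(x_k)=\inf_{y\in M}\|x_k-y\|$, we get $r_k=d_M(x_k)\le\|x_k-y_k\|=\widetilde r_k$. The upper bound $\widetilde r_k\le\sigma r_k$ is exactly the quasioptimality hypothesis $\|x_k-y_k\|\le\sigma d_M(x_k)$. Together these give $r_k\le\widetilde r_k\le\sigma r_k$.

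For the regularization parameter we split into cases. If $\rho=0$, then $\widetilde r_k^{\,0}=1$, so $\mu_k=c$ directly. We interpret $0^0=1$ consistently with the RN-SLRA convention; nonterminal iterations have $\widetilde r_k>0$ anyway. If $0<\rho\le1$, the map $t\mapsto t^\rho$ is nondecreasing on $[0,\infty)$. Applying it to $r_k\le\widetilde r_k\le\sigma r_k$ gives $r_k^\rho\le\widetilde r_k^{\,\rho}\le\sigma^\rho r_k^\rho$. Multiplying by $c>0$ yields $cr_k^\rho\le\mu_k\le c\sigma^\rho r_k^\rho$.

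There is no real obstacle. The only point requiring care is to note that the first inequality relies on $y_k\in M$, not on any optimality property, so it holds for any quasioptimal selection.
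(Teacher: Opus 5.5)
Your proposal is correct and follows the same route as the paper's proof. Both use the definition of $d_M$ for $r_k\le\widetilde r_k$, take the upper bound directly from quasioptimality, raise the bounds to the power $\rho>0$ via monotonicity, and adopt the convention $\widetilde r_k^{\,0}=1$ to get $\mu_k=c$ when $\rho=0$.
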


\begin{proof}
\blue{
The first lower bound follows from the definition of \(d_M\), and the
upper bound is the quasioptimality condition. The bounds for \(\mu_k\) follow
by raising the residual bounds to the positive power $\rho$\blue{; for $\rho=0$,
the convention $\widetilde r_k^0=1$ gives $\mu_k=c$}.
}
\end{proof}

\begin{lemma}[Step-size estimate for iRN-SLRA]
\label{lem:inexact-step-size}
Suppose that Assumptions~\ref{ass:smooth} and~\ref{ass:intrinsic} hold.
\blue{There exist \(\delta>0\) and \(C_{\rm i}>0\) such that every 
iRN-SLRA iterate \(x_k\in E\cap B(\bar x,\delta)\) satisfies}
\[
\blue{\|x_{k+1}-y_k\|\le C_{\rm i}\widetilde r_k,\qquad
\|x_{k+1}-x_k\|\le(C_{\rm i}+1)\widetilde r_k.}
\]
\end{lemma}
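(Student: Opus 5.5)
The argument follows the proof of Lemma~\ref{lem:step-size} almost verbatim. The exact projection is replaced by the quasioptimal point $y_k$, and the exact residual $r_k$ is replaced by $\widetilde r_k$ wherever it enters. The plan is to compare the value of the regularized objective
\[
\Phi_k(x):=\frac12\|x-y_k\|^2+\frac{1}{2\mu_k}\|P_{N_{y_k}M^0}(x-y_k)\|^2,\qquad x\in E,
\]
at its minimizer $x_{k+1}$ with its value at a competitor in $X\subset E$ close to $y_k$.

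\textbf{Step 1: localize $y_k$.} Since $\|y_k-x_k\|\le\sigma d_M(x_k)\le\sigma\|x_k-\bar x\|$, we get $\|y_k-\bar x\|\le(1+\sigma)\|x_k-\bar x\|$. Choosing $\delta$ small therefore places $y_k$ in the neighborhoods where Lemma~\ref{lem:dist-y} and Proposition~\ref{prop:normal-second-order}(1) apply. This matters because $y_k$ is no longer $P_M(x_k)$, so this localization has to be checked explicitly. Because $\sigma$ is fixed, the check only rescales the radius.

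\textbf{Step 2: build the competitor.} Since $x_k\in E$, we have $d_E(y_k)\le\|y_k-x_k\|=\widetilde r_k$. Lemma~\ref{lem:dist-y} then gives $z_k\in P_X(y_k)$ with $\|z_k-y_k\|\le\kappa\widetilde r_k$. Both $z_k$ and $y_k$ lie in $M$ and are close, so Proposition~\ref{prop:normal-second-order}(1) gives
\[
\|P_{N_{y_k}M^0}(z_k-y_k)\|\le\eta\kappa^2\widetilde r_k^2.
\]
This is the only place where the manifold geometry enters. Note that $N_{y_k}M^0$ is the normal space at the actual point $y_k\in M$ used by the algorithm, so no optimality of $y_k$ is needed here.

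\textbf{Step 3: bound the objective.} Since $z_k\in X\subset E$ is feasible,
\[
\frac12\|x_{k+1}-y_k\|^2\le\Phi_k(x_{k+1})\le\Phi_k(z_k)\le\frac12\kappa^2\widetilde r_k^2+\frac{\eta^2\kappa^4}{2\mu_k}\widetilde r_k^4.
\]
With $\mu_k=c\widetilde r_k^\rho$, the last term equals $\frac{\eta^2\kappa^4}{2c}\widetilde r_k^{4-\rho}$. Because $4-\rho\ge3$ and $\widetilde r_k\le\sigma\|x_k-\bar x\|\le1$ after shrinking $\delta$, this term is at most $\frac{\eta^2\kappa^4}{2c}\widetilde r_k^2$. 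For $\rho=0$ we simply use $\mu_k=c$. This gives $\|x_{k+1}-y_k\|\le C_{\rm i}\widetilde r_k$ with $C_{\rm i}:=\sqrt{\kappa^2+\eta^2\kappa^4/c}$.

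\textbf{Step 4: conclude.} The triangle inequality $\|x_{k+1}-x_k\|\le\|x_{k+1}-y_k\|+\|y_k-x_k\|$ gives the second bound with constant $C_{\rm i}+1$.

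The main obstacle is the localization in Step 1, together with keeping every constant independent of $k$ and uniform in $\sigma$. Once $y_k$ is known to lie in the common neighborhood, the rest is routine.
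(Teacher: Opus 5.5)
Your proposal is correct and follows the paper's proof essentially step for step. You compare $\Phi_k(x_{k+1})$ with $\Phi_k(z_k)$ for the point $z_k\in P_X(y_k)$ supplied by Lemma~\ref{lem:dist-y}, control the normal term via Proposition~\ref{prop:normal-second-order}(1), absorb $\widetilde r_k^{\,4-\rho}$ into $\widetilde r_k^{\,2}$ once $\widetilde r_k\le1$, and finish with the triangle inequality. Your explicit localization $\|y_k-\bar x\|\le(1+\sigma)\|x_k-\bar x\|$ is a detail the paper leaves implicit; it makes $\delta$ (though not $C_{\rm i}$) depend on $\sigma$, which is fine since $\sigma$ is fixed.
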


\begin{proof}
\blue{
Since \(x_k\in E\), \(d_E(y_k)\le\widetilde r_k\).
Lemma~\ref{lem:dist-y} therefore gives a point \(z_k\in P_X(y_k)\) with
\(\|z_k-y_k\|\le\kappa\widetilde r_k\). Using \(z_k\) as a comparison
point in the iRN-SLRA objective and applying
Proposition~\ref{prop:normal-second-order}, we obtain
\[
\Phi_k(z_k)
\le C\widetilde r_k^2+C\mu_k^{-1}\widetilde r_k^4
=C\widetilde r_k^2+C\widetilde r_k^{4-\rho}
\le C'\widetilde r_k^2,
\]
where \(0\le\rho\le1\) and the neighborhood has been shrunk so that
\(\widetilde r_k\le1\). Optimality gives
\(\frac12\|x_{k+1}-y_k\|^2\le\Phi_k(z_k)\), proving the first bound; the
second follows from the triangle inequality.
}
\end{proof}

\begin{theorem}[\blue{Local higher-order convergence of iRN-SLRA under intrinsic transversality}]
\label{thm:irn-higher-order}
\blue{Suppose that Assumptions~\ref{ass:smooth} and~\ref{ass:intrinsic} hold.
Let \(y_k\in M\) satisfy
\(\|x_k-y_k\|\le\sigma d_M(x_k)\) for a fixed \(\sigma\ge1\), and choose
\(\mu_k=c\widetilde r_k^\rho\) with \(c>0\) and \(0<\rho\le1\). Then
there exist neighborhoods \(U_0\subset U\) of \(\bar x\) and \(C>0\)
such that every sequence starting from \(x_0\in E\cap U_0\) remains in
\(U\) and converges to a point \(x_\infty^{\rm i}\in X\).
For every nonterminal iteration,}
\begin{align}
\blue{d_X(x_{k+1})}
&\blue{\le C\bigl(\mu_kd_X(x_k)+d_X(x_k)^2\bigr),}
\label{eq:irn-clean-structural}\\
\blue{r_{k+1}}&\blue{\le Cr_k^{1+\rho},\qquad
d_X(x_{k+1})\le Cd_X(x_k)^{1+\rho},}
\label{eq:irn-higher-order}\\
\blue{\|x_{k+1}-x_\infty^{\rm i}\|}
&\blue{\le Cr_k^{1+\rho}
\le C\|x_k-x_\infty^{\rm i}\|^{1+\rho}.}
\label{eq:irn-iterate-order}
\end{align}
\blue{Hence the residual, the distance to \(X\), and the iterates converge
locally with Q-order at least \(1+\rho\), and Q-quadratically when
\(\rho=1\).}
\end{theorem}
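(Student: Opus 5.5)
I will follow the proof of Theorem~\ref{thm:superlinear-trans} almost verbatim, isolating the single place where exactness of $y_k=P_M(x_k)$ was used. Write $d_k:=d_X(x_k)$, $z_k:=P_X(x_k)$, $e_k:=x_k-z_k\in W_{z_k}$ and $Q_k:=P_{N_{y_k}M^0}$.

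First, I collect the a priori bounds. By Lemma~\ref{lem:inexact-residual-equivalence} and Lemma~\ref{lem:dist-x} we have $r_k\le\widetilde r_k\le\sigma r_k\le\sigma d_k$ and $d_k\le\kappa' r_k$. Hence $\|z_k-y_k\|\le d_k+\widetilde r_k\le(1+\sigma)d_k$. By Lemma~\ref{lem:inexact-step-size}, $\|x_{k+1}-z_k\|\le Cd_k$. This gives the decomposition $x_{k+1}-z_k=\tau_k+h_k$ with $\tau_k\in T_{z_k}X^0$, $h_k\in W_{z_k}$, and $\|\tau_k\|+\|h_k\|\le Cd_k$. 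The projected optimality condition \eqref{eq:rn-projected-optimality} only uses that $x_{k+1}$ minimizes the quadratic over $E$, so it holds unchanged. Projecting onto $W_{z_k}$ therefore again gives
\[
(A_{z_k}+\mu_kI)h_k=-\mu_kP_{W_{z_k}}(z_k-y_k)+R_k,
\]
with $R_k$ as in \eqref{eq:rn-remainder}. Each remainder term is controlled exactly as before. Proposition~\ref{prop:normal-second-order}(1) applies because $y_k,z_k\in M$. Local Lipschitz continuity of $y\mapsto P_{N_yM^0}$ gives $\|Q_k\tau_k\|\le Cd_k^2$ and $\|(Q_k-Q_{z_k})h_k\|\le Cd_k\|h_k\|$. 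Thus $\|R_k\|\le C(d_k^2+d_k\|h_k\|)$.

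The only departure from the exact proof is the expansion \eqref{eq:metric-projection-expansion}, which used $y_k=P_M(x_k)$ and is no longer available. I do not need it, because for $\rho>0$ I only aim for the cruder bound \eqref{eq:irn-clean-structural}. I bound $\|\mu_kP_{W_{z_k}}(z_k-y_k)\|\le(1+\sigma)\mu_kd_k$. I then apply the uniform inverse bound $\|(A_{z_k}+\mu_kI)^{-1}\|\le\lambda_*^{-1}$ from Lemma~\ref{lem:effective-coercivity}. After absorbing $Cd_k\|h_k\|$ for small $d_k$, this gives $\|h_k\|\le C(\mu_kd_k+d_k^2)$. Using the graph representation $z_k+\tau_k+\varphi_{z_k}(\tau_k)\in X$ with $\|\varphi_{z_k}(\tau_k)\|\le Cd_k^2$, I get
\[
d_X(x_{k+1})\le\|h_k-\varphi_{z_k}(\tau_k)\|\le C(\mu_kd_k+d_k^2),
\]
which is \eqref{eq:irn-clean-structural}. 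Then $\mu_k\le c\sigma^\rho r_k^\rho\le Cd_k^\rho$ and $d_k^2\le d_k^{1+\rho}$ for $d_k\le1$, so $d_{k+1}\le Cd_k^{1+\rho}$. Combined with $r_{k+1}\le d_{k+1}$ and $d_k\le\kappa'r_k$, this gives $r_{k+1}\le Cr_k^{1+\rho}$, which is \eqref{eq:irn-higher-order}.

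The globalization step follows the end of the proof of Theorem~\ref{thm:superlinear-trans}. Shrinking the ball so that $Cd_k^\rho\le\theta<1$ gives $d_{k+1}\le\theta d_k$. Lemma~\ref{lem:inexact-step-size} gives $\|x_{k+1}-x_k\|\le(C_{\rm i}+1)\sigma d_k$. Choosing $U_0$ with $\|x_0-\bar x\|+\frac{(C_{\rm i}+1)\sigma}{1-\theta}d_0<\delta$ then keeps all iterates in $U$ and makes the steps summable. Hence $x_k\to x_\infty^{\rm i}$, and since $r_k\to0$ and $E,M$ are closed, $x_\infty^{\rm i}\in X$. Summing the tail,
\[
\|x_{k+1}-x_\infty^{\rm i}\|\le C\sum_{j>k}d_j\le Cd_{k+1}\le Cd_k^{1+\rho}\le Cr_k^{1+\rho}.
\]
Finally, $r_k\le d_k\le\|x_k-x_\infty^{\rm i}\|$ yields \eqref{eq:irn-iterate-order}. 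Termination at zero residual is handled by extending the sequence constantly.

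The main obstacle is confirming that nothing in the remainder estimates secretly used $x_k-y_k\in N_{y_k}M^0$ or the expansion of $P_M$. I expect the avoidance of \eqref{eq:metric-projection-expansion} to be exactly what makes the argument insensitive to $\sigma$. This is also why the $\rho=0$ case, where $\mu_k d_k$ is not small relative to $d_k$, needs the separate condition $q\sigma<1$.
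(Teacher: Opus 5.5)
Your proposal is correct and follows the paper's proof essentially step by step. Both use the $W_{z_k}$-projected optimality equation with the same remainder, replace the unavailable metric-projection expansion by the cruder bound $\|h_k\|\le C(\mu_k d_k+d_k^2)$, apply the graph representation of $X$, and close with the same invariance and geometric-tail argument; the only cosmetic difference is that you sum step sizes and tails in terms of $d_k$ rather than $r_k$, which is equivalent since $r_k\le d_k\le\kappa' r_k$.
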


\begin{proof}
\blue{Set
\(
d_k:=d_X(x_k), z_k:=P_X(x_k)\)
and \(
Q_k:=P_{N_{y_k}M^0}.
\)
Since \(x_k,z_k\in E\), metric-projection orthogonality gives
\(x_k-z_k\in W_{z_k}\). Moreover,
Lemma~\ref{lem:inexact-residual-equivalence} and \(r_k\le d_k\) imply
\begin{equation}
\|y_k-z_k\|
\le \widetilde r_k+d_k
\le (1+\sigma)d_k.
\label{eq:irn-yz}
\end{equation}
Together with Lemma~\ref{lem:inexact-step-size}, this gives
\[
x_{k+1}-z_k=\tau_k+h_k,\qquad
\tau_k\in T_{z_k}X^0,\quad h_k\in W_{z_k},
\qquad
\|\tau_k\|+\|h_k\|\le Cd_k.
\]

The effective-error-space calculation in the proof of
Theorem~\ref{thm:superlinear-trans} uses only
\(y_k,z_k\in M\), \(\|y_k-z_k\|=O(d_k)\),
\(\|x_{k+1}-z_k\|=O(d_k)\), and the local Lipschitz continuity of
\(y\mapsto P_{N_yM^0}\). These properties remain valid here, so repeating the same decomposition gives
\begin{equation}
(A_{z_k}+\mu_kI)h_k
=-\mu_kP_{W_{z_k}}(z_k-y_k)+R_k^{\rm i},
\label{eq:irn-effective-equation}
\end{equation}
where
\[
\|R_k^{\rm i}\|
\le C\bigl(d_k^2+d_k\|h_k\|\bigr).
\]
By Lemma~\ref{lem:effective-coercivity} and
\eqref{eq:irn-yz},
\[
\|h_k\|
\le C\bigl(\mu_kd_k+d_k^2+d_k\|h_k\|\bigr).
\]
After shrinking the neighborhood and absorbing the last term, we obtain
\begin{equation}
\|h_k\|\le C\bigl(\mu_kd_k+d_k^2\bigr).
\label{eq:irn-effective-bound}
\end{equation}

Unlike the exact-projection case, the first-order expansion
\eqref{eq:metric-projection-expansion} is not available for a quasioptimal
point \(y_k\). For \(0<\rho\le1\), however,
\eqref{eq:irn-effective-bound} is sufficient. Indeed, the local graph
representation of \(X\subset E\), used as in the proof of
Theorem~\ref{thm:superlinear-trans}, gives
\[
d_X(x_{k+1})
\le C\bigl(\mu_kd_k+d_k^2\bigr).
\]
Since
\[
r_k\le d_k\le\kappa' r_k,
\qquad
\mu_k\le c\sigma^\rho r_k^\rho,
\]
we obtain
\[
d_X(x_{k+1})\le Cd_k^{1+\rho},
\qquad
r_{k+1}\le Cr_k^{1+\rho}.
\]

The local invariance, summability, and geometric-tail arguments at the end
of the proof of Theorem~\ref{thm:superlinear-trans} apply verbatim: after
fixing a sufficiently small neighborhood \(U\) on which the preceding
estimates hold, one chooses a smaller initialization neighborhood
\(U_0\subset U\). Here the RN-SLRA step-size bound is replaced by
\[
\|x_{k+1}-x_k\|
\le (C_{\rm i}+1)\sigma r_k.
\]
Thus every sequence starting from \(x_0\in E\cap U_0\) remains in \(U\),
and consequently \(x_k\to x_\infty^{\rm i}\in X\). Moreover,
\[
\begin{aligned}
\|x_{k+1}-x_\infty^{\rm i}\|
&\le C\sum_{j=k+1}^{\infty}r_j
 \le Cr_{k+1}                                      \\
&\le Cr_k^{1+\rho}
 \le C\|x_k-x_\infty^{\rm i}\|^{1+\rho}.
\end{aligned}
\]
A zero residual corresponds to finite termination at a point of \(X\).}
\end{proof}

\blue{The fixed-regularization regime uses a different mechanism: a local
one-step alternating-projection decrease controls the leading term, while the
regularized correction contributes only at second order.}

\begin{proposition}[\blue{Fixed-regularization case for iRN-SLRA}]
\label{prop:irn-fixed}
\blue{Suppose that Assumptions~\ref{ass:smooth} and~\ref{ass:intrinsic} hold,
and let $\rho=0$, so that $\mu_k\equiv c>0$. Let $q\in(0,1)$ denote a
uniform local one-step alternating-projection decrease constant in the order
$M\to E\to M$. If $q\sigma<1$, then every iRN-SLRA sequence starting
sufficiently close to $\bar x$ remains local and converges linearly to a point
$x_\infty^{\rm i}\in X$.}
\end{proposition}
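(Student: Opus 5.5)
The plan is to compare the iRN-SLRA step with a plain alternating-projection step issued from the same inexact point $y_k$, and to show that the regularized correction changes the manifold residual only at second order. Set $\widetilde r_k=\|x_k-y_k\|$ and $r_k=d_M(x_k)$. We interpret the constant $q$ as follows: for every $y\in M$ near $\bar x$,
\[
d_M(P_E(y))\le q\,\|y-P_E(y)\|=q\,d_E(y).
\]
This is the one-step decrease in the order $M\to E\to M$, and it holds locally with a uniform $q<1$ by separability of $M$ relative to $E$ (Lemma~\ref{lem:equiv-sep-intrinsic}). Since $x_k\in E$, we have $d_E(y_k)\le\widetilde r_k\le\sigma r_k$. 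The AP candidate $\hat x_{k+1}:=P_E(y_k)$ therefore satisfies
\[
d_M(\hat x_{k+1})\le q\sigma r_k .
\]

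The second step is to show that $\|x_{k+1}-\hat x_{k+1}\|$ does not spoil this bound at first order, which is the main obstacle. With $\mu_k\equiv c$ fixed, the difference is not small relative to $\widetilde r_k$; it is only $O(\widetilde r_k)$ by Lemma~\ref{lem:inexact-step-size}. So we cannot simply bound $d_M(x_{k+1})$ by $d_M(\hat x_{k+1})+\|x_{k+1}-\hat x_{k+1}\|$. Instead we measure residuals through normal components. By Proposition~\ref{prop:normal-second-order}(2),
\[
d_M(x_{k+1})\le\|Q_k(x_{k+1}-y_k)\|+G\|x_{k+1}-y_k\|^2 ,
\]
where $Q_k=P_{N_{y_k}M^0}$. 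By Proposition~\ref{prop:normal-second-order}(3),
\[
\|Q_k(\hat x_{k+1}-y_k)\|\le d_M(\hat x_{k+1})+G'\widetilde r_k^2 .
\]
The key claim is that the regularized minimizer does not increase the normal component relative to the AP point:
\[
\|Q_k(x_{k+1}-y_k)\|\le\|Q_k(\hat x_{k+1}-y_k)\| .
\]
To prove it, write $x=\hat x_{k+1}+\ell$ with $\ell\in L$, and note that $\hat x_{k+1}-y_k\perp L$. The objective then becomes
\[
\tfrac12\|\ell\|^2+\tfrac1{2c}\|Q_k(\hat x_{k+1}-y_k)+Q_k\ell\|^2
\]
up to a constant. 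Let $\Psi(\ell)=\|Q_k(\hat x_{k+1}-y_k)+Q_k\ell\|^2$. Optimality of $x_{k+1}$ against $\ell=0$ gives
\[
\tfrac12\|\ell^*\|^2+\tfrac1{2c}\Psi(\ell^*)\le\tfrac1{2c}\Psi(0),
\]
and hence $\Psi(\ell^*)\le\Psi(0)$, which is exactly the claim.

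Combining the two steps with $\|x_{k+1}-y_k\|=O(\widetilde r_k)=O(r_k)$ gives
\[
r_{k+1}\le q\sigma r_k+Cr_k^2 .
\]
Since $q\sigma<1$, we fix $\theta\in(q\sigma,1)$ and shrink the neighborhood so that $Cr_k\le\theta-q\sigma$. This yields $r_{k+1}\le\theta r_k$.

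Localization and convergence then follow as at the end of the proof of Theorem~\ref{thm:superlinear-trans}. The step bound from Lemma~\ref{lem:inexact-step-size} gives $\|x_{k+1}-x_k\|\le(C_{\rm i}+1)\sigma r_k$. We choose $U_0$ so that
\[
\|x_0-\bar x\|+\frac{(C_{\rm i}+1)\sigma}{1-\theta}\,r_0<\delta ,
\]
and argue by induction that all iterates stay in $B(\bar x,\delta)$. The increments are then summable, so $x_k\to x_\infty^{\rm i}\in E$. Since $r_k\to0$ and $M$ is locally closed, $x_\infty^{\rm i}\in X$. Residuals converge Q-linearly, and the iterates converge R-linearly through the geometric tail bound $\|x_k-x_\infty^{\rm i}\|\le C\theta^k r_0$. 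Lemma~\ref{lem:dist-x} transfers this rate to $d_X(x_k)$ as R-linear convergence.
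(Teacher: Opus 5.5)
Your proposal is correct and follows essentially the same route as the paper's proof. You compare the regularized minimizer with the alternating-projection point $P_E(y_k)$ in the subproblem objective to get $\|Q_k(x_{k+1}-y_k)\|\le\|Q_k(P_E(y_k)-y_k)\|$, then combine Proposition~\ref{prop:normal-second-order} with the one-step decrease $d_M(P_E(y_k))\le q\sigma r_k$ to obtain $r_{k+1}\le q\sigma r_k+Cr_k^2$, and you close with the same localization and geometric-tail argument (your use of Lemma~\ref{lem:dist-x} to transfer the R-linear rate to $d_X(x_k)$ is a harmless addition).
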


\begin{proof}
\blue{
By Lemma~\ref{lem:equiv-sep-intrinsic}, intrinsic transversality is
equivalent to separability in the present manifold--affine setting.
Therefore, the local one-step alternating-projection decrease result
\cite{Drusvyatskiy2019} applies. Thus there is $q\in(0,1)$ such that, with
$p_k:=P_E(y_k)$,
\[
d_M(p_k)\le qd_E(y_k)
\le q\|y_k-x_k\|\le q\sigma r_k.
\]
Set $Q_k:=P_{N_{y_k}M^0}$, $b_k:=p_k-y_k$, and
$s_k:=x_{k+1}-p_k\in L$. Since $b_k\perp L$, comparison of the iRN-SLRA
objective at $x_{k+1}$ and $p_k$ gives
\[
\frac12\|s_k\|^2+\frac{1}{2c}\|Q_k(x_{k+1}-y_k)\|^2
\le \frac{1}{2c}\|Q_kb_k\|^2.
\]
Consequently, using Proposition~\ref{prop:normal-second-order}, the iRN-SLRA
step-size estimate, and $\|b_k\|=d_E(y_k)\le\widetilde r_k\le\sigma r_k$,
\[
\begin{aligned}
r_{k+1}
&\le \|Q_k(x_{k+1}-y_k)\|+G\|x_{k+1}-y_k\|^2\\
&\le \|Q_kb_k\|+Cr_k^2
 \le d_M(p_k)+Cr_k^2
 \le q\sigma r_k+Cr_k^2.
\end{aligned}
\]
Choose $\theta\in(q\sigma,1)$ and shrink the neighborhood so that
$r_{k+1}\le\theta r_k$. Together with
$\|x_{k+1}-x_k\|\le(C_{\rm i}+1)\sigma r_k$, the same local induction and
geometric-tail argument as Theorem~\ref{thm:superlinear-trans} shows that the iterates remain in the
neighborhood, their steps are summable, and
$x_k\to x_\infty^{\rm i}\in X$ linearly. 
}
\end{proof}

\begin{corollary}[\blue{Limit-point accuracy of iRN-SLRA}]
\label{thm:inexact-limit-map}
\blue{Under the assumptions of either Theorem~\ref{thm:irn-higher-order} or
Proposition~\ref{prop:irn-fixed}, there
exists \(\gamma^{\rm i}>0\) such that, for every sufficiently close initial point $x_0\in E$ and
every admissible iRN-SLRA sequence starting from $x_0$ with limit
$x_\infty^{\rm i}\in X$, }
\begin{equation}
\|x_\infty^{\rm i}-P_X(x_0)\|
\le\gamma^{\rm i}\|x_0-P_X(x_0)\|.
\label{eq:irn_x_inf}
\end{equation}
\end{corollary}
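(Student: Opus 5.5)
The argument follows the proof of Theorem~\ref{thm:lim-point-rn}: write the distance from the limit to the projection as a telescoping sum of steps, and bound that sum by a geometric series in $d_0:=d_X(x_0)=\|x_0-P_X(x_0)\|$. Let $p:=P_X(x_0)$, which is single-valued near $\bar x$, and put $d_k:=d_X(x_k)$ and $r_k:=d_M(x_k)$. Since $x_k\to x_\infty^{\rm i}$ along any admissible sequence,
\[
\|x_\infty^{\rm i}-p\|\le\|x_0-p\|+\sum_{k=0}^{\infty}\|x_{k+1}-x_k\|.
\]
Lemma~\ref{lem:inexact-step-size} and Lemma~\ref{lem:inexact-residual-equivalence} give $\|x_{k+1}-x_k\|\le(C_{\rm i}+1)\widetilde r_k\le(C_{\rm i}+1)\sigma r_k$. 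Because $X\subset M$, we also have $r_k\le d_k$. So it suffices to show that $\sum_k r_k\le K d_0$ for a constant $K$ that does not depend on $x_0$ or on the particular admissible sequence.

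\emph{Case of Theorem~\ref{thm:irn-higher-order} ($0<\rho\le1$).} By \eqref{eq:irn-higher-order}, $d_{k+1}\le Cd_k^{1+\rho}=(Cd_k^{\rho})\,d_k$. We first shrink the neighborhood $U$ on which the theorem's estimates hold so that $Cd^{\rho}\le\theta<1$ whenever $d\le\delta$. All iterates stay in $U$, so $d_k\le\theta^k d_0$ and hence $\sum_k r_k\le d_0/(1-\theta)$. This is the same uniform $\theta$ used in the invariance argument of Theorem~\ref{thm:superlinear-trans}, so I would simply cite it.

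\emph{Case of Proposition~\ref{prop:irn-fixed} ($\rho=0$, $q\sigma<1$).} The proof there gives $r_{k+1}\le\theta r_k$ with a uniform $\theta\in(q\sigma,1)$. Hence $\sum_k r_k\le r_0/(1-\theta)\le d_0/(1-\theta)$, using $r_0\le d_0$ once more.

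Combining the two cases,
\[
\|x_\infty^{\rm i}-p\|\le\Bigl(1+\tfrac{(C_{\rm i}+1)\sigma}{1-\theta}\Bigr)d_0,
\]
which is \eqref{eq:irn_x_inf} with $\gamma^{\rm i}$ equal to the bracketed constant. The only delicate step is uniformity. The constants $\theta$, $C_{\rm i}$ and the neighborhoods must be fixed before $x_0$ is chosen, and they must hold for every admissible choice of the quasioptimal points $y_k$, not just one. This holds because every estimate used (the step-size lemma, the residual equivalence, and the contraction recurrences) was derived only from the quasioptimality inequality $\|x_k-y_k\|\le\sigma d_M(x_k)$ together with the local geometry. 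None of them depends on which $y_k$ was actually selected.
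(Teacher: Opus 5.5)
Your proposal is correct and is essentially the paper's own argument. You telescope $\|x_\infty^{\rm i}-P_X(x_0)\|$ through the step sizes and bound each step by $(C_{\rm i}+1)\sigma r_k$ via Lemmas~\ref{lem:inexact-residual-equivalence} and~\ref{lem:inexact-step-size}; you then sum a geometric series using the uniform contraction factor $\theta$ from Theorem~\ref{thm:irn-higher-order} or Proposition~\ref{prop:irn-fixed} together with $r_0\le d_0=\|x_0-P_X(x_0)\|$. The only minor difference is that in the higher-order case you run the geometric decay on $d_k$ rather than $r_k$, which is equally valid since $r_k\le d_k$.
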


\begin{proof}
\blue{Let \(p:=P_X(x_0)\). The proofs of
Theorem~\ref{thm:irn-higher-order} and Proposition~\ref{prop:irn-fixed} each
give \(r_k\le\theta^kr_0\) for a uniform \(\theta\in(0,1)\) under their
respective assumptions. Lemmas~\ref{lem:inexact-residual-equivalence}
and~\ref{lem:inexact-step-size} give
\(\|x_{k+1}-x_k\|\le C\sigma r_k\), while
\(r_0\le\|x_0-p\|\). Summing the geometric tail proves
\eqref{eq:irn_x_inf}.}
\end{proof}
\blue{
\begin{remark}[First-order retraction-like interpretation for iRN-SLRA
under a fixed quasioptimal selection]
\label{rmk:inexact-retraction}
Fix a single-valued local $\sigma$-quasioptimal projection selection
\[
    \widetilde P_M:E\cap U\to M,
    \qquad
    \|z-\widetilde P_M(z)\|
    \le \sigma d_M(z),
\]
and consider iRN-SLRA with
\[
    y_k=\widetilde P_M(x_k)
\]
at every iteration. For every sufficiently close initial point
$z\in E$, the resulting  sequence converges to a uniquely
 limit, which we denote by
\[
    \Psi^{\rm i}_{\widetilde P_M}(z)\in X.
\]
By Corollary~\ref{thm:inexact-limit-map},
\[
    \|\Psi^{\rm i}_{\widetilde P_M}(z)-P_X(z)\|
    \le
    \gamma^{\rm i}\|z-P_X(z)\|.
\]

Now let $x\in X$ be near $\bar x$ and
$\xi\in T_xX^0$ be sufficiently small. Since
\[
    P_X(x+\xi)=x+\xi+O(\|\xi\|^2),
    \qquad
    d_X(x+\xi)=O(\|\xi\|^2),
\]
we obtain
\[
    \|\Psi^{\rm i}_{\widetilde P_M}(x+\xi)
      -P_X(x+\xi)\|
    =O(\|\xi\|^2),
\]
and hence
\[
    \Psi^{\rm i}_{\widetilde P_M}(x+\xi)
    =x+\xi+O(\|\xi\|^2).
\]
Thus, for every fixed single-valued $\sigma$-quasioptimal projection
selection $\widetilde P_M$, the corresponding iRN-SLRA limit map
satisfies the same first-order retraction-like expansion as in
Remark~\ref{rmk:retraction-clean-intersection}.
\end{remark}
}

\section{Experimental Results}
In this section, we test our algorithm on some applications of structured low-rank approximation, and compare it with representative existing methods. \footnote{Our Python code is available at \url{https://github.com/reniusll/Regularized-Newton-SLRA}}
\blue{We first study an analytically verified clean but nontransversal problem,
then consider larger synthetic and Hankel SLRA instances. Exact geometry for
the small-scale problem and sampled intrinsic transversality diagnostics for the larger
experiments are reported in Appendix~A.}
All experiments were conducted in Python 3.9.13 on an Ubuntu 22.04 server equipped with dual AMD EPYC 9754 128-core processors (512 logical threads) and 1 TB of RAM.

\subsection{\texorpdfstring{\blue{A Clean but Nontransversal Small-Scale SLRA Problem}}{A Clean but Nontransversal Small-Scale SLRA Problem}}

\blue{We construct an artificial affine subspace intersecting the $4\times4$
rank-$2$ manifold $\mathcal D_2$ cleanly but nontransversally. The construction
directly tests the higher-order theory while making the classical
tangent-intersection system singular. We set
$E=x^*+\operatorname{span}(B_1,\ldots,B_4)$, where the $B_i$ are obtained by
orthonormalizing the raw directions below.}

\blue{We take $x^*=\operatorname{diag}(4,0.02,0,0)$ and define the raw
affine directions
\[
\begin{aligned}
\widetilde B_1&=E_{11},&
\widetilde B_2&=E_{13}+0.7E_{31}+E_{33},\\
\widetilde B_3&=E_{24}-0.6E_{42}+E_{44},&
\widetilde B_4&=E_{34}+0.2E_{12}.
\end{aligned}
\]
The actual basis $B_1,\ldots,B_4$ is obtained by Gram--Schmidt
orthonormalization. We use
\[
x_0=x^*+0.02\sum_{i=1}^4a_iB_i,\qquad
a=(-0.98912135,-0.36778665,1.28792526,0.19397442).
\]
Appendix~A.2 proves analytically that there are a neighborhood $U$ of $x^*$
and $\varepsilon>0$ such that
\[
E\cap\mathcal D_2\cap U
=\{x^*+tE_{11}:|t|<\varepsilon\},
\]
and that this intersection is clean but not transversal. We set
$c=3\times10^{-7}$ and $\rho=1$.}

\begin{figure}[htbp]
  \centering
  \includegraphics[width=0.9\textwidth]{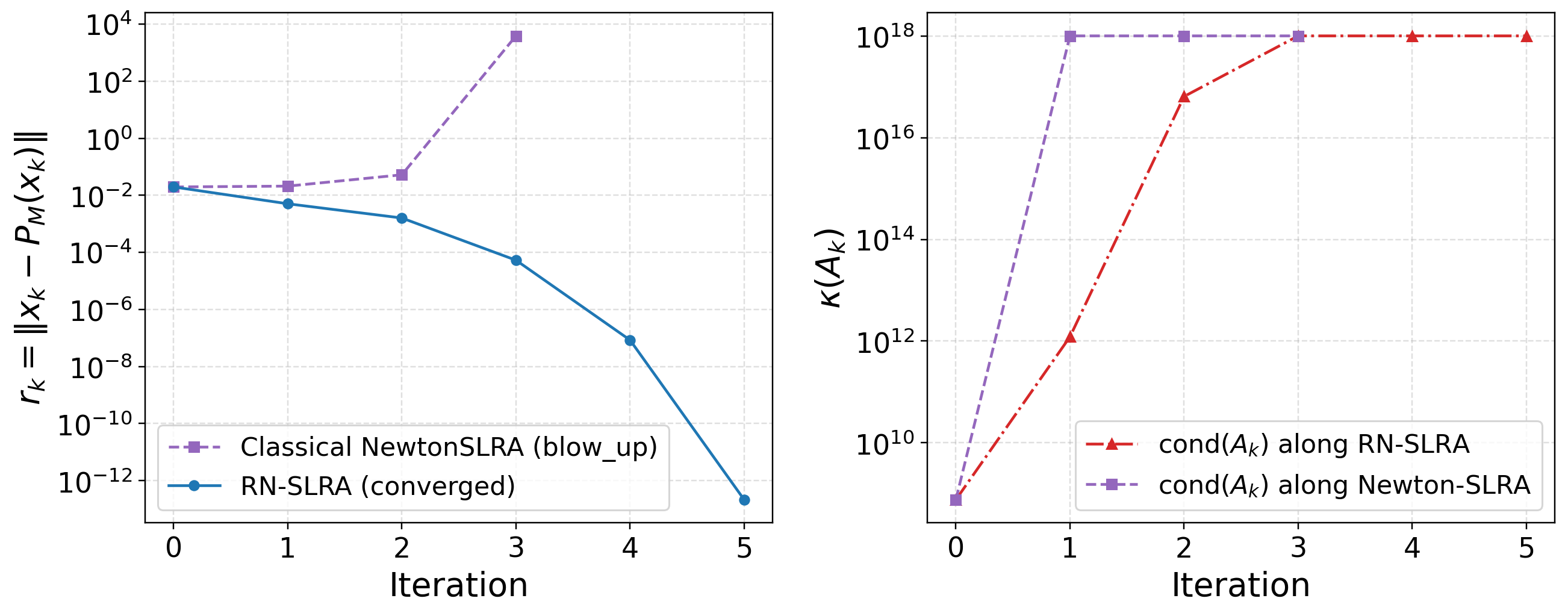}
  \caption{\blue{A clean but nontransversal small-scale SLRA problem. Left: rank residuals of
  RN-SLRA and Newton-SLRA. Right: condition numbers of the classical
  tangent-intersection systems; Condition numbers are clipped at \(10^{18}\) for visualization. 
  RN-SLRA exhibits the predicted quadratic behavior, while the Newton system
  becomes numerically singular in this run.}}
  \label{fig:Newton-SLRA-vs-RN-SLRA}
\end{figure}

\blue{The results are shown in Fig.~\ref{fig:Newton-SLRA-vs-RN-SLRA}.
RN-SLRA reduces the rank residual from $1.97\times10^{-2}$ to
$2.14\times10^{-13}$ after five updates. In the final two local steps,
\[
\frac{r_4}{r_3^2}=29.6042,\qquad
\frac{r_5}{r_4^2}=29.9994.
\]
Since the local solution branch is known explicitly, we also compute the
Frobenius distance from $x_k$ to its representing affine line. Its final two
quadratic ratios are $19.3415$ and $19.5281$. The finite, nearly stable values
of both sets of ratios are consistent with the quadratic estimates underlying
Theorem~\ref{thm:superlinear-trans} for $\rho=1$.}

\blue{The same example exposes the loss of numerical stability of the
classical Newton step. The singular values of $A(x^*)$ are
\(
(0.98058,0.65094,0.63372,0),
\)
so the tangent-intersection system is singular at the solution. Along the
Newton-SLRA trajectory, its condition number grows from $7.63\times10^8$ to
$7.79\times10^{20}$ and $2.61\times10^{33}$ before becoming numerically
singular. Newton-SLRA diverges in this run, whereas RN-SLRA remains stable and
its residual sequence displays the predicted quadratic behavior.}

\subsection{Large-scale SLRA Problems}
We construct the intersection point \(x^*\) and the affine subspace \(E\) in large-scale experiments using a similar method as before, and choose the initial point as \(x_0=P_E(x^*+\alpha G)\), where \(G_{ij}\sim N(0,1)\) are i.i.d. We compare
RN-SLRA and Cadzow in nine large-scale experiments; see Fig.~\ref{fig:large-scale}, where \(n,m\) are the matrix dimensions, \(p=\dim E\), and \(r\) is the target
rank. For each tuple \((n,m,p,r)\), both methods use the same initial point,
with \(\alpha=0.05\) for the \(100\times100\) cases, \(\alpha=0.02\) for the
larger cases, and stopping tolerance \(10^{-8}\).

\begin{figure}[htbp]
  \centering
  \includegraphics[width=0.9\textwidth]{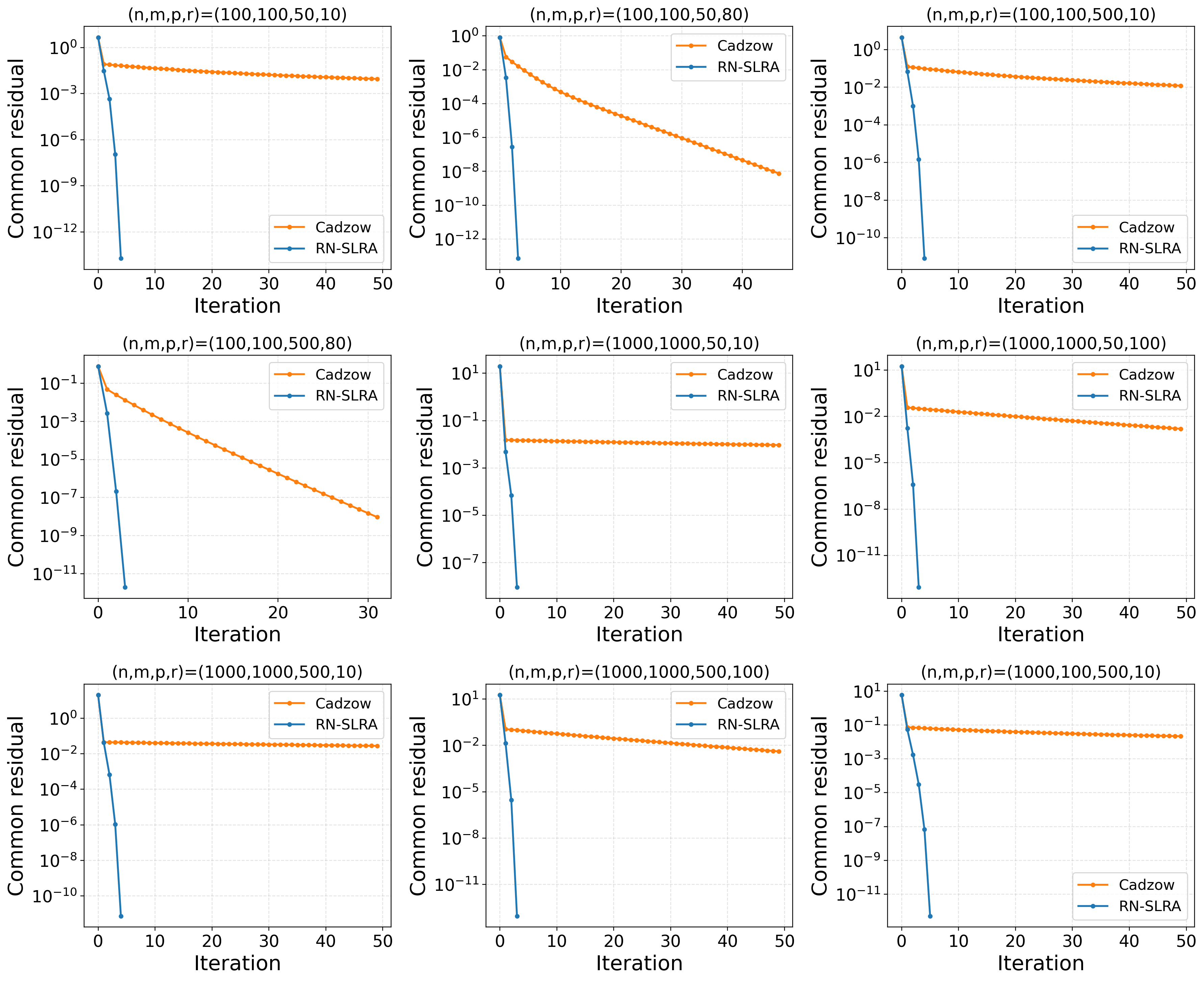}
  \caption{Large-scale SLRA Problems.}
  \label{fig:large-scale}
\end{figure}

As shown in Fig.~\ref{fig:large-scale}, RN-SLRA converges within five iterations in
all cases, whereas Cadzow converges much more slowly.

\subsection{Low-Rank Approximation of Hankel Matrices}
We now test the performance of RN-SLRA for low-rank approximation of Hankel
matrices \cite{ParkZhangRosen1999}. We compare RN-SLRA with four representative
baseline families for Hankel structured low-rank approximation: Cadzow; VP-SLRA,
based on variable projection and reduced nonlinear optimization for affinely
structured SLRA \cite{MarkovskyUsevich2014SoftwareSLRA}; STLN, which approaches
the problem from a structured error-correction perspective
\cite{RosenParkGlick1996STLN,RosenParkGlick1998}; and Gradient-system, based on
low-rank factor updates with a Hankel penalty
\cite{FazziGuglielmiMarkovsky2021GradientSystem}.


We will construct a single example and then perform both a noise sweep and an outlier sweep to compare RN-SLRA with the baselines on \blue{spectrally ill-conditioned} Hankel low-rank approximation problems.

\subsubsection{Single example}\leavevmode\par
We consider a \(7\times5\) rank-\(4\) Hankel matrix
\(H_c=(\nu_{i+j-1})\), where
\(\nu_i=\sum_{\ell=1}^4\beta_\ell z_\ell^i\) for \(i=1,\ldots,11\).
Here \(\beta=(1.2,0.9,1.0,1.1)\) and
\(z=(e^{-0.2},e^{-0.2+10^{-3}},e^{-0.35},e^{-0.5})\).
\blue{The closeness of the first two poles makes the instance spectrally ill-conditioned
and very close to a lower-rank stratum, although the intersection remains transversal.}

The perturbed initial matrix is generated as 
  \begin{align}
    \label{eq:H0}
    H_0=H_c+\tau \Delta + \eta E_{\text{out}},
  \end{align}
where $\Delta$ is a random Hankel noise matrix, $E_{\text{out}}$ is a Hankel matrix supported on the 9-th anti-diagonal, and in this example we choose $\tau=3\times 10^{-3},\eta=1\times10^{-2}$. $\Delta$ is a Hankel matrix with generating entries picked independently from $\mathcal N(0,1)$.

We set the parameter of RN-SLRA to $c=1\times 10^{-7},\rho=1$. We terminate an algorithm if it stalls or if its iteration count reaches the
maximum number 300. The numerical results are presented in Fig.~\ref{fig:Hankel-example}.
\begin{figure}[htbp]
  \centering
  \includegraphics[width=0.4\textwidth]{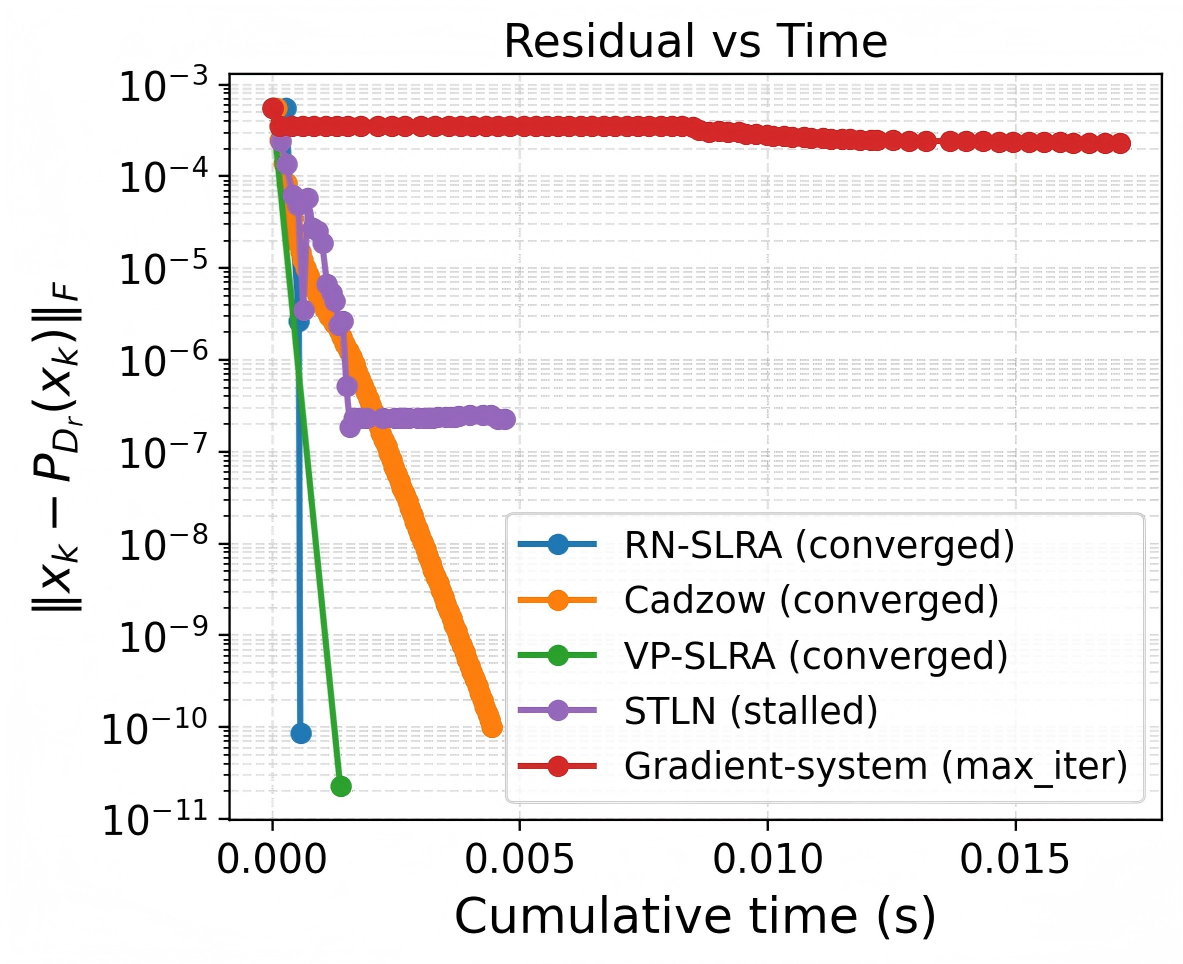}
  \caption{Low-Rank Approximation of Hankel Matrices.}
  \label{fig:Hankel-example}
\end{figure}

This result shows that RN-SLRA still converges within only a few iterations and rapidly reduces the residual to high accuracy. VP-SLRA also converges to a very small residual, but requires more CPU time than RN-SLRA. Cadzow converges more slowly, while STLN stalls at a moderate residual level, and the gradient-system method fails to make substantial progress within the prescribed maximum number of iterations. These results demonstrate that RN-SLRA remains effective for \blue{spectrally
ill-conditioned} structured low-rank approximation problems, achieving fast convergence and competitive accuracy.


\subsubsection{Noise Sweep and Outlier Sweep}

We next test the robustness of RN-SLRA under different noise and outlier
levels. In both sweeps, the initial point is generated by \eqref{eq:H0}, and
several independent random Hankel noises \(\Delta\) are used. For the noise
sweep, the outlier level \(\eta\) is fixed and
\(\tau\in\{10^{-4},3\times10^{-4},10^{-3},3\times10^{-3}\}\). For the outlier
sweep, the noise level \(\tau\) is fixed and
\(\eta\in\{0,10^{-3},3\times10^{-3},10^{-2}\}\). The mean CPU times over 30
random perturbations are reported in Table~\ref{tab:Hankel-sweeps}.  
Here “fail” means that the algorithm does not reach the prescribed tolerance
within the iteration limit.

\begin{table}[htbp]
\centering
\caption{Mean CPU times with sample standard deviations in seconds for the Hankel noise and outlier sweeps over
30 random perturbations. Here RN, VP, and GS denote RN-SLRA, VP-SLRA, and
Gradient-system, respectively.}
\label{tab:Hankel-sweeps}
\setlength{\tabcolsep}{4pt}
\footnotesize
\begin{tabular}{cc|ccccc}
\hline
~Sweep~ & ~Level~ & ~RN~ & ~Cadzow~ & ~VP~ & ~STLN~ & ~GS~ \\
\hline
Noise & $10^{-4}$ & $0.0009\pm0.0003$ & $0.0030\pm0.0014$ & $0.0028\pm0.0023$ & fail & fail \\
Noise & $3\cdot 10^{-4}$ & $0.0009\pm0.0003$ & $0.0031\pm0.0014$ & $0.0016\pm0.0003$ & fail & fail \\
Noise & $10^{-3}$ & $0.0009\pm0.0004$ & $0.0034\pm0.0015$ & $0.0016\pm0.0004$ & fail & fail \\
Noise & $3\cdot 10^{-3}$ & $0.0008\pm0.0002$ & $0.0039\pm0.0015$ & $0.0016\pm0.0003$ & fail & fail \\
\hline
Outlier & $0$ & $0.0009\pm0.0003$ & $0.0035\pm0.0013$ & $0.0018\pm0.0005$ & fail & fail \\
Outlier & $10^{-3}$ & $0.0008\pm0.0001$ & $0.0039\pm0.0018$ & $0.0021\pm0.0013$ & fail & fail \\
Outlier & $3\cdot 10^{-3}$ & $0.0008\pm0.0002$ & $0.0032\pm0.0017$ & $0.0018\pm0.0002$ & fail & fail \\
Outlier & $10^{-2}$ & $0.0008\pm0.0002$ & $0.0036\pm0.0018$ & $0.0023\pm0.0009$ & fail & fail \\
\hline
\end{tabular}
\end{table}

The results show that RN-SLRA remains robust and consistently achieves the
smallest mean CPU time across both sweeps. VP-SLRA and Cadzow also converge,
but require more time, while STLN and Gradient-system fail on all tested
instances. This further illustrates the effectiveness of RN-SLRA on examples
with poor geometry.

\subsubsection{Large-scale inexact-projection experiment}
We finally compare RN-SLRA with iRN-SLRA when the rank-\(r\) projection is
computed only approximately. Here \(n\) and \(m\) denote the matrix dimensions
and \(r\) is the target rank. We use larger degenerate Hankel SLRA instances
\(H_c^{(n,m,3)}\in\mathbb R^{n\times m}\) generated from the repeated-pole
model \(\nu_i=\beta_0\lambda^i+\beta_1 i\lambda^i+\beta_2\mu^i\), where the
term \(i\lambda^i\) induces poor local geometry. We test
\((n,m,r)\in\{(120,160,3),(200,280,3),(350,500,3),(700,1000,3)\}\).

To keep the instances comparable, the poles are moved closer to one as the
matrix size increases and the coefficients are mildly rescaled. 
For these four sizes, the corresponding
\((\lambda,\mu,\beta_0,\beta_1,\beta_2)\) are
$(e^{-0.06},e^{-0.19},1.0,0.09,0.8)$,
$(e^{-0.035},e^{-0.12},1.0,0.06,$
$0.65)$,
$(e^{-0.022},e^{-0.075},1.0,0.045,0.55)$, and
$(e^{-0.012},e^{-0.045},1.0,0.025,0.45)$.

The initial matrix is \(H_0=H_c+\tau\Delta+\eta E_{\mathrm{out}}\), where
\(\Delta\) is random Hankel noise and \(E_{\mathrm{out}}\) is a Hankel outlier
supported on one anti-diagonal. We normalize the perturbations by
\(\alpha_c=\|H_c\|/\sqrt{mn}\), and set
\(\tau=5\times10^{-2}\alpha_c\), \(\eta=10^{-1}\alpha_c\). For both methods,
we tune \(c\in\{10^{-8},3\times10^{-8},10^{-7},3\times10^{-7},10^{-6}\}\) and
\(\rho\in\{0,0.5,1.0\}\). For iRN-SLRA, the rank-\(r\) projection is
approximated by randomized SVD \cite{halko2011finding}.
We use \(\varepsilon_{\rm r}\in\{0.1,0.25\}\)
as the relative tolerance in the randomized SVD routine, and
\(s\in\{1,2,3\}\) as the number of subspace iterations.
The stopping tolerance is \(10^{-7}\), and the maximum
number of outer iterations is \(40\).
\begin{table}[htbp]
\caption{Large-scale Hankel experiments: iteration counts, CPU times, and final rank residuals.}
\label{tab:Hankel-large-irn}
\centering
\small
\begin{tabular}{c|ccc|ccc}
\hline
\((n,m,r)\) & \multicolumn{3}{c|}{RN-SLRA}
& \multicolumn{3}{c}{iRN-SLRA} \\
& Iter. & Time (s) & Final residual & Iter. & Time (s) & Final residual \\
\hline
(120,160,3) & 6 & 0.8515 & $2.62{\times}10^{-10}$ & 4 & 0.4793 & $5.60{\times}10^{-10}$ \\
(200,280,3) & 7 & 2.8913 & $1.63{\times}10^{-10}$ & 4 & 1.3842 & $3.82{\times}10^{-10}$ \\
(350,500,3) & 5 & 9.3378 & $1.33{\times}10^{-10}$ & 4 & 6.8277 & $3.68{\times}10^{-13}$ \\
(700,1000,3) & 6 & 98.1399 & $2.33{\times}10^{-10}$ & 4 & 58.0918 & $5.74{\times}10^{-10}$ \\
\hline
\end{tabular}
\end{table}

Table~\ref{tab:Hankel-large-irn} shows that iRN-SLRA preserves the robustness
of RN-SLRA while reducing the running time. The gain becomes more pronounced
at larger scales; for \((700,1000,3)\), the time decreases from \(98.14\) to
\(58.09\) seconds. 

\section{Conclusion}

We proposed RN-SLRA, a regularized Newton-type method for local
manifold--affine intersection problems. \blue{Under intrinsic transversality,
equivalently clean intersection in the present $C^2$ manifold--affine setting,
RN-SLRA converges locally linearly for $\rho=0$ and with Q-order at least
$1+\rho$ for $0<\rho\le1$, including quadratic convergence for $\rho=1$.
For iRN-SLRA with $\sigma$-quasioptimal manifold projections, the same
higher-order rates hold for $0<\rho\le1$, while fixed regularization yields local linear
convergence when $q\sigma<1$. The experiments support the predicted quadratic
behavior on an analytically verified clean but nontransversal instance and
illustrate the computational efficiency of iRN-SLRA on larger Hankel problems.}


\bibliographystyle{siamplain}
\bibliography{references}

\appendix

\section{\texorpdfstring{\blue{Geometric Analysis and Numerical Diagnostics}}{Geometric Analysis and Numerical Diagnostics}}
\label{app:intrinsic transversality}

\subsection{Numerical Principle}

Let \(E\) be the affine constraint set and let \(D_r\) be the fixed-rank manifold of rank-\(r\) matrices. Around a point \(x^*\in E\cap D_r\), we sample nearby points
\(x\in E\) and \(y\in D_r\), and define
\(u=(x-y)/\|x-y\|_F\).
We then compute the intrinsic transversality proxy
\[
    \blue{\eta(x,y)
    =
    \max\Big\{
        \operatorname{dist}\big(u,N_y{D_r}^0\big),
        \operatorname{dist}\big(u,N_xE^0\big)
    \Big\}.}
\]
Since \(E\) is affine, \(\blue{N_xE^0=L^\perp}\) is independent of \(x\). A positive lower
bound on \(\eta(x,y)\) over nearby sampled pairs indicates that the displacement
direction cannot be simultaneously close to both normal geometries, which is
the separation expected from intrinsic transversality.
\blue{A positive minimum over finitely many sampled pairs is only a numerical
diagnostic of local separation; it is not a proof of intrinsic transversality.
In particular, isolated bad directions can be missed by random sampling.
Whenever exact intersection geometry is needed below, we verify it
analytically.}

\blue{For the synthetic experiments we also report singular-value or rank
diagnostics for the classical tangent-intersection system. Sampled values of
$\eta$, when used below, are only numerical diagnostics and are not invoked to
establish clean intersection.}

\subsection{\texorpdfstring{\blue{Exact Clean but Nontransversal Geometry of the Small-Scale Problem}}{Exact Clean but Nontransversal Geometry of the Small-Scale Problem}}

\blue{Parameterize the affine space using the raw directions from
Section~5.1 as
\(\Phi(z)=x^*+\sum_{i=1}^4 z_i\widetilde B_i\), where
\(z=(z_1,z_2,z_3,z_4)\).
Since Gram--Schmidt orthonormalization does not change their span, this
parameterizes the same affine space $E$. With the $2\times2$ block partition
\[
\Phi(z)=\begin{pmatrix}A(z)&B(z)\\C(z)&D(z)\end{pmatrix},
\]
we have
\[
A(z)=\begin{pmatrix}4+z_1&0.2z_4\\0&0.02\end{pmatrix},\quad
B(z)=\begin{pmatrix}z_2&0\\0&z_3\end{pmatrix},
\]
\[
C(z)=\begin{pmatrix}0.7z_2&0\\0&-0.6z_3\end{pmatrix},\quad
D(z)=\begin{pmatrix}z_2&z_4\\0&z_3\end{pmatrix}.
\]
For $z$ sufficiently close to $0$, $A(z)$ is invertible, and the
Schur-complement criterion gives
\[
\operatorname{rank}\Phi(z)=2
\quad\Longleftrightarrow\quad
F(z):=D(z)-C(z)A(z)^{-1}B(z)=0.
\]
A direct calculation yields
\[
F(z)=\begin{pmatrix}
z_2-\dfrac{0.7z_2^2}{4+z_1}&
z_4+\dfrac{7z_2z_3z_4}{4+z_1}\\[1ex]
0&z_3+30z_3^2
\end{pmatrix}.
\]
After shrinking the neighborhood if necessary, the factors
$1-0.7z_2/(4+z_1)$, $1+30z_3$, and
$1+7z_2z_3/(4+z_1)$ are nonzero. Hence $F(z)=0$ implies
$z_2=z_3=z_4=0$. Conversely, these three equalities make every entry of
$F(z)$ vanish, while $z_1$ remains free. Consequently, for a sufficiently
small neighborhood $U$ of $x^*$, we have 
\(
E\cap\mathcal D_2\cap U
=\{x^*+tE_{11}:|t|<\varepsilon\}.
\)
Thus the local intersection is a one-dimensional smooth embedded submanifold.}

\blue{Moreover, at every parameter $z=(t,0,0,0)$ representing a point on
this local intersection,
\[
DF(z)[h]=\begin{pmatrix}h_2&h_4\\0&h_3\end{pmatrix},
\qquad
\ker DF(z)=\operatorname{span}\{e_1\}.
\]
The four raw directions are linearly independent, so
$D\Phi(z):\mathbb R^4\to L$ is a linear isomorphism onto the direction space
$L=E^0$. Since the Schur-complement equations locally define
$\mathcal D_2$ within the affine chart $E$, the tangent space of their zero set
is carried by this isomorphism to the intersection of the underlying direction
spaces. Therefore,
\[
L\cap T_{\Phi(z)}\mathcal D_2^0
=D\Phi(z)\ker DF(z)
=\operatorname{span}\{E_{11}\}
=T_{\Phi(z)}(E\cap\mathcal D_2)^0.
\]
Hence $E$ and $\mathcal D_2$ intersect cleanly near $x^*$.}

\blue{The intersection is nevertheless not transversal. Indeed, $\dim E=4$,
while the rank-$2$ manifold in $\mathbb R^{4\times4}$ has dimension
\(
\dim\mathcal D_2=2(4+4-2)=12.
\)
Transversality would therefore require
\(
\dim\bigl(L\cap T_{x^*}\mathcal D_2^0\bigr)=4+12-16=0.
\)
The clean-intersection calculation above instead gives
\(
L\cap T_{x^*}\mathcal D_2^0=\operatorname{span}\{E_{11}\},
\)
which is one-dimensional. Thus classical transversality fails. In agreement
with this exact calculation, the singular values of the classical
tangent-intersection matrix are
\(
\sigma(A(x^*))=(0.98058,0.65094,0.63372,0).
\)
The explicit local description also allows the local solution-set distance
used in Section~5.1 to be computed as the Frobenius distance to the affine line
$x^*+\operatorname{span}\{E_{11}\}$, provided the iterates remain in the
neighborhood above.}

\subsection{Large-Scale Synthetic SLRA Examples}

For the nine large-scale synthetic SLRA examples, we report a rank bound for
the classical transversality matrix and the minimum sampled intrinsic transversality proxy.
The proxy is computed over 24 local samples with sampling radius \(0.01\).

\[
\begin{array}{ccccc}
\hline
 (m,n) & p & r & \text{rank bound} & \min \eta\\
\hline
 (100,100) & 50 & 10 & 2 & 0.9998316\\
 (100,100) & 50 & 80 & 2 & 0.9997508\\
 (100,100) & 500 & 10 & 2 & 0.9995969\\
 (100,100) & 500 & 80 & 2 & 0.9995995\\
 (1000,1000) & 50 & 10 & 2 & 0.9999997\\
 (1000,1000) & 50 & 100 & 2 & 0.9999983\\
 (1000,1000) & 500 & 10 & 2 & 0.9999981\\
 (1000,1000) & 500 & 100 & 2 & 0.9999890\\
 (1000,100) & 500 & 10 & 2 & 0.9999526\\
\hline
\end{array}
\]

In all nine cases, the sampled intrinsic proxy is close to one. In contrast,
the recorded transversality rank bound is \(2\), much smaller than the number
of affine constraints \(p\). This is consistent with the construction: the
examples are rank deficient in the classical tangent-intersection sense, while
retaining a strong sampled intrinsic separation.

\subsection{Hankel SLRA Experiments}

We also computed the intrinsic transversality proxy for the small Hankel
experiments. \blue{For each case, 200 sampled pairs were used at sampling radii
\(0.01\), \(0.02\), and \(0.05\).} The table reports the smallest sampled value of
\(\eta\) over these radii, together with the range of median values. The column
\(\sigma_{\min}(A)\) gives the smallest singular value of the tangent-restriction
matrix when it was explicitly formed.

\[
\begin{array}{lccccc}
\hline
\text{experiment} & (m,n) & r & \sigma_{\min}(A) & \min \eta & \operatorname{median}\eta\text{ range}\\
\hline
\text{single example} & (7,5) & 4
& 4.5647\cdot 10^{-1} & 0.8628 & [0.9788,0.9800]\\
\text{noise sweep} & (7,5) & 4
& 4.5575\cdot 10^{-1} & 0.8234 & [0.9783,0.9788]\\
\text{noise sweep} & (7,5) & 4
& 4.5647\cdot 10^{-1} & 0.8710 & [0.9788,0.9802]\\
\text{noise sweep} & (8,5) & 2
& 4.8981\cdot 10^{-17} & 0.7718 & [0.8812,0.8838]\\
\text{outlier sweep} & (7,5) & 4
& 4.5575\cdot 10^{-1} & 0.8864 & [0.9803,0.9815]\\
\text{outlier sweep} & (7,5) & 4
& 4.5647\cdot 10^{-1} & 0.8391 & [0.9778,0.9799]\\
\text{outlier sweep} & (8,5) & 2
& 4.8981\cdot 10^{-17} & 0.7627 & [0.8761,0.8792]\\
\hline
\end{array}
\]

For all small Hankel cases, the sampled intrinsic transversality proxy remains bounded away
from zero. Some cases have a nearly singular tangent-restriction matrix, while
their intrinsic transversality proxy remains positive. This is consistent with the pattern
observed in the synthetic SLRA examples.

\subsection{\texorpdfstring{Large-Scale Hankel Intrinsic Transversality Diagnostics}{Large-Scale Hankel intrinsic transversality Diagnostics}}

For the large-scale Hankel benchmark used in
Table~\ref{tab:Hankel-large-irn}, the tangent-restriction 
matrix was not explicitly formed. The intrinsic transversality proxy was computed directly
from sampled normal-cone distances, using 24 sampled pairs at radius \(0.01\)
for each case.

\[
\begin{array}{ccccc}
\hline
\blue{(n,m)} & r & \text{samples} & \min\eta & \operatorname{median}\eta\\
\hline
\blue{(120,160)} & 3 & 24 & 0.9752 & 0.9786\\
\blue{(200,280)} & 3 & 24 & 0.9852 & 0.9879\\
\blue{(350,500)} & 3 & 24 & 0.9920 & 0.9927\\
\blue{(700,1000)} & 3 & 24 & 0.9961 & 0.9964\\
\hline
\end{array}
\]
All four large-scale Hankel cases have sampled \(\eta\)-values close to one,
providing further numerical evidence of intrinsic separation.

\section{Retraction induced by RN-SLRA in the case \texorpdfstring{\(\rho=0\)}{rho=0}}
\label{app:rho-zero-retraction}

\blue{This appendix further studies the fixed-regularization case $\rho=0$, for which
Section~\ref{sec:local-convergence} establishes local linear convergence. We
justify the retraction interpretation in
Remark~\ref{rmk:retraction-clean-intersection} by proving smoothness of the
limit map and its genuine and second-order retraction properties under the
stated additional regularity assumptions.}

Let \(\Phi_c:E\to E\) denote the one-step RN-SLRA map with
\(\mu_k\equiv c\). For \(z\in E\) close to \(X\), set
\(y=P_M(z)\), \(Q_y:=P_{\blue{N_yM^0}}\), and define
\[
        \Phi_c(z)
        :=
        \arg\min_{u\in E}
        \left\{
        \frac12\|u-y\|^2
        +
        \frac{1}{2c}\|Q_y(u-y)\|^2
        \right\}.
\]
We first show that, under clean intersection and suitable smoothness, the
limiting map of \(\Phi_c\) defines a genuine retraction on \(X\). We then show
that one additional order of smoothness yields a second-order retraction.

\begin{theorem}[Retraction for RN-SLRA when \(\rho=0\)]
\label{thm:rho-zero-retraction} Assume that $M$ is locally of class $C^3$ near $\bar x$ and that
$E$ intersects $M$ cleanly at $\bar x$.  Then, for all \(z\in E\) sufficiently close to \(X\), the
limit
\[
        \Psi_c(z):=\lim_{k\to\infty}\Phi_c^k(z)
\]
exists locally and \(\Psi_c\) is \(C^1\). Consequently,
\[
        R_x^c(\xi):=\Psi_c(x+\xi),
        \qquad x\in X,\quad \blue{\xi\in T_xX^0},
\]
defines a \(C^1\) retraction on \(X\).
\end{theorem}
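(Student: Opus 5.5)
The plan is to treat $\Psi_c$ as the limit of iterates of a $C^1$ map that fixes every point of $X$ and contracts linearly in the directions transverse to $X$. The invariant-manifold (stable foliation) argument for such maps then gives a $C^1$ limit map, in the way smoothness of limits of alternating projections is handled in \cite{chen2026retractionsalternatingprojections}.

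\textbf{Step 1: regularity and fixed points of $\Phi_c$.} Since $M$ is $C^3$, the projection $P_M$ is $C^2$ near $M$, and $y\mapsto Q_y$ is $C^2$ as well. The minimizer of the strongly convex quadratic solves the linear system \eqref{eq:sys}, whose coefficient operator $P_L(I+c^{-1}Q_y)|_L$ is uniformly invertible and depends smoothly on $z$. Hence $\Phi_c$ is $C^2$ on a neighborhood of $\bar x$ in $E$. For $z\in X$ we have $y=z$, and $u=z$ gives the objective value $0$, so $\Phi_c(z)=z$ and $X\subset\operatorname{Fix}\Phi_c$.

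\textbf{Step 2: linearization at $z\in X$.} I would differentiate the optimality condition \eqref{eq:rn-projected-optimality} with $\mu=c$, using $DP_M(z)=P_{T_zM^0}=I-Q_z$. The derivative of $Q_y$ is multiplied by $u-y=0$ at the fixed point, so that term drops out. For $h\in L$ this gives
\[
(A_z+cI)\,D\Phi_c(z)h=c\,P_L(I-Q_z)h+A_zh=(A_z+cI)h-cA_zh,
\]
which is the computation behind \eqref{eq:rn-effective-expansion}. It shows that $D\Phi_c(z)$ is the identity on $T_zX^0$, where $Q_z$ vanishes, and maps $W_z$ into itself. On $W_z$ the map is $I-c(A_z+cI)^{-1}A_z=c(A_z+cI)^{-1}$ plus the contribution absorbed into $S_z(c)$. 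Combining Lemma~\ref{lem:effective-coercivity} with \eqref{eq:effective-error-contraction}, the spectrum of $D\Phi_c(z)$ on $W_z$ lies in $[0,\vartheta]$ for some $\vartheta<1$, uniformly in $z$. So $X$ is a $C^2$ manifold of fixed points that is normally attracting, with the tangent directions neutral and the normal directions uniformly contracted.

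\textbf{Step 3: smoothness of the limit map.} Theorem~\ref{thm:superlinear-trans} with $\rho=0$ already gives local existence of $\Psi_c$ and the geometric rate. To obtain $C^1$, I would show that the derivatives $D(\Phi_c^k)(z)=\prod_{j<k}D\Phi_c(\Phi_c^j z)$ converge uniformly near $\bar x$. Each factor satisfies
\[
D\Phi_c(x_j)=D\Phi_c(P_Xx_j)+O(d_X(x_j)),
\]
and $d_X(x_j)\le\theta^jd_0$. The frozen factors act as the identity on $T X^0$ and contract $W$ by $\vartheta$, while the tangent spaces of $X$ vary by $O(\theta^j)$ along the orbit. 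An inductive block-triangular estimate should then show that the products are Cauchy, converging to a projection-type operator at rate $O(\theta^j)$. Uniform convergence of $\Phi_c^k\to\Psi_c$ together with uniform convergence of the derivatives yields $\Psi_c\in C^1$. This is the \textbf{main obstacle}: controlling the non-commuting product of perturbed almost-projections. An alternative route would be to cite the $C^1$ stable foliation theorem for normally hyperbolic manifolds of fixed points; this needs $\Phi_c\in C^2$, which is exactly why the $C^3$ assumption is imposed.

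\textbf{Step 4: retraction properties.} The map $R^c_x(\xi)=\Psi_c(x+\xi)$ is $C^1$ for $(x,\xi)$ near the zero section, being a composition of $C^1$ maps, and it takes values in $X$. Since $\Psi_c|_X=\mathrm{id}$, we get $R^c_x(0)=x$. Differentiating,
\[
DR^c_x(0)=D\Psi_c(x)|_{T_xX^0}=\lim_k D(\Phi_c^k)(x)|_{T_xX^0}=I,
\]
because by Step 2 each $D\Phi_c(x)$ is the identity on $T_xX^0$. Alternatively, the expansion $R^c_x(\xi)=x+\xi+O(\|\xi\|^2)$ from Remark~\ref{rmk:retraction-clean-intersection} gives the same conclusion directly.
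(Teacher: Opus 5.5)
Your overall strategy works, and Steps~1 and~4 essentially agree with the paper, but the linearization displayed in Step~2 is wrong. Differentiate $P_L[c(u-y)+Q_y(u-y)]=0$ at $z\in X$ along $h\in L$, with $u'=D\Phi_c(z)h\in L$ and $y'=(I-Q_z)h$. This gives $(cI+A_z)u'=cP_Ly'+P_LQ_zy'$, and the last term vanishes because $Q_z(I-Q_z)=0$. Hence $(A_z+cI)\,D\Phi_c(z)h=c(I-A_z)h$, that is, $D\Phi_c(z)=c(cI+A_z)^{-1}(I-A_z)$ on $L$. There is no extra $+A_zh$; you effectively used $y'=h$ inside the $Q_z$-term. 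The correct map is the identity on $T_zX^0=\ker A_z$ and equals $S_z(c)$ on $W_z$, so \eqref{eq:effective-error-contraction} gives the uniform bound $\vartheta(c)<1$. Your operator $I-c(A_z+cI)^{-1}A_z$ instead has eigenvalues $\bigl(c(1-\lambda)+\lambda\bigr)/(c+\lambda)$ on $W_z$, and the largest of these exceeds $\vartheta(c)$. So the bound you cite does not apply to what you derived, and the identity ``$=c(A_z+cI)^{-1}$ plus the contribution absorbed into $S_z(c)$'' is false. With the correction, your conclusion (neutral tangentially, uniformly contracting on $W_z$) is exactly the paper's.

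For the $C^1$ claim you take a different route. The paper uses tubular normal-form coordinates $z=s+w$, with $s=P_X(z)$ and $w\in W_s$. In these coordinates $\Phi_c(s,w)=(s+a(s,w),\,B_sw+b(s,w))$, where $a,b=O(\|w\|^2)$ and their first derivatives are $O(\|w\|)$. The paper proves existence of the limit itself through the telescoping series $s_\infty=s+\sum_k a(s_k,w_k)$. It then gets $C^1$ from the differentiated recursions $\|D_zw_k\|\le C\theta^k$ and $\sup_k\|D_zs_k\|\le C$.

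You borrow existence from Theorem~\ref{thm:superlinear-trans}. That is legitimate, but you should state that it uses the equivalence of clean intersection and intrinsic transversality. You then estimate ambient Jacobian products, and the estimate you leave open does close. Let $\theta\in(\vartheta(c),1)$ be the linear rate from that theorem, and set $d_0:=d_X(z)$, $D_k:=D(\Phi_c^k)(z)$, $s_k:=P_X(x_k)$. Let $\Pi_k$ be the projector onto $T_{s_k}X^0$, $P^W_k:=I-\Pi_k$, and $E_k:=D\Phi_c(x_k)-D\Phi_c(s_k)$. Then $D_{k+1}=(\Pi_k+S_{s_k}(c)P^W_k+E_k)D_k$, with $\|E_k\|+\|\Pi_{k+1}-\Pi_k\|\le Cd_0\theta^k$. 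Writing $g_k:=\|P^W_kD_k\|$ and $n_k:=\|D_k\|$, this gives $g_{k+1}\le\vartheta(c) g_k+Cd_0\theta^kn_k$ and $n_{k+1}\le(1+Cd_0\theta^k)n_k+2g_k$. For $d_0$ small the coupled induction closes, giving $\sup_kn_k<\infty$ and $g_k=O(\theta^k)$, hence $\|D_{k+1}-D_k\|=O(\theta^k)$. The smallness of $d_0$ is what breaks the circularity.

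This is the same mechanism as the paper's induction, carried out without the normal form. The paper's coordinates buy more. The series representation directly yields $\Psi_c(z)=P_X(z)+O(d_X(z)^2)$. It is also reused for the $C^2$ bounds in Theorem~\ref{thm:rho-zero-second-order}, whereas the product approach would need separate second-order bookkeeping.
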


\begin{proof}
Since $M$ is $C^3$,  
the fixed-regularization one-step map \(\Phi_c\) is of class \(C^2\) in a
neighborhood of \(X\).
For \(y\) close to \(X\), write \(p=P_E(y)\). Since every point in \(E\)
can be written as \(p+\ell\) with \(\ell\in L\), the subproblem defining
\(\Phi_c(z)\) is equivalent to minimizing over \(\ell\in L\). Its first-order
optimality condition is
\[
        \ell+\frac1c P_LQ_y(p-y+\ell)=0 .
\]
Equivalently,
\[
        (cI_L+A_y)\ell=-P_LQ_y(p-y),
        \qquad
        A_y:=P_LQ_y|_L .
\]
Since \(A_y\) is self-adjoint positive semidefinite on \(L\) and \(c>0\),
the operator \(cI_L+A_y\) is uniformly invertible. Hence \(\Phi_c\) is locally
well defined and inherits the assumed smoothness.

For \(x\in X\), we have \(P_M(x)=x\), \(P_E(x)=x\), and hence
\(\Phi_c(x)=x\). Thus \(X\) is a fixed-point manifold of \(\Phi_c\).
We now compute the derivative of \(\Phi_c\) at \(x\in X\). Let \(h\in L\).
Since \(\mathrm{D}P_M(x)h=P_{\blue{T_xM^0}}h\), differentiating the optimality system
at \(x\) gives
\[
        (cI_L+A_x)\ell'
        =
        -P_LQ_x\big(P_LP_{\blue{T_xM^0}}h-P_{\blue{T_xM^0}}h\big),
        \qquad
        A_x:=P_LQ_x|_L .
\]
Using \(P_{\blue{T_xM^0}}=I-Q_x\) and \(h\in L\), we have
\(P_LP_{\blue{T_xM^0}}h=h-A_xh\). Hence the right-hand side equals
\(-A_x(I_L-A_x)h\), and therefore
\[
        \ell'=-(cI_L+A_x)^{-1}A_x(I_L-A_x)h .
\]
Since \(\mathrm{D}\Phi_c(x)h=P_LP_{\blue{T_xM^0}}h+\ell'\), we obtain
\begin{equation}
\label{eq:rho0-DPhi}
        \mathrm{D}\Phi_c(x)h
        =
        \left(I_L+c^{-1}A_x\right)^{-1}(I_L-A_x)h .
\end{equation}

\blue{By Lemma~\ref{lem:effective-coercivity}, $A_x$ is self-adjoint and
uniformly positive definite on $W_x=L\ominus \blue{T_xX^0}$, with
$\ker A_x=\blue{T_xX^0}$.}
Thus \(\mathrm{D}\Phi_c(x)h=h\) for \(h\in \blue{T_xX^0}\). If \(h\in W_x\) is an
eigenvector of \(A_x\) with eigenvalue \(\lambda\in[\blue{\lambda_*},1]\), then
\[
        \mathrm{D}\Phi_c(x)h
        =
        \frac{1-\lambda}{1+\lambda/c}h .
\]
Consequently, the derivative is the identity along \(T_xX\) and is uniformly
contractive on \(W_x\). Therefore \(X\) is a normally attracting fixed-point
manifold of \(\Phi_c\).

We next prove the \(C^1\) regularity of the limiting map. We work in a
local tubular coordinate system in \(E\) around \(X\). Namely, for \(z\in E\)
close to \(X\), write
\[
        z=s+w,\qquad s=P_X(z),\qquad w\in W_s:=L\ominus \blue{T_sX^0} .
\]
Using a smooth local frame of the bundle \(W_s\), we identify the spaces
\(W_s\) with a fixed Euclidean space. In these coordinates, write
\[
        \Phi_c(s,w)=\big(s+a(s,w),\,B_s w+b(s,w)\big).
\]
Since every point of \(X\) is fixed by \(\Phi_c\), and since the derivative
of \(\Phi_c\) is the identity in the tangent direction and has no first-order
tangential response in the normal direction, we have
\[
        a(s,0)=0,\qquad b(s,0)=0,\qquad
        D_w a(s,0)=0,\qquad D_w b(s,0)=0.
\]
Moreover, by the uniform contraction on \(W_s\), after shrinking the
neighborhood if necessary,
\[
        \|B_s\|\le q<1
\]
uniformly for \(s\in X\) close to \(\bar x\). Since \(\Phi_c\) is \(C^2\), it follows that
\begin{equation}
\label{eq:rho0-ab-quadratic}
        a(s,w)=O(\|w\|^2),
        \qquad
        b(s,w)=O(\|w\|^2).
\end{equation}
Moreover,
\begin{equation}
\label{eq:rho0-first-derivative-estimates}
\begin{aligned}
        D_s a(s,w)&=O(\|w\|),&
        D_w a(s,w)&=O(\|w\|),\\
        D_s b(s,w)&=O(\|w\|),&
        D_w b(s,w)&=O(\|w\|).
\end{aligned}
\end{equation}
All estimates are local and uniform.

Let
\[
        (s_{k+1},w_{k+1})=\Phi_c(s_k,w_k),
        \qquad
        (s_0,w_0)=(s,w).
\]
The normal component satisfies
\[
        w_{k+1}=B_{s_k}w_k+b(s_k,w_k).
\]
Choose \(\theta\in(q,1)\). By \eqref{eq:rho0-ab-quadratic}, after shrinking
the neighborhood if necessary, the nonlinear term \(b(s,w)\) is small enough
so that
\begin{equation}
\label{eq:rho0-normal-decay}
        \|w_k\|\le C\theta^k\|w_0\|.
\end{equation}
Moreover,
\[
        s_{k+1}-s_k=a(s_k,w_k),
\]
and hence, by \eqref{eq:rho0-ab-quadratic} and
\eqref{eq:rho0-normal-decay},
\[
        \sum_{k=0}^{\infty}\|s_{k+1}-s_k\|
        \le
        C\sum_{k=0}^{\infty}\|w_k\|^2
        \le
        C\|w_0\|^2.
\]
Therefore \(s_k\) converges to a limit \(s_\infty\), \(w_k\to0\), and
\begin{equation}
\label{eq:rho0-limit-representation}
        \Psi_c(s,w)=(s_\infty,0),
        \qquad
        s_\infty=s+\sum_{k=0}^{\infty}a(s_k,w_k).
\end{equation}

It remains to justify the \(C^1\) dependence on the initial point. Differentiating
the recursions and using \eqref{eq:rho0-first-derivative-estimates},
\eqref{eq:rho0-normal-decay}, and the uniform contraction in the
\(w\)-equation gives, by a standard induction on the differentiated recursions,
\begin{equation}
\label{eq:rho0-first-derivative-bounds}
        \|D_z w_k\|\le C\theta^k,
        \qquad
        \sup_k\|D_z s_k\|\le C .
\end{equation}
By the chain rule and \eqref{eq:rho0-first-derivative-estimates},
\[
\begin{aligned}
\|D_z a(s_k,w_k)\|
&\le
C\|w_k\|\big(\|D_zs_k\|+\|D_zw_k\|\big)  \\
&\le C'\theta^k .
\end{aligned}
\]
Thus the derivative series
\[
        D_zs_0+\sum_{k\ge0}D_z a(s_k,w_k)
\]
converges uniformly. It follows from \eqref{eq:rho0-limit-representation} that
\(s_\infty\) is \(C^1\), and therefore
\(\Psi_c(s,w)=(s_\infty(s,w),0)\) is \(C^1\).

Finally, define, for \(x\in X\) close to \(\bar x\) and
\(\xi\in \blue{T_xX^0}\) sufficiently small,
\[
        R_x^c(\xi):=\Psi_c(x+\xi).
\]
\blue{Let \(TX^0
:=
\bigcup_{x\in X}\{x\}\times T_xX^0.\)} Since \(\Psi_c\) is \(C^1\), \(R^c:\blue{TX^0}\to X\) is \(C^1\). Also,
\(\Psi_c(x)=x\) for every \(x\in X\). Hence, for any
\(\xi\in T_xX\), taking a \(C^1\) curve \(\gamma(t)\subset X\) with
\(\gamma(0)=x\) and \(\gamma'(0)=\xi\), we have
\(\Psi_c(\gamma(t))=\gamma(t)\), and therefore
\(\mathrm{D}\Psi_c(x)\xi=\xi\). Consequently,
\[
        R_x^c(0_x)=x,
        \qquad
        \mathrm{D}R_x^c(0_x)=\mathrm{id}_{\blue{T_xX^0}}.
\]
Thus \(R^c\) is a \(C^1\) retraction on \(X\).
\end{proof}

\begin{theorem}[Second-order retraction for RN-SLRA when \(\rho=0\)]
\label{thm:rho-zero-second-order}
Assume the hypotheses of Theorem~\ref{thm:rho-zero-retraction}. Assume in
addition that  \(M\) is
locally \(C^4\) near \(\bar x\). Then the limiting map \(\Psi_c\) is \(C^2\).
Consequently,
\[
        R_x^c(\xi):=\Psi_c(x+\xi),
        \qquad x\in X,\quad \xi\in \blue{T_xX^0},
\]
defines a second-order retraction on \(X\).
\end{theorem}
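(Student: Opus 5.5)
The plan is to rerun the proof of Theorem~\ref{thm:rho-zero-retraction} one derivative higher, and then read off the second-order property from the structure of $\Phi_c$ along $X$. With $M$ of class $C^4$, the projector $P_M$ and the map $y\mapsto Q_y$ are $C^3$ near $X$. The optimality system $(cI_L+A_y)\ell=-P_LQ_y(p-y)$ is uniformly invertible, so $\Phi_c$ is $C^3$. Hence, in the tubular coordinates $(s,w)$ used before, $a$ and $b$ are $C^3$ and satisfy $a(s,0)=b(s,0)=0$ and $D_wa(s,0)=D_wb(s,0)=0$. Consequently the second derivatives $D^2a$ and $D^2b$ are bounded, and their mixed and $s$-second derivatives are $O(\|w\|)$. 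The frame identifying the bundle $W_s$ with a fixed Euclidean space is $C^2$, since $X$ is $C^3$ when $M$ is $C^4$ under clean intersection, so the coordinates remain $C^2$.

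\textbf{$C^2$ regularity of $\Psi_c$.} I will differentiate the recursions $s_{k+1}=s_k+a(s_k,w_k)$ and $w_{k+1}=B_{s_k}w_k+b(s_k,w_k)$ twice. The first-derivative bounds \eqref{eq:rho0-first-derivative-bounds} are already available. By induction I aim for
\[
\|D_z^2w_k\|\le C\theta^k,\qquad \sup_k\|D_z^2s_k\|\le C .
\]
For $w$, the second-derivative recursion has the form $D^2w_{k+1}=B_{s_k}D^2w_k+F_k$, where $F_k$ collects terms such as $D^2B\,[Ds_k,Ds_k]w_k$, $DB\,[Ds_k]Dw_k$, $D^2b\,[Dw_k,\cdot]$ and $D_wb\,D^2w_k$. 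Each of these is $O(\theta^k)$ once we use $\|w_k\|,\|Dw_k\|=O(\theta^k)$ together with $\sup_k\|D^2s_k\|\le C$ (the latter assumed inductively on a finite horizon and closed at the end). The $D_wb=O(\|w\|)$ term is absorbed into the contraction. Contraction with $\|B_s\|\le q<\theta$ then gives the geometric bound.

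For $s$, we have $D^2s_{k+1}-D^2s_k=D^2[a(s_k,w_k)]$. This involves $D^2a[(Ds,Dw),(Ds,Dw)]$ and $Da\,D^2(s,w)$. I will use the sharper structure $D_s^2a=O(\|w\|)$, which follows because $a(s,0)\equiv0$. Together with the first-order estimates \eqref{eq:rho0-first-derivative-estimates}, every term is then $O(\theta^k)$ except the mixed $D_w^2a[Dw_k,Dw_k]$, which is $O(\theta^{2k})$. The increments are therefore summable uniformly. The derivative series $D^2s_0+\sum_k D^2[a(s_k,w_k)]$ thus converges uniformly, and $s_\infty$, hence $\Psi_c$, is $C^2$.

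\textbf{Main obstacle.} The delicate step is closing this induction uniformly. The difficulty is that the bound $\sup\|D^2s_k\|$ feeds back into $F_k$, so the estimates must be run jointly with a bootstrap on a shrunken neighborhood, exactly as in the $C^1$ case.

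\textbf{Second-order property.} A $C^2$ retraction $R$ is second order iff $\frac{d^2}{dt^2}R_x(t\xi)|_{t=0}\in N_xX$, i.e.\ $P_{T_xX^0}D^2\Psi_c(x)[\xi,\xi]=0$. Differentiating $\Psi_c\circ\Phi_c=\Psi_c$ twice at $x\in X$ in direction $\xi\in T_xX^0$, and using $D\Phi_c(x)\xi=\xi$, gives
\[
D\Psi_c(x)\,D^2\Phi_c(x)[\xi,\xi]=0 .
\]
Along curves in $X$, $\Psi_c$ is the identity, so $D^2\Psi_c(x)[\xi,\xi]=\gamma''(0)+D\Psi_c(x)\bigl(\cdot\bigr)$-type terms. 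More concretely, write $x+t\xi=\gamma(t)+\nu(t)$ with $\gamma(t)\in X$ and $\nu(t)\in W_{\gamma(t)}$, $\nu(t)=O(t^2)$. Then $\Psi_c(x+t\xi)=\gamma(t)+D\Psi_c(\gamma(t))\nu(t)+O(t^3)$. Take $\gamma(t)=P_X(x+t\xi)$; its second derivative at $0$ is normal to $X$ (metric-projection curve), with tangential part zero. It therefore suffices to show $D\Psi_c(x)$ annihilates $W_x$. This follows from the limit representation: $D_w s_\infty(s,0)=\sum_k D_wa(s_k,0)\,(\cdots)=0$ since $w_k\equiv0$ and $D_wa(s,0)=0$. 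Hence $D\Psi_c(x)=P_{T_xX^0}$ on $L$. Combining, $\frac{d^2}{dt^2}R^c_x(t\xi)|_{0}=\gamma''(0)\perp T_xX^0$, which gives the second-order retraction property.
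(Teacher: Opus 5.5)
Your proposal is correct. For the $C^2$ regularity of $\Psi_c$ you follow the paper's proof essentially step for step: twice-differentiated recursions, $D_s^2a=O(\|w\|)$ from $a(s,0)\equiv0$, geometric decay of $D_z^2w_k$, and uniform convergence of the second-derivative series.

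The feedback of $D_z^2s_k$ that you call the main obstacle is closed in the paper by a discrete Gronwall bound, $M_{k+1}\le(1+C\theta^k)M_k$ with $M_k:=1+\max_{j\le k}\|D_z^2s_j\|$, which needs no smallness. Your bootstrap also closes, since $D_z^2s_k$ enters the increments only with coefficients $O(\|w_k\|)$.

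One bookkeeping fix: under clean intersection with $M$ of class $C^4$, $X$ is $C^4$, not just $C^3$, and you need this. The frame of $W_s$ and the $(s,w)$-chart must be $C^3$ for $a,b$ to be $C^3$. With only $C^2$ coordinates, the bounds $D_s^2a,\,D_s^2b=O(\|w\|)$ would not follow.

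For the second-order property your route genuinely differs. The paper does not differentiate $\Psi_c$ here at all. The limit representation gives $s_\infty-s=\sum_k a(s_k,w_k)=O(\|w_0\|^2)$, i.e.\ $\Psi_c(z)=P_X(z)+O(d_X(z)^2)$. Combined with $d_X(x+\xi)=O(\|\xi\|^2)$ and $P_X(x+\xi)=\operatorname{Exp}_x(\xi)+O(\|\xi\|^3)$, this finishes the proof.

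You instead show $D\Psi_c(y)|_{W_y}=0$ for $y\in X$. You then split $x+t\xi=\gamma(t)+\nu(t)$ with $\gamma(t)=P_X(x+t\xi)$, $\nu(t)\in W_{\gamma(t)}$ and $\|\nu(t)\|=O(t^2)$, and Taylor-expand to get $\Psi_c(x+t\xi)=\gamma(t)+O(\|\nu(t)\|^2)$. Both arguments reduce to the second-order property of the metric-projection retraction.

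The paper's route is more economical. Its closeness estimate uses only bounds already available from the proof of Theorem~\ref{thm:rho-zero-retraction}, and $C^2$ regularity enters only so that $R^c$ qualifies as a $C^2$ retraction. Yours uses the $C^2$ bound on $\Psi_c$ for the Taylor remainder. In exchange it isolates the structural fact that $D\Psi_c(y)=DP_X(y)=P_{T_yX^0}$ on $L$ along $X$, which also follows at once from the paper's estimate.

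Your $O(t^4)$ remainder in fact yields the paper's $\operatorname{Exp}_x(\xi)+O(\|\xi\|^3)$ form directly. Finally, the identity $D\Psi_c(x)D^2\Phi_c(x)[\xi,\xi]=0$ that you derive first from $\Psi_c\circ\Phi_c=\Psi_c$ is never used and can be dropped.
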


\begin{proof}
We use the notation and local coordinates from the proof of
Theorem~\ref{thm:rho-zero-retraction}. 
Since now $\Phi_c$ is $C^3$, the functions $a$ and $b$ satisfy the following
strengthened local estimates. Indeed, the fixed-point property on $X$ gives
$a(s,0)=b(s,0)=0$, and the normal-form construction gives
$D_w a(s,0)=D_w b(s,0)=0$. Hence, by Taylor's formula in the normal variable $w$,
uniformly for $s$ near $\bar x$,
\[
a(s,w)=O(\|w\|^2),\qquad b(s,w)=O(\|w\|^2),
\]
and, differentiating this Taylor representation with respect to $s$ and $w$,
\begin{equation}
\label{eq:rho0-strong-first-derivative-estimates}
\begin{aligned}
        D_s a(s,w)&=O(\|w\|^2),&
        D_w a(s,w)&=O(\|w\|),\\
        D_s b(s,w)&=O(\|w\|^2),&
        D_w b(s,w)&=O(\|w\|),
\end{aligned}
\end{equation}
and, for the second derivatives,
\begin{equation}
\label{eq:rho0-second-derivative-estimates}
\begin{aligned}
D_s^2a(s,w)&=O(\|w\|),&
D_sD_w a(s,w)&=O(\|w\|),&
D_w^2a(s,w)&=O(1),\\
D_s^2b(s,w)&=O(\|w\|),&
D_sD_w b(s,w)&=O(\|w\|),&
D_w^2b(s,w)&=O(1).
\end{aligned}
\end{equation}
All estimates are local and uniform. The weaker \(O(\|w\|)\) estimate for
\(D_s^2a\) and \(D_s^2b\) is sufficient for the argument below.

We already know from the proof of Theorem~\ref{thm:rho-zero-retraction} that
\begin{equation}
\label{eq:rho0-basic-bounds}
        \|w_k\|\le C\theta^k\|w_0\|,
        \qquad
        \|D_z w_k\|\le C\theta^k,
        \qquad
        \sup_k\|D_zs_k\|\le C.
\end{equation}

\blue{We now control the second derivatives. Set
\[
S_k:=D_zs_k,\qquad
W_k:=D_zw_k,\qquad
H_k:=D_z^2s_k,\qquad
K_k:=D_z^2w_k.
\]

We first consider the normal recursion
\(
w_{k+1}=B_{s_k}w_k+b(s_k,w_k).
\)
Let
\(
F(s,w):=B_sw+b(s,w).
\)
By \eqref{eq:rho0-strong-first-derivative-estimates} and
\eqref{eq:rho0-second-derivative-estimates},
\[
D_wF(s,w)=B_s+O(\|w\|),\qquad
D_sF(s,w)=O(\|w\|),
\]
while
\[
D_s^2F(s,w)=O(\|w\|),\qquad
D_sD_wF(s,w)=O(1),\qquad
D_w^2F(s,w)=O(1).
\]
Recall that \(\|B_s\|\le q<\theta<1\) uniformly. After shrinking the
neighborhood if necessary, choose \(\widehat q\) with
\(
q<\widehat q<\theta
\)
such that
\(
\|D_wF(s,w)\|\le \widehat q
\)
throughout the neighborhood under consideration.

Differentiating the normal recursion twice and using
\eqref{eq:rho0-basic-bounds}, we therefore obtain
\begin{equation}
\label{eq:rho0-second-w-recursion}
\|K_{k+1}\|
\le
\widehat q\,\|K_k\|
+
C\theta^k\bigl(1+\|H_k\|\bigr).
\end{equation}
Indeed, the term involving \(K_k\) is
\(D_wF(s_k,w_k)K_k\). The term involving \(H_k\) has coefficient
\(D_sF(s_k,w_k)=O(\|w_k\|)=O(\theta^k)\), while the quadratic
terms in \(S_k\) and \(W_k\) are bounded by \(C\theta^k\) in view of
\eqref{eq:rho0-basic-bounds}.

For the tangential recursion
\(
s_{k+1}=s_k+a(s_k,w_k),
\)
the second-order chain rule gives
\[
\begin{aligned}
H_{k+1}
={}&H_k
+D_sa(s_k,w_k)H_k
+D_wa(s_k,w_k)K_k+D_s^2a(s_k,w_k)[S_k,S_k]\\
&+2D_sD_wa(s_k,w_k)[S_k,W_k]+D_w^2a(s_k,w_k)[W_k,W_k].
\end{aligned}
\]
Using
\eqref{eq:rho0-strong-first-derivative-estimates}--%
\eqref{eq:rho0-basic-bounds}, we obtain
\begin{equation}
\label{eq:rho0-second-s-recursion}
\|H_{k+1}\|
\le
\bigl(1+C\theta^{2k}\bigr)\|H_k\|
+
C\theta^k\|K_k\|
+
C\theta^k.
\end{equation}
Here the last term is only of order \(O(\theta^k)\): indeed,
\[
D_s^2a(s_k,w_k)[S_k,S_k]=O(\theta^k),
\]
because \(D_s^2a(s_k,w_k)=O(\|w_k\|)=O(\theta^k)\) whereas
\(\|S_k\|\) is uniformly bounded.

Set
\(
M_k:=1+\max_{0\le j\le k}\|H_j\|.
\)
Iterating \eqref{eq:rho0-second-w-recursion}, and using
\(\widehat q<\theta\) and the monotonicity of \(M_k\), gives
\[
\begin{aligned}
\|K_k\|
\le
\widehat q^{\,k}\|K_0\|
+
C\sum_{j=0}^{k-1}
\widehat q^{\,k-1-j}\theta^j M_j\le
C\theta^k M_k.
\end{aligned}
\]
Substituting this estimate into
\eqref{eq:rho0-second-s-recursion}, we obtain
\[
\begin{aligned}
1+\|H_{k+1}\|
\le
M_k
+
C\theta^{2k}M_k
+
C\theta^k\le
\bigl(1+C\theta^k\bigr)M_k.
\end{aligned}
\]
Consequently,
\(
M_{k+1}\le \bigl(1+C\theta^k\bigr)M_k.
\)
Since
\(
\sum_{k=0}^{\infty}\theta^k<\infty,
\)
it follows that
\[
M_k
\le
M_0\prod_{j=0}^{k-1}(1+C\theta^j)
\le
M_0\exp\left(C\sum_{j=0}^{\infty}\theta^j\right)
\le C.
\]
Returning to the estimate for \(K_k\), we conclude that
\begin{equation}
\label{eq:rho0-second-derivative-bounds}
\|D_z^2w_k\|\le C\theta^k,
\qquad
\sup_k\|D_z^2s_k\|\le C.
\end{equation}}

We next show that the second derivative series of \(s_\infty\) converges
uniformly. By the chain rule,
\[
\begin{aligned}
D_z^2(a(s_k,w_k))
&=
D^2a(s_k,w_k)
        [D_z(s_k,w_k),D_z(s_k,w_k)]  \\
&\quad
+
Da(s_k,w_k)D_z^2(s_k,w_k).
\end{aligned}
\]
Using \eqref{eq:rho0-strong-first-derivative-estimates}--\eqref{eq:rho0-second-derivative-bounds},
each term on the right-hand side is bounded by \(C\theta^k\). Hence
\[
        \|D_z^2(a(s_k,w_k))\|\le C\theta^k,
\]
and therefore
\[
        \sum_{k=0}^{\infty}D_z^2(a(s_k,w_k))
\]
converges uniformly. The first derivative series already converges uniformly by
Theorem~\ref{thm:rho-zero-retraction}. Thus
\[
        s_\infty(s,w)
        =
        s+\sum_{k=0}^{\infty}a(s_k,w_k)
\]
is \(C^2\), and consequently \(\Psi_c(s,w)=(s_\infty(s,w),0)\) is \(C^2\).

It remains to verify the second-order retraction property. From
\eqref{eq:rho0-ab-quadratic}, \eqref{eq:rho0-normal-decay}, and
\eqref{eq:rho0-limit-representation}, we have
\[
        s_\infty-s
        =
        \sum_{k=0}^{\infty}a(s_k,w_k)
        =
        O(\|w_0\|^2).
\]
Since \(s=P_X(z)\) and \(\|w_0\|=\operatorname{dist}(z,X)\), this gives
\begin{equation}
\label{eq:rho0-projection-second-order}
        \Psi_c(z)=P_X(z)+O(\operatorname{dist}(z,X)^2).
\end{equation}
Now take \(z=x+\xi\) with \(x\in X\) and \(\xi\in \blue{T_xX^0}\). Since the intersection is
clean and $M$ is locally $C^4$, the set $X=M\cap E$
is a $C^4$ embedded submanifold near $\bar x$. Hence
\[
        \operatorname{dist}(x+\xi,X)=O(\|\xi\|^2),
\]
and the metric projection retraction satisfies the standard expansion
\[
        P_X(x+\xi)
        =
        \operatorname{Exp}_x(\xi)+O(\|\xi\|^3).
\]
Combining this with \eqref{eq:rho0-projection-second-order}, we obtain
\[
        R_x^c(\xi)
        =
        \Psi_c(x+\xi)
        =
        P_X(x+\xi)+O(\|\xi\|^4)
        =
        \operatorname{Exp}_x(\xi)+O(\|\xi\|^3).
\]
Since \(R^c\) is \(C^2\), this proves that \(R^c\) is a second-order retraction
on \(X\).
\end{proof}

\begin{remark}[Weaker smoothness assumptions]
	The smoothness assumptions in Theorems~\ref{thm:rho-zero-retraction} and \ref{thm:rho-zero-second-order} can be relaxed.
	Following the regularity analysis in \cite{chen2026retractionsalternatingprojections},
	one may replace the $C^3$ and $C^4$ assumptions on $M$ by
	$C^{2,1}$ and $C^{3,1}$, respectively, where $C^{p,1}$ denotes
	$C^p$ smoothness with locally Lipschitz $p$th derivative.
	The corresponding proofs follow the same differentiated-iteration
	argument, with the Lipschitz control providing the summability needed
	for the derivatives of the limiting map.
	We use the stronger $C^3$ and $C^4$ assumptions here only to keep the
	statement and regularity argument simpler.
\end{remark}
\end{document}